\documentclass[11pt]{amsart}

\usepackage{amsmath,amssymb}
\usepackage{graphicx}
\usepackage{tikz}
\usepackage{float}
\usetikzlibrary{calc}
\usepackage{comment}
\usepackage[T1]{fontenc}
\usepackage[utf8]{inputenc}
\usepackage{lmodern}
\usepackage[a4paper,margin=1in]{geometry}
\usepackage{amsmath,amssymb,amsthm,mathtools,mathrsfs}
\usepackage{enumitem}
\usepackage[nopatch=footnote]{microtype}
\usepackage[
  colorlinks=true,
  linkcolor=blue,        
  citecolor=green!60!black, 
  urlcolor=magenta,      
  filecolor=cyan         
]{hyperref}

 \theoremstyle{plain}
\newtheorem{theorem}{Theorem}[section]
\newtheorem{proposition}[theorem]{Proposition}
\newtheorem{lemma}[theorem]{Lemma}
\newtheorem{corollary}[theorem]{Corollary}
\theoremstyle{definition}

\theoremstyle{definition} 
\newtheorem{remark}[theorem]{Remark}
\numberwithin{equation}{section}

\newcommand{\R}{\mathbb R}
\newcommand{\Z}{\mathbb Z}
\newcommand{\T}{\mathbb T}
\newcommand{\E}{\mathbb E}
\newcommand{\1}{\mathbf 1}
\newcommand{\cE}{\mathcal E}

\newcommand{\cA}{\mathcal A}
\newcommand{\cG}{\mathcal G}
\DeclareMathOperator{\supp}{supp}
\DeclareMathOperator{\diam}{diam}
\DeclareMathOperator{\diag}{diag}

\DeclareMathOperator{\dir}{dir}

\newcommand{\norm}[1]{\lVert #1\rVert}
\setlist[enumerate]{leftmargin=2em,itemsep=2pt,topsep=4pt}
\allowdisplaybreaks[1]
\title{Strichartz estimates on two-dimensional irrational tori}

\author{Ming Wang}
\address{School of Mathematics and Statistics, HNP-LAMA,
Central South University, Changsha, Hunan 410083, P.R. China}
\email{m.wang@csu.edu.cn}

\author{Yuanjia Xiong}
\address{School of Mathematics and Statistics, HNP-LAMA,
Central South University, Changsha, Hunan 410083, P.R. China}
\email{xiongyuanjia@csu.edu.cn}

\begin{document}

\begin{abstract}
We prove an \(L^4\) Strichartz estimate for the dispersion \(n_1^2+\alpha n_2^2\) on \(\mathbb T^2\), for every \(\alpha>0\). The loss is controlled by the Shannon entropy of the normalized squared Fourier coefficients and the cardinality of their support \(S\). In particular, we obtain a norm loss of \((\log\#S)^{1/2}\) on fixed time intervals, with constants locally uniform in \(\alpha\). This provides an irrational-dispersion counterpart of the cardinality estimate of Herr and Kwak [Forum Math. Pi (2024)] and improves the \(N^\varepsilon\) loss of Bourgain and Demeter [Ann. of Math. (2015)] to \((\log N)^{1/2}\) on frequency boxes. The proof combines weighted incidence estimates, smooth localization, and an entropy recursion.
\end{abstract}

\maketitle

\setcounter{tocdepth}{2}
\tableofcontents

\section{Introduction}

\subsection{Main results}

Let $\T=\R/(2\pi\Z)$ be equipped with normalized Haar measure and,
for $\alpha>0$, define
\begin{equation}\label{eq:extension}
 E_\alpha f(t,x)
 :=\sum_{n\in\Z^2}f(n)e^{i(n\cdot x+tQ_\alpha(n))},
 \qquad Q_\alpha(n)=n_1^2+\alpha n_2^2.
\end{equation}
For a nonzero finitely supported $f:\Z^2\to\mathbb C$, define
\begin{equation}\label{eq:coefficient-entropy}
 \pi(n)=\frac{|f(n)|^2}{\norm{f}_{\ell^2}^2},
 \qquad
 \mathsf H(f)=\sum_{n\in\supp f}\pi(n)\log\frac1{\pi(n)},
 \qquad
 \norm{f}_{\ell^2}^2=\sum_{n\in\Z^2}|f(n)|^2.
\end{equation}
Here $\mathsf H(f)$ is the Shannon entropy of the normalized squared
coefficients, with $\mathsf H(0)=0$.
In cardinality bounds, we abbreviate $\log(2+\#S)$ by $\log\#S$.

\begin{theorem}\label{thm:main}
Let $I=[\alpha_-,\alpha_+]\Subset(0,\infty)$. There is $C_I<\infty$
such that, for all $\alpha\in I$, finite nonempty $S\subset\Z^2$,
and complex valued $f$ supported in $S$,
\begin{equation}\label{eq:main}
 \|E_\alpha f\|_{L^4([0,2\pi]\times\T^2)}^4
 \le C_I\Bigl(1+\mathsf H(f)\log\#S\Bigr)\|f\|_{\ell^2}^4.
\end{equation}
\end{theorem}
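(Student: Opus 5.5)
Expanding the fourth power and integrating first in $x$ over $\T^2$ gives
\[
\|E_\alpha f\|_{L^4}^4=\int_0^{2\pi}\Bigl|\sum_{m}\Bigl(\sum_{n_1+n_2=m}f(n_1)f(n_2)e^{it(Q_\alpha(n_1)+Q_\alpha(n_2))}\Bigr)\Bigr|^2\,\frac{dt}{2\pi}.
\]
Since $\alpha$ may be irrational, the time integral does not produce exact resonance. I will replace the sharp indicator of $[0,2\pi]$ by a Schwartz weight $\psi(t)$ with $\widehat\psi\ge0$ compactly supported. The quartic form then becomes
\[
\sum_{n_1+n_2=n_3+n_4}f(n_1)f(n_2)\overline{f(n_3)f(n_4)}\,\widehat\psi\bigl(Q_\alpha(n_1)+Q_\alpha(n_2)-Q_\alpha(n_3)-Q_\alpha(n_4)\bigr).
\]
So only near-resonant quadruples contribute, namely those with $|\Delta Q_\alpha|\lesssim1$. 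Writing $n_1=a+u$, $n_2=a-u$, $n_3=a+v$, $n_4=a-v$ (with the usual parity bookkeeping), the condition becomes $|Q_\alpha(u)-Q_\alpha(v)|\lesssim 1$. Geometrically, $n_3$ and $n_4$ lie on a thin annulus of the ellipse $Q_\alpha(\cdot-a)=Q_\alpha(u)$, centred at the midpoint $a$. By Cauchy--Schwarz it suffices to bound
\[
\sum_{a}\ \sum_{Q_\alpha(u)\approx Q_\alpha(v)}|f(a+u)||f(a-u)||f(a+v)||f(a-v)|,
\]
which is a weighted incidence count between pairs $\{n_1,n_2\}$ and thin elliptic annuli through them. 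This is where I will invoke the paper's weighted incidence estimate (presumably stated before this point). That estimate should assert that a point pair lies on $O(1)$ "rich" annuli, with constants locally uniform for $\alpha\in I$, because two thin ellipses of fixed shape intersect in a bounded region.

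Next comes the entropy recursion. Dyadically decompose $f=\sum_k f_k$, where $f_k$ collects the $n$ with $\pi(n)\in(2^{-k-1},2^{-k}]$, and let $S_k$ denote its support, so that $\#S_k\le 2^{k+1}$. Set $p_k=\|f_k\|_2^2/\|f\|_2^2$. For a single level set, where $f$ has essentially constant modulus, the problem reduces to a cardinality bound of Herr--Kwak type:
\[
\|E_\alpha f_k\|_4^4\lesssim \log(\#S_k)\,\|f_k\|_2^4\lesssim k\,\|f_k\|_2^4 .
\]
This bound should follow from the incidence estimate by counting near-resonant quadruples in a set of size $M$, which I expect to be $\lesssim M^2\log M$. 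For the cross terms between levels $j<k$, I will use the weighted bilinear incidence bound,
\[
\|E f_jE f_k\|_2^2\lesssim (1+\log\#S)\,\|f_j\|_2^2\|f_k\|_2^2,
\]
and sum it with weights. Organised as a recursion, removing the top level contributes $p_k\log(1/p_k)\cdot\log\#S$ plus lower-order terms. Summing over $k$ then yields $\sum_k p_k k\lesssim 1+\mathsf H(f)$ multiplied by $\log\#S$. This is the right shape, because $\sum_k p_k k\le \mathsf H(f)+O(1)$ by the definition of the levels.

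The main obstacle is making the cross-term summation cost only $\mathsf H(f)\log\#S$ rather than $(\log\#S)^2$. A naive triangle inequality over the levels loses a factor equal to the number of levels. To avoid this I will try first an induction on $\#\mathrm{levels}$ with the Bellman-type inequality
\[
B(f)\le B(f-f_{top})+C\bigl(p_{top}\log\tfrac{e}{p_{top}}\bigr)\log\#S,
\]
where $B(f)=\|Ef\|_4^4/\|f\|_2^4$. Proving this inequality requires that the bilinear interaction between $f_{top}$ and the rest is bounded by $\sqrt{p_{top}}$ times a quantity controlled by the incidence estimate with only a logarithmic loss. Securing that gain of $\sqrt{p_{top}}$ in the cross term is where I expect the real difficulty to lie.
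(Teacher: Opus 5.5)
\textbf{There is a genuine gap.} Your reduction to a positive time weight is the same as the paper's. Lemma~\ref{lem:time-window} uses a Fej\'er-type $W\ge c_I$ on $[0,2\pi]$ with $\widehat W$ supported in $[-2\eta,2\eta]$, and your diagonal condition $|Q_\alpha(u)-Q_\alpha(v)|\lesssim1$ is equivalent to the paper's side condition $|B_\alpha(u,v)|<\eta$. Two small slips: after integrating in $x$, the sum over $m$ sits outside the modulus squared; and passing to $|f|$ uses $\widehat\psi\ge0$, not Cauchy--Schwarz.

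Beyond that, the argument rests on estimates that are not available and are stronger than the theorem itself.
\begin{itemize}
\item For a level set $f_k$ of essentially constant modulus, $\mathsf H(f_k)\approx\log\#S_k$. So Theorem~\ref{thm:main} gives only $\|E_\alpha f_k\|_4^4\lesssim(\log\#S_k)^2\|f_k\|_2^4$. Your bound with a single $\log\#S_k$ is the irrational analogue of the sharp Herr--Kwak estimate, which is not known here; the paper records the optimal irrational loss as open.
\item Your bilinear bound with loss $1+\log\#S$ is equally strong. The level sets can be arbitrary disjoint sets, so averaging it over random splittings of a set $S$ would give the sharp linear bound for $\1_S$.
\item Your Bellman inequality with a single level ($p_{top}=1$) already asserts $B(f)\lesssim\log\#S$.
\item The incidence estimate you plan to invoke (``a point pair lies on $O(1)$ rich annuli'') does not exist here, and bounded multiplicity cannot be the whole story. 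For irrational $\alpha$, a fixed direction at scale $\rho$ can be nearly orthogonal to $C_I(1+R/\rho)$ directions at scale $R$ (Lemma~\ref{lem:fixed-scale-directions}). So matched-direction Szemer\'edi--Trotter counting gives only $C_Is\log\#S\,J(f)$ in aspect-ratio class $s$ (Proposition~\ref{prop:aspect}), and this diverges when summed over $s$.
\end{itemize}
The real obstruction is thin parallelograms, and dyadic level sets of $|f|$ carry no information about them. That is why you find no mechanism for the $\sqrt{p_{top}}$ gain.

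The missing idea is that the entropy should come from geometric localization of thin parallelograms, not from level sets. In the paper, moderate aspect ratios $\sigma(u,v)\le\tau$ cost one factor $\log\#S$ via weighted incidences. Thin parallelograms are recovered as follows.
\begin{itemize}
\item Their two short sides are localized in smooth windows of radius $r=R/L$ around separated centers $z,w$ with $R\le|w-z|<2R$, and integrated against $dR/R$. This reproduces exactly the constant $\kappa_L$ of the mass identity (Lemmas~\ref{lem:exact-mass} and~\ref{lem:soft-coverage}), up to an $O(L^2/s^2)$ error that is summable by Corollary~\ref{cor:aspect-sums}.
\item A positive-kernel Cauchy--Schwarz, combined with the angular exclusion $\chi_{z,w}(u)+\chi_{z,w}(h)\le1$, bounds the mixed term by local self-energies. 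This yields $G(f)\le(\mathscr LG)(f)+C_I\log\#S\,J(f)$ with coefficient exactly one.
\item Each localization splits the mass into two disjoint windows, so Shannon entropy gains $\log2$ per unit of $J$: $\mathscr H-\mathscr L\mathscr H\ge(\log2)J$.
\item Telescoping over generations, which terminate because supports strictly shrink, gives $G(f)\le C_I\log\#S\,\mu(f)^2\mathsf H(f)$.
\end{itemize}
This is how the product $\mathsf H(f)\log\#S$ arises, with no sharp single-level or bilinear estimate needed.
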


Concavity of the logarithm gives
\begin{equation}\label{eq:entropy-cardinality}
 0\le\mathsf H(f)\le\log \#(\supp f)\le\log\#S
 \qquad(f\ne0,\ \supp f\subset S).
\end{equation}
Hence the following bounds hold.

\begin{corollary}\label{cor:cardinality}
Under the assumptions of Theorem~\ref{thm:main},
\begin{equation}\label{eq:cardinality}
 \|E_\alpha f\|_{L^4([0,2\pi]\times\T^2)}^4
 \le C_I(\log\#S)^2\|f\|_{\ell^2}^4.
\end{equation}
In particular, if $S_N=[-N,N]^2\cap\Z^2$, $N\ge2$, and
$\supp f\subset S_N$, then
\begin{equation}\label{eq:intro-box-bound}
 \|E_\alpha f\|_{L^4([0,2\pi]\times\T^2)}^4
 \le C_I(\log N)^2\|f\|_{\ell^2}^4.
\end{equation}
\end{corollary}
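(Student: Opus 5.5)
The corollary follows from Theorem~\ref{thm:main} together with the entropy bound \eqref{eq:entropy-cardinality}; I would argue in three short steps.

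\emph{Step 1: reduce \eqref{eq:cardinality} to \eqref{eq:main}.} If $f=0$ both sides vanish, so assume $f\neq0$. Since $\supp f\subset S$, \eqref{eq:entropy-cardinality} gives $\mathsf H(f)\le\log\#S$. Hence $1+\mathsf H(f)\log\#S\le 1+(\log\#S)^2$.

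\emph{Step 2: absorb the constant term.} With the convention $\log\#S=\log(2+\#S)$, we have $\log\#S\ge\log 3>1$ for every nonempty $S$. Therefore $1\le(\log\#S)^2$, and
\[
 1+(\log\#S)^2\le 2(\log\#S)^2 .
\]
Inserting this into \eqref{eq:main} yields \eqref{eq:cardinality}, with $C_I$ replaced by $2C_I$. One may rename this constant, since the $C_I$ of the corollary only needs to depend on $I$.

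\emph{Step 3: the box bound.} Take $S=S_N$, so $\#S_N=(2N+1)^2$. For $N\ge2$,
\[
 \log(2+(2N+1)^2)\le \log(9N^2)\le \log(N^{6})=6\log N,
\]
where the first inequality uses $2+(2N+1)^2=4N^2+4N+3\le 9N^2$ for $N\ge2$, and the second uses $9\le N^4$, which holds since $N^4\ge16$. Thus $(\log\#S_N)^2\le 36(\log N)^2$, and \eqref{eq:intro-box-bound} follows after enlarging $C_I$ once more.

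There is no real obstacle: all the analytic content sits in Theorem~\ref{thm:main}. The only points needing care are absorbing the additive $1$ into the logarithm and tracking the harmless changes of the constant $C_I$.
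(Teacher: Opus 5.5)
Your proposal is correct and follows the paper's own deduction: combine \eqref{eq:main} with $\mathsf H(f)\le\log\#S$ from \eqref{eq:entropy-cardinality}, absorb the additive $1$ using $\log(2+\#S)\ge\log3>1$, and use $\log\#S_N\lesssim\log N$. You also spell out the constant bookkeeping that the paper leaves implicit; for non-integer $N$ one has $\#S_N\le(2N+1)^2$ rather than equality, which your estimate already covers.
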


In Schr\"odinger form, set
$\Delta_\alpha=\partial_{x_1}^2+\alpha\partial_{x_2}^2$.
By conjugation and Parseval's identity,
Corollary~\ref{cor:cardinality} yields the following estimate
for $\alpha\in I$ and $g$ with $\supp\widehat g\subset S$,
where $S\subset\Z^2$ is finite:\footnote{Here and below, the notation $A\lesssim B$ means $A\le CB$ for some
absolute constant $C>0$, while $A\lesssim_{c_1,c_2,\cdots} B$ allows
the constant $C$ to depend on $c_1,c_2,\cdots$.}
\begin{equation}\label{eq:strichartz-cardinality}
 \|e^{it\Delta_\alpha}g\|_{L^4([0,2\pi]\times\T^2)}
 \lesssim_I (\log\#S)^{1/2}\|g\|_{L^2(\T^2)}.
\end{equation}
In particular, if $P_{\le N}$ denotes the Fourier projection onto
$S_N$, then for $g\in L^2(\T^2)$ and $N\ge2$,
\begin{equation}\label{eq:strichartz-box}
 \|e^{it\Delta_\alpha}P_{\le N}g\|_{L^4([0,2\pi]\times\T^2)}
 \lesssim_I(\log N)^{1/2}\|g\|_{L^2(\T^2)}.
\end{equation}


Herr and Kwak~\cite{HerrKwak} proved the sharp estimate
\begin{equation}\label{eq:intro-herr-kwak}
 \|E_1f\|_{L^4([0,2\pi]\times\T^2)}
 \lesssim(\log\#S)^{1/4}\|f\|_{\ell^2}
\end{equation}
on the standard torus ($\alpha=1$) for $f$ supported on $S\subset\Z^2$.
We adapt their approach to irrational dispersion, obtaining a bound
in terms of coefficient entropy and support cardinality.
On frequency boxes, this improves the Bourgain--Demeter~\cite{BourgainDemeter}
loss from $N^\varepsilon$ to $(\log N)^{1/2}$.
The optimal loss for irrational dispersion remains open.

\begin{remark}[General positive diagonal symbols]\label{rem:general-symbol}
For $\alpha,\beta>0$, define
\[
 E_{\alpha,\beta}f(t,x)
 =\sum_{n\in\Z^2}f(n)e^{i(n\cdot x+t(\alpha n_1^2+\beta n_2^2))}.
\]
For every fixed $T>0$ and complex coefficients $f$ supported in
a finite nonempty $S\subset\Z^2$,
\begin{equation}\label{eq:general-symbol-entropy}
 \|E_{\alpha,\beta}f\|_{L^4([0,T]\times\T^2)}^4
 \le C_{K,T}\bigl(1+\mathsf H(f)\log\#S\bigr)\|f\|_{\ell^2}^4,
 \qquad (\alpha,\beta)\in K\Subset(0,\infty)^2.
\end{equation}
This follows from Theorem~\ref{thm:main} by the time rescaling
$E_{\alpha,\beta}f(t,x)=E_{\beta/\alpha}f(\alpha t,x)$
and a finite covering of $[0,\alpha T]$ by translated intervals of
length $2\pi$. 
\end{remark}

\subsection{Background and related developments}

Bourgain~\cite{Bourgain} introduced periodic Fourier restriction
to study NLS, proving the critical $N^\varepsilon$ and scale-invariant
supercritical bounds on the standard $\T^2$. Herr--Kwak's sharp
endpoint is recalled in \eqref{eq:intro-herr-kwak}.
Liang, Shen, and Wang~\cite{LiangShenWang} extended the sharp
$(\log N)^{1/4}$ norm loss to frequency boxes for positive integral
diagonal symbols. Clearing denominators and rescaling time covers
fixed positive rational dispersion ratios, but not irrational ones.

Bourgain~\cite{BourgainIrrational} began the irrational-torus theory
in dimension three. For two-dimensional irrational tori,
Guo, Oh, and Wang~\cite{GuoOhWang} obtained scale-invariant
estimates in the supercritical range $p>20/3$.
Killip--Vi\c{s}an~\cite{KillipVisan} subsequently proved such
estimates for all $p>4$ on every rectangular torus.

At the critical exponent $p=4$, the problem is to determine
the size of the frequency loss.
Catoire--Wang~\cite{CatoireWang}, Demirbas~\cite{Demirbas}, and
Demeter~\cite{DemeterIncidence} successively improved the endpoint
$L^4$ bounds. Bourgain--Demeter decoupling~\cite{BourgainDemeter}
gives, for every $\alpha>0$ and $\varepsilon>0$,
\begin{equation}\label{eq:intro-decoupling}
 \|E_\alpha f\|_{L^4([0,2\pi]\times\T^2)}
 \lesssim_{\alpha,\varepsilon}N^\varepsilon\|f\|_{\ell^2},
 \qquad \supp f\subset S_N.
\end{equation}
The general irrational $L^4$ norm losses thus progress as
\begin{equation}\label{eq:intro-irrational-history}
 \underset{\text{\cite{CatoireWang}}}{N^{1/6}}
 \ \longrightarrow\
 \underset{\text{\cite{Demirbas}}}{N^{131/832+\varepsilon}}
 \ \longrightarrow\
 \underset{\text{\cite{DemeterIncidence}}}{N^{1/8+\varepsilon}}
 \ \longrightarrow\
 \underset{\text{\cite{BourgainDemeter}}}{N^\varepsilon}
 \ \longrightarrow\
 \underbrace{(\log N)^{1/2}}_{\substack{\text{present work}\\
                                      \text{Corollary~\ref{cor:cardinality}}}}.
\end{equation}

These upper bounds are complemented by an obstruction to a
lossless endpoint estimate.
Kishimoto~\cite[Theorem~1.4]{Kishimoto} ruled out a lossless strong
$L^4$ estimate on every fixed positive-length interval for almost
every positive irrational $\alpha$.

A related question concerns uniform estimates below the critical
exponent. Bourgain's uniform subcritical conjecture on the standard
$\T^d$~\cite[(3.2)]{Bourgain} predicts that, for $g\in L^2(\T^d)$,
\begin{equation}\label{eq:intro-subcritical-conjecture}
 \|e^{it\Delta}P_{\le N}g\|_{L^p([0,2\pi]\times\T^d)}
 \lesssim_{d,p}\|g\|_{L^2(\T^d)},
 \qquad 2<p<\frac{2(d+2)}d.
\end{equation}
Here $\Delta=\sum_{j=1}^d\partial_{x_j}^2$ and $P_{\le N}$
projects onto $[-N,N]^d\cap\Z^d$; the constant is independent of $N$.

Recent progress in dimension one connects this conjecture with
sharp endpoint estimates in terms of support cardinality.
For the integer parabola, Deng, Fan, Guo, Guo, and
Luo~\cite{DengFanGuoGuoLuo} recently proved, for finite nonempty
$S\subset\Z$ and complex coefficients $a$, 
\begin{align}
 \|\sum_{n\in S}a(n)e^{i(nx+n^2t)}\|_{L^{6,\infty}([0,2\pi]\times\T)}
 &\lesssim\|a\|_{\ell^2},\label{eq:intro-one-dimensional-uniform}\\
 \|\sum_{n\in S}a(n)e^{i(nx+n^2t)}\|_{L^6([0,2\pi]\times\T)}
 &\lesssim(\log\#S)^{1/6}\|a\|_{\ell^2}.\label{eq:intro-one-dimensional-L6}
\end{align}
Their proof of \eqref{eq:intro-one-dimensional-uniform}
refines the $p$-adic bilinearization and high-low arguments
of Guo, Li, and Yung~\cite{GuoLiYung}.
The uniform weak $L^6$ bound
\eqref{eq:intro-one-dimensional-uniform}, interpolated with the
$L^2$ bound, settles the conjecture for $d=1$.
The strong endpoint bound \eqref{eq:intro-one-dimensional-L6}
is sharp.
Li and Yung~\cite{LiYung} gave another proof of
\eqref{eq:intro-one-dimensional-L6} by refining the high-low
method of Guth, Maldague, and Wang~\cite{GuthMaldagueWang}.
The full conjectured subcritical range remains open in higher
dimensions.

These questions also have applications to nonlinear equations
and control theory. Strichartz estimates underlie the Cauchy theory
and Sobolev norm growth for nonlinear Schr\"odinger equations on
tori~\cite{Bourgain,CatoireWang,Demirbas}. In one dimension, Skouloudis and Yu~\cite{SkouloudisYu}
combine short-time $L^6$ Strichartz estimates with the $I$-method
to prove small-mass global well-posedness of the periodic
quintic NLS in $H^s(\T)$, $s>0$.
On the standard $\T^2$, Herr--Kwak~\cite{HerrKwak}
combine short-time $L^4$ and multilinear estimates to prove
small-mass global well-posedness of the cubic NLS in
$H^s(\T^2)$, $s>0$.
In another breakthrough, their later
work~\cite{HerrKwakGlobal} uses a new inverse Strichartz estimate
to establish global well-posedness of the defocusing cubic NLS
in $H^s(\T^2)$, $s>0$, for arbitrary mass.

The conjectured lossless subcritical bounds provide a further
motivation from control theory. By the criterion of Burq and
Zhu~\cite[Theorems~1.8 and~1.11]{BurqZhu}, a lossless $L^p$ bound
with $p>2$ on the standard torus implies observability from
every positive-measure space-time set.
For real-part controls, G. Wang and M.
Wang~\cite[Theorem~1.2]{WangWang} characterize the null-controllable
subspace at time $2\pi$ for shifted free equations on admissible
measurable product sets.

\subsection{Overview of the proof}

The central difficulty is to control near resonances on a fixed time
interval. For an irrational coefficient, time integration does not
reduce the fourth moment to a count of exact resonances. After
spatial frequency conservation is imposed, the relevant interaction
is governed by
$B_\alpha(u,v)=u_1v_1+\alpha u_2v_2$.
A positive time weight reduces the problem to parallelograms with
$|B_\alpha(u,v)|<\eta$, where $\eta=\eta(I)>0$ is small. In the coordinates
$T_\alpha=\diag(1,\sqrt\alpha)$, their nonzero side vectors are nearly
orthogonal. 

Our proof combines weighted incidence estimates with smooth
localization. Two features are essential to the final bound:
the incidence summation depends on the number of supported points
rather than their diameter, and the localization recursion preserves
the coefficient of the recursive term exactly. The latter allows
entropy dissipation to control the total error over all generations.

For a nonnegative finitely supported $f$, write
$\mu(f)=\|f\|_{\ell^2}^2$ and let $K(s)=(1-|s|/\eta)_+$. The discrete
estimate underlying the proof concerns the weighted energy
\begin{equation}\label{eq:energy}
 \cE_\alpha(f)
 :=\sum_{x,u,v\in\Z^2}
 f(x)f(x+u)f(x+v)f(x+u+v)K(B_\alpha(u,v)).
\end{equation}


\begin{proposition}\label{prop:energy}
For $\alpha\in I$ and $f\ge0$ with finite support $S=\supp f$,
\begin{equation}\label{eq:energy-bound}
 \cE_\alpha(f)
 \le\mu(f)^2+C_I\mu(f)^2\mathsf H(f)\log\#S.
\end{equation}
\end{proposition}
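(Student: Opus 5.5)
Split $\cE_\alpha(f)$ according to whether the parallelogram is degenerate. Since $K\le1$ and $K(0)=1$, the terms with $u=0$ or $v=0$ contribute exactly
\[
2\sum_{x,v}f(x)^2f(x+v)^2-\sum_x f(x)^4 = 2\mu(f)^2-\|f\|_{\ell^4}^4 .
\]
This already exceeds $\mu(f)^2$. The leading constant $1$ in \eqref{eq:energy-bound} therefore cannot come from a naive split. I would first check whether the degenerate diagonal terms can be absorbed: the excess $\mu^2-\|f\|_4^4$ is bounded by $\mu^2\mathsf H(f)$ (since $1-\sum\pi^2\le \sum\pi\log(1/\pi)$). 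So the target becomes the bound $C_I\mu^2\mathsf H\log\#S$ on the nondegenerate part $u,v\ne0$ with $|B_\alpha(u,v)|<\eta$, plus $\mathsf H\le \mathsf H\log\#S$ when $\#S\ge2$. The case $\#S=1$ is trivial, since then $\cE=\mu^2$.

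For the nondegenerate part, pass to $T_\alpha$ coordinates. There $u,v$ are nearly orthogonal nonzero vectors, so the four vertices $x,x+u,x+v,x+u+v$ form a near-rectangle. The key incidence statement I would aim for is as follows. For fixed points $p=x$ and $q=x+u+v$ (a diagonal of the quadrilateral), the vertex $x+u$ lies in a thin annulus around the circle with diameter $pq$; its width is $\lesssim\eta/|p-q|$ times an $I$-dependent constant. By Cauchy–Schwarz, the nondegenerate sum is at most
\[
\sum_{p,q}f(p)f(q)\sum_{r\in A(p,q)}f(r)f(p+q-r),
\]
where $A(p,q)$ is that annulus intersected with the support. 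The weighted incidence estimate to prove is that this sum is $\lesssim_I \mu^2\log\#S$ in general. The log must come from the number of supported points, not the diameter: a dyadic decomposition in $|u|$ and $|v|$ produces $\log(\text{diam})$ scales, and these must be replaced by sorting the interactions by rank among support points.

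The entropy factor needs a recursion via smooth localization. Decompose $f=\sum_j f\phi_j$ with a smooth partition of unity $\sum_j\phi_j^2=1$ at some scale, adapted so that each piece carries mass $\mu_j$. Parallelograms with all vertices in the same cell give $\sum_j\cE_\alpha(f\phi_j)$, with coefficient exactly $1$ (this is the "coefficient of the recursive term preserved exactly"). Cross-cell interactions are bounded by the incidence estimate, giving $C\mu^2\log\#S\cdot(1-\sum_j(\mu_j/\mu)^2)$, or more precisely by the entropy drop $\mathsf H(\mu_\cdot/\mu)$. Iterating and using the chain rule $\mathsf H(f)=\mathsf H(\text{cells})+\sum_j(\mu_j/\mu)\mathsf H(f\phi_j)$, the error terms telescope to $C\mu^2\mathsf H(f)\log\#S$, with base case single points contributing $\mu_j^2$, summing to at most $\mu^2$.

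I expect the main obstacle to be the cross-term bound with the entropy weight: showing that interactions between distinct cells cost only $\mu^2\log\#S$ times the split entropy, uniformly over generations. The partition must split the mass roughly in half each time (e.g. a median cut along a line), so that the entropy drop is comparable to the cross mass $\sum_{j\ne k}\mu_j\mu_k/\mu^2$. I would try first a binary splitting by a line, bounding the crossing energy by the incidence estimate restricted to pairs on opposite sides.
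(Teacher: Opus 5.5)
Your treatment of the degenerate terms is correct and matches the paper: they give $2\mu^2-\nu$, so the excess over $\mu^2$ is $J=\mu^2-\nu\le\mu^2\mathsf H$, i.e.\ $G=\cE_\alpha^*+J$.

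The gap is the incidence claim that everything else rests on. You propose to prove $\cE_\alpha^*(f)\lesssim_I\mu(f)^2\log\#S$ for all $f$. That would give $\|E_\alpha f\|_{L^4}^4\lesssim_I(\log\#S)\|f\|_{\ell^2}^4$, a $(\log\#S)^{1/4}$ norm loss for irrational $\alpha$, strictly better than Corollary~\ref{cor:cardinality}. The paper leaves the optimal irrational loss open, and annuli plus rank sorting supply no mechanism for it.

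The obstruction is that near-orthogonality does not pair directions. Lemma~\ref{lem:fixed-scale-directions} only bounds the degree of the direction graph at side lengths $\rho\le R$ by $C_I(1+R/\rho)$. This size is typically attained, because a short direction is $\eta$-orthogonal to every long direction in an angular window of width $\sim\eta/(R\rho)$. Splitting into matchings and running the Herr--Kwak rank-block argument therefore gives only $\mathcal R_s(f)\le C_Is\log\#S\,J(f)$ at aspect ratio $s$ (Proposition~\ref{prop:aspect}). Incidence thus controls $\sigma\le\tau$ only, and thin parallelograms need a different argument. That, not the leading constant, is why entropy appears.

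Your cross-cell step, ``bounded by the incidence estimate'', inherits exactly this gap. The energy crossing a cell boundary or a median line contains thin parallelograms of every aspect ratio, so it is no easier than $\cE_\alpha^*$ itself. Note also that a uniform bound in terms of $J(f)$, which is what summing over distinct long pairs produces, would make any recursion unnecessary, since $J\le\mu^2\mathsf H$.

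The recursion as sketched does not close either. With $\sum_j\phi_j^2=1$ you do not have $f=\sum_jf\phi_j$. With sharp cells, the cross terms include unbalanced patterns, e.g.\ three vertices in a cell of mass $\mu_1$ and one in a cell of mass $\mu_2\ll\mu_1$. H\"older bounds such as Lemma~\ref{lem:graph-holder} give $\mu_1^{3/2}\mu_2^{1/2}$ for these, up to logarithms. That is comparable neither to $\mu_1\mu_2$ nor to the entropy drop $\approx\mu\mu_2\log(\mu/\mu_2)$, so the errors do not telescope. A fixed-scale partition also cannot adapt to the size of each parallelogram.

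The paper avoids both problems as follows:
\begin{itemize}
\item It localizes only thin parallelograms. It uses smooth windows of radius $R/L$ at the two ends of the long side, averaged over centers with $R\le|w-z|<2R$ and over $R$ with $dR/R$. Then only the balanced pattern $(f_z,f_z,f_w,f_w)$ occurs.
\item Positive definiteness of $v\mapsto K(B(u,v))$, weighted AM--GM and symmetry in $(z,w)$ bound the mixed term by $\frac{\mu_w}{\mu_z}$ times a local Gram sum of $f_z$ over short sides in $\Gamma_{z,w}$.
\item The angular exclusion turns this into $\frac{\mu_w}{\mu_z}(\cE_\alpha(f_z)-\nu_z)$ (Lemma~\ref{lem:localized-gram}). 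The exclusion says two nonzero $u,h$ with $K(B(u,h))>0$ cannot both lie in $\Gamma_{z,w}$.
\item The exact normalization $\kappa_L$ of Lemma~\ref{lem:exact-mass} then makes the coefficient of the recursive term exactly one.
\item Smoothness makes the coverage defect second order, $O(L^2/s^2)$ (Lemma~\ref{lem:soft-coverage}). This beats the incidence loss $s$ and is summed by \eqref{eq:aspect-error}. A sharp cutoff gives a first-order defect whose sum over dyadic aspect ratios is not bounded in terms of $\#S$.
\item The result is $G\le\mathscr LG+C_I\log\#S\,J$. The errors of all generations are summed by $\mathscr H-\mathscr L\mathscr H\ge(\log2)J$ (Lemma~\ref{lem:entropy-dissipation}). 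The iteration terminates because effective children have strictly smaller support.
\end{itemize}
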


A positive time weight converts Proposition~\ref{prop:energy} into
Theorem~\ref{thm:main}. To prove the proposition, a weighted incidence
argument first bounds parallelograms in each aspect-ratio range.
Direction graphs reduce the estimate to matched pairs of directions,
where fixing a diagonal and one side direction uniquely completes
the parallelogram. Summation over actual pairs of distinct points
prevents the appearance of a logarithm depending on the diameter of
the support.

Thin parallelograms are then localized near the two endpoints of
their long side. The total mass of separated localization centers is
exactly $\kappa_LJ(f)$, where $\kappa_L$ is a normalization constant,
and the same $\kappa_L$ occurs in the coverage estimate. A
positive-kernel Cauchy--Schwarz estimate and an angular exclusion
therefore give a recursion with coefficient exactly one. A larger
coefficient would be amplified at successive generations of the
iteration. Here $J(f)=\sum_{x\ne y}f(x)^2f(y)^2$.
Writing $G(f)=\cE_\alpha(f)-\mu(f)^2$ and
$\mathscr H(f)=\mu(f)^2\mathsf H(f)$, the positive localization
operator $\mathscr L$, constructed in Subsection~\ref{sec:recursion},
satisfies
\[
 G(f)\le(\mathscr LG)(f)+C_I\log\#S\,J(f),
 \qquad
 \mathscr H(f)-(\mathscr L\mathscr H)(f)\ge(\log2)J(f).
\]
Iteration of the first estimate and telescoping in the second give
$G(f)\le C_I\log\#S\,\mathscr H(f)$. The iteration terminates
because every effective local support is a proper subset of its
parent support. Thus incidence counting supplies one logarithmic
factor, while entropy controls the accumulated localization error.

\subsection{Acknowledgments}
This work was partially supported by the National Natural Science Foundation of China (No.~12571260), the Natural Science Foundation of Hunan Province
(No.~2026JJ20012) and the Hunan Basic Science Research Center for Mathematical Analysis (2024JC2002).

During the preparation of this work, the authors used ChatGPT as an interactive discussion tool, with continuous guidance from the authors throughout the research process. The content was thoroughly rewritten, reviewed, and verified by the authors, who take full responsibility for the final manuscript.

\section{Proof of the main theorem}
\label{sec:extension}

We first prove Theorem~\ref{thm:main} assuming
Proposition~\ref{prop:energy}. We then introduce the localization
operator and state the key estimates used to prove that proposition.

Fix $I=[\alpha_-,\alpha_+]\Subset(0,\infty)$ and put
\begin{equation}\label{eq:lattice-parameters}
 T_\alpha=\diag(1,\sqrt\alpha),\qquad
 B_\alpha(u,v)=T_\alpha u\cdot T_\alpha v
             =u_1v_1+\alpha u_2v_2,\qquad
 \lambda=\min\{1,\sqrt{\alpha_-}\}.
\end{equation}
Here $\diag(1,\sqrt\alpha)$ is the diagonal matrix with the stated
entries. We call the plane containing $T_\alpha\Z^2$ the
\emph{transformed plane}. We use Euclidean lengths and angles in the transformed plane, where $Q_\alpha(u)=|T_\alpha u|^2$.
Moreover, $|T_\alpha u|\ge\lambda|u|$, so every nonzero transformed
lattice vector has length at least $\lambda$.

Choose once and for all, as a function of $I$,
\begin{equation}\label{eq:eta}
 \eta=10^{-6}\min\{1,\alpha_-,\sqrt{\alpha_-}\},
 \qquad K(s)=\left(1-\frac{|s|}{\eta}\right)_+,
\end{equation}
where $t_+=\max\{t,0\}$.
Thus $K$ is even, $0\le K\le1$, $K(0)=1$, and $K(s)=0$ for
$|s|\ge\eta$. In particular, a term with positive weight
$K(B_\alpha(u,v))$ has $|B_\alpha(u,v)|<\eta$.
For $u,v\ne0$ this means that the transformed side vectors are
nearly orthogonal, in the precise sense
\[
 \frac{|T_\alpha u\cdot T_\alpha v|}
 {|T_\alpha u|\,|T_\alpha v|}
 <\frac{\eta}{|T_\alpha u|\,|T_\alpha v|}\leq 10^{-6}.
\]

For a finitely supported function $f:\Z^2\to[0,\infty)$, define
\begin{align}\label{eq:mass-functionals}
 \mu(f)&=\sum_x f(x)^2=\norm{f}_{\ell^2}^2,
 &\nu(f)&=\sum_x f(x)^4,\notag\\
 J(f)&=\mu(f)^2-\nu(f)
      =\sum_{x\ne y}f(x)^2f(y)^2.
\end{align}

The quantity $J(f)$ is the mass of the ordered pairs of distinct
points, with each point given weight $f(x)^2$. The energy $\cE_\alpha$ defined in \eqref{eq:energy} is a
nonnegative quartic counting functional, it satisfies 
$\cE_\alpha(cf)=c^4\cE_\alpha(f)$ for $c\ge0$.

Let $\cE_\alpha^*(f)$ denote the sum in \eqref{eq:energy} restricted
to $u,v\ne0$. Then we have the decomposition
\begin{equation}\label{eq:zero-edges}
 \cE_\alpha(f)=\cE_\alpha^*(f)+2\mu(f)^2-\nu(f).
\end{equation}
Indeed, the case $u=0$ and $v=0$ each contribute $\mu(f)^2$, with overlap
$\nu(f)$.

If $u,v\ne0$ are parallel after applying $T_\alpha$, then
$|B_\alpha(u,v)|=|T_\alpha u|\,|T_\alpha v|\ge\lambda^2>\eta$.
Consequently every positive-weight term of $\cE_\alpha^*(f)$ is a
nondegenerate parallelogram.

For the rest of the paper, $I$, $\lambda$, $\eta$, and $K$ are fixed
as above. When $\alpha$ is fixed, we abbreviate $T_\alpha$ and
$B_\alpha$ by $T$ and $B$. In the purely discrete sections,
$x,y,u,v$ denote lattice points or lattice vectors; the transformed
points are $Tx,Ty,Tu,Tv$.

\subsection{Reduction of Theorem~\ref{thm:main} to Proposition~\ref{prop:energy}}
\begin{lemma}\label{lem:time-window}
Let
\begin{equation}\label{eq:time-weight}
 W(t)=\frac{\eta}{\pi}
       \left(\frac{\sin(\eta t)}{\eta t}\right)^2,
 \qquad W(0)=\frac{\eta}{\pi}.
\end{equation}
Then $W\ge0$, $\int_\R W(t)\,dt=1$, and
\begin{equation}\label{eq:Fourier-window}
 \int_\R W(t)e^{2its}\,dt=K(s).
\end{equation}
Moreover, $W(t)\ge c_I>0$ for $0\le t\le2\pi$.
For every finitely supported complex valued $f$,
\begin{align}\label{eq:moment-identity}
 &\int_\R\int_{\T^2}W(t)|E_\alpha f(t,x)|^4\,dx\,dt\notag\\
 &\quad=
 \sum_{n,u,v\in\Z^2}
 f(n)\overline{f(n+u)}\,\overline{f(n+v)}f(n+u+v)
 K(B_\alpha(u,v)).
\end{align}
\end{lemma}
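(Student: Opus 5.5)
The proof has three parts: compute the Fourier transform of $W$ from the Fejér kernel, bound $W$ from below on $[0,2\pi]$, and then get the moment identity by expanding $|E_\alpha f|^4$.

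\emph{Fourier transform and normalization.} The triangle function $K(s)=(1-|s|/\eta)_+$ is the Fourier transform of a squared sinc. Concretely, I compute $\int_\R K(s)e^{-2its}\,ds$ directly by splitting at $s=0$. This gives
\[
\int_\R K(s)e^{-2its}\,ds=\eta\Bigl(\frac{\sin(\eta t)}{\eta t}\Bigr)^2 .
\]
By Fourier inversion with the frequency variable $2t$,
\[
K(s)=\frac{1}{\pi}\int_\R \eta\Bigl(\frac{\sin \eta t}{\eta t}\Bigr)^2 e^{2its}\,dt=\int_\R W(t)e^{2its}\,dt,
\]
which is \eqref{eq:Fourier-window}. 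The factor $1/\pi$ comes from $d(2t)/(2\pi)$. Inversion is legitimate because $K$ is continuous and both $K$ and $W$ are integrable. Setting $s=0$ gives $\int W=K(0)=1$, and $W\ge0$ is clear from the formula.

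\emph{Lower bound.} For $0\le t\le 2\pi$ we have $\eta t\le 2\pi\cdot 10^{-6}<\pi/2$. Since $\sin x/x\ge 2/\pi$ on $[0,\pi/2]$, we get $W(t)\ge \frac{\eta}{\pi}\cdot\frac{4}{\pi^2}=:c_I$. This is positive and depends only on $I$ through $\eta$.

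\emph{Moment identity.} Write $|E_\alpha f|^4=E\,\overline{E}\,\overline{E}\,E$ and expand as a finite sum over $n_1,n_2,n_3,n_4$ of
\[
f(n_1)\overline{f(n_2)}\,\overline{f(n_3)}f(n_4)\,e^{i(n_1-n_2-n_3+n_4)\cdot x}\,e^{it(Q(n_1)-Q(n_2)-Q(n_3)+Q(n_4))}.
\]
Integration in $x$ over $\T^2$ with normalized measure forces $n_1+n_4=n_2+n_3$. Parametrize $n_1=n$, $n_2=n+u$, $n_3=n+v$, $n_4=n+u+v$. Expanding $Q_\alpha$ as a quadratic form gives
\[
Q(n)-Q(n+u)-Q(n+v)+Q(n+u+v)=2B_\alpha(u,v).
\]
Since the sum is finite, it can be interchanged with the $t$-integral, which then equals $\int W(t)e^{2itB_\alpha(u,v)}\,dt=K(B_\alpha(u,v))$ by \eqref{eq:Fourier-window}. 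This yields \eqref{eq:moment-identity}.

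The only delicate point is keeping track of the constant in the Fourier inversion, namely the $2$ in $e^{2its}$ against the $1/\pi$ in $W$. I would verify it independently through the normalization check $\int W=1$, using $\int_\R(\sin x/x)^2\,dx=\pi$.
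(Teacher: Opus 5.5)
Your proposal is correct and follows essentially the same route as the paper. You compute the Fourier transform of $K$ and invert it, where the paper computes the inverse transform of the triangle $(1-|\xi|/(2\eta))_+$ to recover $W$ and then inverts to get $\widehat W$. The lower bound $c_I=4\eta/\pi^3$ via $\sin x/x\ge 2/\pi$ and the expansion using $Q_\alpha(n)-Q_\alpha(n+u)-Q_\alpha(n+v)+Q_\alpha(n+u+v)=2B_\alpha(u,v)$ are identical to the paper's.
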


\begin{proof}
Clearly, $W\ge0$ and $W\in L^1(\R)$. With the convention
$\widehat W(\xi)=\int_\R W(t)e^{-it\xi}\,dt$, a direct calculation gives
\[
 \frac{1}{2\pi}\int_{-2\eta}^{2\eta}
 \left(1-\frac{|\xi|}{2\eta}\right)e^{it\xi}\,d\xi
 =\frac{\sin^2(\eta t)}{\pi\eta t^2}=W(t),\qquad t\ne0,
\]
with the value at $t=0$ understood by continuity. Fourier inversion
therefore yields
\[
 \widehat W(\xi)=\left(1-\frac{|\xi|}{2\eta}\right)_+=K(\xi/2).
\]
Taking \(\xi=-2s\) yields \eqref{eq:Fourier-window}, while setting \(\xi=0\) gives
\(\int_{\mathbb R} W(t)\,dt=1.\)
 For \(0\le t\le 2\pi\), we have
\[
\frac{\sin(\eta t)}{\eta t}\ge \frac{2}{\pi},
\]
and hence
one may take \(c_I:=4\eta/\pi^3\).
Expanding \(Q_\alpha\), we obtain
\begin{equation}\label{eq:quadratic-identity}
Q_\alpha(n)+Q_\alpha(n+u+v)
-Q_\alpha(n+u)-Q_\alpha(n+v)
=
2B_\alpha(u,v).
\end{equation}
It follows from \eqref{eq:Fourier-window} that
\[
\begin{aligned}
\int_{\mathbb R}\int_{\mathbb T^2}
W(t)\lvert E_\alpha f(t,x)\rvert^4
\,dx\,dt
&=
\sum_{n,u,v\in\mathbb Z^2}
f(n)\overline{f(n+u)}\,
\overline{f(n+v)}f(n+u+v)
\\
&\qquad\times
\int_{\mathbb R}
W(t)e^{2itB_\alpha(u,v)}\,dt
\\
&=
\sum_{n,u,v\in\mathbb Z^2}
f(n)\overline{f(n+u)}\,
\overline{f(n+v)}f(n+u+v)
K\bigl(B_\alpha(u,v)\bigr).
\end{aligned}
\]
This proves \eqref{eq:moment-identity}. 
\end{proof}

Assuming Proposition~\ref{prop:energy}, the preceding lemma yields
Theorem~\ref{thm:main} as follows.

\begin{proof}[Proof of Theorem~\ref{thm:main}]
By Lemma~\ref{lem:time-window}, the nonnegativity of \(K\) and
Proposition~\ref{prop:energy},
we obtain
\begin{align*}
\|E_\alpha f\|_{L_{t,x}^4([0,2\pi]\times\mathbb T^2)}^4
&\le
c_I^{-1}
\int_{\mathbb R}\int_{\mathbb T^2}
W(t)\lvert E_\alpha f(t,x)\rvert^4\,dx\,dt
\\
&\le
c_I^{-1}\mathcal E_\alpha(|f|)
\\
&\le
C_I
\Bigl(
1+\mathsf H(f)\log\#S
\Bigr)
\|f\|_{\ell^2}^4.
\end{align*}
This completes the proof of Theorem~\ref{thm:main}.
\end{proof}

\subsection{The localization operator and key estimates}
\label{sec:recursion}

In the transformed coordinates, a thin parallelogram consists
of two widely separated pairs of nearby vertices. We localize
each pair in a smooth window whose radius is large compared
with the short side and small compared with the long side.
The aim is to control the interaction between the two localized
pieces by their individual energies, and then recover the thin
contribution by averaging over the window centers and scales.

Fix a nonnegative even function $\phi\in C_c^\infty(\R^2)$,
supported in the unit ball and normalized by
$\int_{\R^2}\phi(Z)^2\,dZ=1$. Fix
\begin{equation}\label{eq:localization-parameters}
 L\ge128,\qquad \tau\ge100L.
\end{equation}
These choices remain fixed throughout the proof.

For a separation scale $R>0$, put $r=R/L$ and define
\begin{equation}\label{eq:soft-localization}
 \Phi_{R,z}(x)=\phi\left(\frac{Tx-z}{r}\right),\qquad
 d\gamma_r(z)=\frac{dz}{r^2},\qquad
 f_{R,z}(x)=f(x)\Phi_{R,z}(x).
\end{equation}
Write $\mu=\mu(f), \nu=\nu(f)$. At a fixed scale, we abbreviate
$\Phi_{R,z},f_{R,z}$ by $\Phi_z,f_z$ and put
\begin{align}\label{eq:muz-fz}
  \mu_z=\mu(f_z),\qquad \nu_z=\nu(f_z).   
\end{align}
The normalization of $\phi$ gives 
\begin{equation}\label{eq:partition-mass}
\int_{\R^2}\mu_z\,d\gamma_r(z)=\mu,
 \qquad \diam(T\supp f_z)\le2r.
\end{equation}
Define the annulus of separated center pairs by
\[
 \cA_R=\{(z,w)\in\R^2\times\R^2:R\le|w-z|<2R\}
\]
and its weighted mass by
\begin{equation}\label{eq:center-mass}
 \mathfrak m_R(f)=\iint_{\cA_R}\mu_z\mu_w\,d\gamma_r(z)d\gamma_r(w).
\end{equation}

The following identity provides the exact normalization of the
localization operator. Its proof is deferred to
Subsection~\ref{subsec:mass-entropy}.

\begin{lemma}\label{lem:exact-mass}
There exists a constant $0<\kappa_L<\infty$ depending only
on $\phi$ and $L$, such that for all $x,y\in\Z^2$,
\begin{equation}\label{eq:pairwise-exact-mass}
 \int_0^\infty\iint_{\cA_R}
       \Phi_z(x)^2\Phi_w(y)^2\,
       d\gamma_r(z)d\gamma_r(w)\frac{dR}{R}
 =\kappa_L\1_{\{x\ne y\}}.
\end{equation}
In particular,
\begin{equation}\label{eq:exact-mass}
 \int_0^\infty\mathfrak m_R(f)\,\frac{dR}{R}
 =\kappa_LJ(f)=\kappa_L(\mu^2-\nu).
\end{equation}
\end{lemma}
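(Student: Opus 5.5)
We compute the left side of \eqref{eq:pairwise-exact-mass} by changing variables and exploiting scale invariance. Fix $x,y\in\Z^2$ and write $X=Tx$, $Y=Ty$. Substituting $z=X+rZ$, $w=Y+rW$ turns $d\gamma_r(z)\,d\gamma_r(w)$ into $dZ\,dW$. With $r=R/L$, the constraint $R\le|w-z|<2R$ becomes
\[
 L\le\Bigl|W-Z+\frac{L(Y-X)}{R}\Bigr|<2L.
\]
The integrand becomes $\phi(-Z)^2\phi(-W)^2=\phi(Z)^2\phi(W)^2$, since $\phi$ is even. Hence the inner double integral equals $F(L(X-Y)/R)$, where
\[
 F(a)=\iint\phi(Z)^2\phi(W)^2\,\1_{\{L\le|W-Z-a|<2L\}}\,dZ\,dW .
\]
Here $F$ is rotation-covariant only through $\phi$. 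We then integrate $F$ against $dR/R$.

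\textbf{Case $x=y$.} Then $a=0$. Since $\supp\phi\subset B(0,1)$, we have $|W-Z|<2<L$, so the indicator vanishes and the integral is $0$ for every $R$.

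\textbf{Case $x\ne y$.} Put $D=X-Y\ne0$ and $s=L|D|/R$, so that $dR/R=ds/s$. We get
\[
 \int_0^\infty F\Bigl(s\frac{D}{|D|}\Bigr)\frac{ds}{s},
\]
which a priori depends on the direction $e=D/|D|$. Write $F(se)=\int\psi(V)\1_{\{L\le|V-se|<2L\}}\,dV$, where $\psi=\phi^2*\widetilde{\phi^2}$ is supported in $B(0,2)$ with $\int\psi=1$. By Fubini, the integral equals
\[
 \int\psi(V)\int_0^\infty\1_{\{L\le|V-se|<2L\}}\frac{ds}{s}\,dV .
\]
For $|V|\le2$ and $L\ge128$, the set of $s$ with $L\le|se-V|<2L$ is an interval $[s_1(V),s_2(V))$ with $s_1\approx L$ and $s_2\approx2L$. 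Its $ds/s$-measure is $\log(s_2/s_1)$, which is close to $\log2$ but depends on $V\cdot e$ and $|V|$. So the result still seems to depend on $e$, and this is the main obstacle.

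We resolve it using evenness of $\psi$: $\psi(V)=\psi(-V)$ because $\psi$ is an autocorrelation. We would try to show that $\int\psi(V)\log(s_2/s_1)\,dV$ is independent of $e$. Concretely, $s_{1,2}$ solve $s^2-2s(V\cdot e)+|V|^2=c^2$ with $c=L,2L$, so that
\[
 s=V\cdot e+\sqrt{c^2-|V|^2+(V\cdot e)^2},
\]
and this depends on $e$ through $V\cdot e$. Rotation invariance of $\psi$ is not assumed, so a cleaner route is the following. Interchange the order differently: integrate over $R$ first but in the original variables, i.e., compute
\[
 \int_0^\infty \1_{\{R\le|w-z|<2R\}}\,\frac{dR}{R}\cdot r^{-4}.
\]
In that form the $dR/R$ integral with the $r^{-4}$ weight does not decouple.

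Alternatively, we would accept the direction dependence as a defect and aim for exactness via a different route: redefine $\kappa_L$ after averaging. Since the paper asserts that $\kappa_L$ depends only on $\phi$ and $L$, it is more likely that one should use $\mathcal{A}_R$ on the $\log$ scale in the following way. Substitute $\rho=|w-z|$ and note that $\int_0^\infty\1_{\{R\le\rho<2R\}}\,dR/R=\log2$ exactly for every $\rho>0$. To use this, we first integrate over $R$ for fixed $(z,w)$, but the factors $r^{-4}$ and $\Phi$ also depend on $R$. So the first step is to substitute $z=X+rZ$ as above. Then, for fixed $(Z,W)$, apply the $s$-integral to $|W-Z+se|$. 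Here the exact identity $\int\1\,ds/s=\log2$ fails only because of the shift $W-Z$.

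Our final plan is to verify whether the $e$-dependence cancels by the symmetry $V\mapsto-V$ combined with $s_1(V)s_1'(-V)$-type identities. This can be checked using $s_+(V)\,s_-(V)=|V|^2-c^2$ for the two roots. If it does not cancel, we fall back on assuming $\phi$ radial, which makes $F(se)$ independent of $e$ and gives $\kappa_L=\int_0^\infty F(se_1)\,ds/s\in(0,\infty)$. Finiteness holds because $F(se_1)$ is supported in $s\in[L-2,2L+2]$, and positivity holds because $F>0$ on that range. Summing \eqref{eq:pairwise-exact-mass} against $f(x)^2f(y)^2$, using $\mu_z=\sum_x f(x)^2\Phi_z(x)^2$ and Tonelli, then gives \eqref{eq:exact-mass}.
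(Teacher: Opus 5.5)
Your reduction is the same as the paper's. You use the same change of variables, the same vanishing argument for $x=y$, and the same explicit positive roots $s_c(V)=V\cdot e+\sqrt{c^2-|V|^2+(V\cdot e)^2}$ for $c\in\{L,2L\}$. However, the proposal stops exactly at the step that carries the content of the lemma. You never show that $\int\psi(V)\log\bigl(s_{2L}(V)/s_L(V)\bigr)\,dV$ is independent of the direction $e$. You only plan to ``verify whether'' it cancels, and otherwise fall back on assuming $\phi$ radial. That fallback adds a hypothesis: only evenness of $\phi$ is assumed. Choosing $\phi$ radial would be harmless for the rest of the paper, but it proves a weaker lemma than the one stated. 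As written, the proposal does not prove the statement. The detours through integrating in $R$ first lead nowhere and can be dropped.

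The missing step is short, and it is the paper's argument. The law of $V=W-Z$ is symmetric because $Z,W$ are i.i.d., so $\psi(V)=\psi(-V)$. Average the integrand at $V$ and $-V$. Replacing $V$ by $-V$ flips the sign of $V\cdot e$ but leaves the square root unchanged, so
\[
 s_c(V)\,s_c(-V)=c^2-|V|^2+(V\cdot e)^2-(V\cdot e)^2=c^2-|V|^2 .
\]
Your Vieta identity $s_+(V)s_-(V)=|V|^2-c^2$ gives the same thing once you note $s_-(V)=-s_+(-V)$. Hence
\[
 \int\psi(V)\log\frac{s_{2L}(V)}{s_L(V)}\,dV
 =\frac12\int\psi(V)\log\frac{4L^2-|V|^2}{L^2-|V|^2}\,dV=:\kappa_L ,
\]
which depends only on $\phi$ and $L$. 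On $\supp\psi$ we have $|V|\le2<L$, so the integrand is bounded. Moreover $\frac{4L^2-|V|^2}{L^2-|V|^2}\ge4$, which gives $\log2\le\kappa_L<\infty$; the paper uses the lower bound $\kappa_L\ge\log2$ later. With this step in place, your final summation against $f(x)^2f(y)^2$ via Tonelli correctly yields \eqref{eq:exact-mass}.
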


We now organize the transfer to localized energies as a positive
operator. Its mass normalization preserves the coefficient of the
recursive term, while entropy bounds the total error accumulated
under iteration.

Define the positive localization operator by
\begin{equation}\label{eq:entropy-localization}
 (\mathscr L\Psi)(f)=\frac1{\kappa_L}
 \int_0^\infty\iint_{\cA_R}
 \frac{\mu_w}{\mu_z}\Psi(f_z)\,
 d\gamma_r(z)d\gamma_r(w)\frac{dR}{R}
\end{equation}
for nonnegative quartic homogeneous functionals $\Psi$, with the
integrand set to zero when $\mu_z\mu_w=0$. The operator is linear in $\Psi$ and preserves pointwise inequalities.
Further iterations localize each resulting function again,
with the mass ratios computed from its own localizations.
We set $\mathscr L^0\Psi=\Psi$.

Put $\mathcal P(f)=\mu(f)^2$.
The choice of weights gives
\begin{equation}\label{eq:operator-mass}
 (\mathscr L\mathcal P)(f)
=\frac1{\kappa_L}\int_0^\infty\iint_{\cA_R}\mu_z\mu_w\,d\gamma_r(z)d\gamma_r(w)\frac{dR}{R}
 =J(f) 
\end{equation}
by Lemma~\ref{lem:exact-mass}. In particular,
\begin{equation}\label{eq:operator-pair-mass}
 0\le\mathscr LJ\le\mathscr L\mathcal P=J. 
\end{equation}

For functions $g$  supported in a fixed finite set $S$, the bound
$|\Psi(g)|\le C_S\mu(g)^2$ implies
$|\mathscr L\Psi(g)|\le C_SJ(g)$.
Hence $\mathscr L$ also acts on signed functionals with this bound. For example, $\nu(g),J(g)\le\mu(g)^2$ and 
\begin{align}\label{eq:crude-energy-finiteness}
 \cE_\alpha(g)
  \le\sum_{n\in\Z^2}
    \left(\sum_xg(x)g(n-x)\right)^2 \le\#S\sum_{n,x}g(x)^2g(n-x)^2
  =\#S\,\mu(g)^2.
\end{align}

Define the nonnegative energy excess
\[
 G(f)=\cE_\alpha(f)-\mathcal P(f)
     =\cE_\alpha^*(f)+J(f)\ge0,
\]
where the last identity is \eqref{eq:zero-edges}.

\begin{lemma}\label{lem:energy-recursion}
For a nonnegative function $f$ supported in a finite nonempty set
$S\subset\Z^2$,
\begin{equation}\label{eq:entropy-energy-recursion}
 G(f)\le(\mathscr LG)(f)
             +C_I\log\#SJ(f).
\end{equation}
\end{lemma}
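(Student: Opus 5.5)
Since $G(f)=\cE^*_\alpha(f)+J(f)$, the term $J(f)$ is already absorbed by the error $C_I\log\#S\,J(f)$. It therefore suffices to prove
\[
 \cE^*_\alpha(f)\le(\mathscr LG)(f)+C_I\log\#S\,J(f).
\]
Every positive-weight term of $\cE^*_\alpha$ is a nondegenerate parallelogram with vertices $x,x+u,x+v,x+u+v$ whose transformed sides $Tu,Tv$ are nearly orthogonal. By symmetry, assume $|Tu|\le|Tv|$. We split according to the aspect ratio $|Tv|/|Tu|$ against the fixed threshold $\tau$.

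\textbf{Fat parallelograms}, with $|Tv|\le\tau|Tu|$. Here we use the weighted incidence estimate announced in the overview. Group the terms by the diagonal pair $(x,x+u+v)$, which is a pair of distinct points. Near-orthogonality forces the remaining two vertices to lie near the Thales circle over that diagonal, and within a bounded aspect ratio there are only $O_{I,\tau}(1)$ directions per dyadic angular sector up to the lattice. Fixing the diagonal and one side direction determines the parallelogram uniquely, so the direction-graph argument lets us match the side directions. Applying $ab\le\frac12(a^2+b^2)$ to $f(x+u)f(x+v)$ and summing over matched directions, which are weighted by the number of actual points rather than the diameter, should give
\[
 \sum_{\text{fat}}\le C_I\log\#S\sum_{x\ne y}f(x)^2f(y)^2=C_I\log\#S\,J(f).
\]
The logarithm enters because each point pairs with at most $\#S$ others, and the harmonic-type sum over the $O(1)$-per-scale directions up to rank $\#S$ is $O(\log\#S)$. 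I will assume this is exactly the content of the forthcoming incidence lemma.

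\textbf{Thin parallelograms}, with $|Tv|>\tau|Tu|$. The vertices form two short pairs $\{x,x+u\}$ and $\{x+v,x+u+v\}$ separated by roughly $|Tv|$. For $R$ in a range comparable to $|Tv|$, with $r=R/L\gg\tau^{-1}|Tv|\ge|Tu|$, each pair fits inside a window $\Phi_z$ or $\Phi_w$ with $(z,w)\in\cA_R$. By Lemma~\ref{lem:exact-mass} applied to the far pair, and because $\Phi$ is nearly constant across a short pair, the term can be written, up to a relative error, as
\[
 \frac1{\kappa_L}\int\iint_{\cA_R}\bigl[f_z(x)f_z(x+u)\bigr]\bigl[f_w(x+v)f_w(x+u+v)\bigr]\,d\gamma_r\,d\gamma_r\,\frac{dR}{R}.
\]
For fixed $(z,w,R)$, the sum over thin configurations is a bilinear form $\sum_{u}A_z(u)\overline{\,\cdot\,}A_w(u)$ in the autocorrelations $A_z(u)=\sum_x f_z(x)f_z(x+u)$, paired by a positive kernel $K(B(u,v))$ in which $v$ is constrained by $w-z$. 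The positive-kernel Cauchy--Schwarz inequality in the form $ab\le\frac12(\frac{\mu_w}{\mu_z}a^2+\frac{\mu_z}{\mu_w}b^2)$ bounds this by $\frac{\mu_w}{\mu_z}Q(f_z)+\frac{\mu_z}{\mu_w}Q(f_w)$. Here $Q(f_z)$ measures the short-side structure of $f_z$. Counting the $v$-choices compatible with a given $u$ and the annulus, using the angular exclusion that $Tv$ lies within $10^{-6}$ of the normal to $Tu$, shows that $Q(f_z)\le G(f_z)$, with the diagonal $u=0$ contributing $J$-type terms. Symmetrizing $z\leftrightarrow w$ then gives exactly $(\mathscr LG)(f)$ with coefficient one.

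\textbf{Errors and the main obstacle.} The remaining errors come from smoothness of $\Phi$, of relative size $O(|Tu|/r)=O(L/\tau)$, and from boundary effects in $R$. These must be charged to $J(f)$ with an $O(\log\#S)$ loss, again through the incidence count, rather than multiplied onto $\mathscr LG$. The main obstacle is keeping the coefficient of $\mathscr LG$ exactly one. This requires that the coverage of thin terms by $(z,w,R)$ uses the same $\kappa_L$ as in \eqref{eq:exact-mass}, so I will replace $\Phi_z(x)\Phi_z(x+u)$ by $\Phi_z(x)^2$ exactly and put the difference into the error term. I also have to make sure each thin parallelogram is not double counted across scales. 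If exact coverage fails, I will instead bound the slack by fat-type incidence sums, which cost only $C_I\log\#S\,J(f)$.
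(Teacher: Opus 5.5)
Deferring the moderate part to the incidence estimate is legitimate, since it is exactly \eqref{eq:moderate-bound}. Your thin-part architecture also matches the paper's: coverage by separated window pairs with the same $\kappa_L$ as in Lemma~\ref{lem:exact-mass}, then a $\mu_w/\mu_z$-weighted positive-kernel Cauchy--Schwarz. However, the two steps that make the recursion close are missing.

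\emph{First, the coverage error.} You estimate it as a relative error $O(|Tu|/r)=O(L/\tau)$ and plan to charge it to $J(f)$ through the incidence count. That fails. The available incidence input, Proposition~\ref{prop:aspect}, costs $C_Is\log\#S\,J(f)$ for each dyadic class $s\le|Tv|/|Tu|<2s$, and the number of nonempty classes is governed by the diameter of $S$, not by $\#S$. A uniform $O(L/\tau)$ error leads to the divergent sum $\sum_{s>\tau}s$. Even the sharp first-order error $O(L/s)$ leaves one copy of $\log\#S\,J(f)$ per dyadic class, i.e.\ an extra logarithm of the diameter. As you note, absorbing it into $\cE_{\rm thin}$ would raise the coefficient of $\mathscr LG$ above one. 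What is needed is a \emph{second-order} defect, $\int_0^\infty\1_{\{|Tu|\le3R/\tau\}}\mathcal H_R(x,u,v)\,\frac{dR}{R}\ge\kappa_L-C_\phi L^2/s^2$ (Lemma~\ref{lem:soft-coverage}). The paper obtains it by centering the windows at the midpoints $T(x+u/2)$ and $T(x+v+u/2)$. Then $\Phi_z(x)\Phi_z(x+u)=\psi_h(Z_1)$ with $\psi_h=\phi(\cdot-h)\phi(\cdot+h)$ and $|h|\le2L/s$. Evenness of $\phi$ kills the linear term, giving $\|\psi_h-\phi^2\|_{L^1}\le C_\phi|h|^2$. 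The $\sigma^{-2}$-weighted thin sum is then bounded by \eqref{eq:aspect-error}, by $C_I\tau^{-1}\log\#S\,J(f)$, because $\sum_{s>\tau/2}s\cdot s^{-2}\lesssim\tau^{-1}$. There are no boundary effects in $R$ and no double counting across scales: the $dR/R$ integral is exact, and the cutoff $|Tu|\le3R/\tau$ holds automatically on the support of $\mathcal H_R$.

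\emph{Second, the coefficient.} Since $W_f(x,u,v)=W_f(x,v,u)$, one has $\cE_{\rm thin}=2\sum_{|Tv|>\tau|Tu|}W_f$, and your ``by symmetry assume $|Tu|\le|Tv|$'' hides this factor. Even granting $Q(f_z)\le G(f_z)$, you arrive at $\cE_{\rm thin}\le2\mathscr LG+\cdots$, and a coefficient $2$ becomes $2^j$ after $j$ generations. The missing device is the angular cone $\Gamma_{z,w}=\{u\ne0:\bigl|\tfrac{Tu}{|Tu|}\cdot\omega\bigr|\le4/L\}$, where $\omega=(w-z)/|w-z|$.
\begin{itemize}
\item Every short side of a covered thin parallelogram lies in $\Gamma_{z,w}$, by near-orthogonality and $|Tv-(w-z)|\le2r$.
\item Its indicator depends only on $u$, so it can be inserted before the per-$u$ Cauchy--Schwarz in the base points. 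That form is $\sum_{x,y}F_{z,u}(x)F_{w,u}(y)K(B(u,y-x))$ with $F_{z,u}(x)=f_z(x)f_z(x+u)$. It is not a form in the autocorrelations $A_z(u)$, because the kernel depends on $y-x$.
\item Two nonzero vectors $u,h$ with $K(B(u,h))>0$ cannot both lie in $\Gamma_{z,w}$ when $L\ge128$.
\end{itemize}
Exchanging $u\leftrightarrow h$ in the local sum therefore bounds the cone-restricted local self-energy $\sum_u\1_{\Gamma_{z,w}}(u)\sum_{x,y}F_{z,u}(x)F_{z,u}(y)K(B(u,y-x))$ by $\frac12\cE^*_\alpha(f_z)+J(f_z)=\frac12\bigl(\cE_\alpha(f_z)-\nu_z\bigr)$. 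This $\frac12$ cancels the $2$, yielding $\cE^*_\alpha\le\mathscr L(\cE_\alpha-\nu)+C_I\log\#S\,J=\mathscr LG+\mathscr LJ+C_I\log\#S\,J$. The bound $\mathscr LJ\le J$ then finishes the proof.
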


The proof of Lemma~\ref{lem:energy-recursion} combines the
aspect-ratio bounds with the localization estimates; it is given in
Subsection~\ref{subsec:energy-recursion-proof}.

The error in Lemma~\ref{lem:energy-recursion} is proportional to
$J$. To sum these errors, we seek a nonnegative functional whose
decrease under $\mathscr L$ controls $J$.

For a probability distribution $p$ on a finite set, define its
entropy by $\operatorname{Ent}(p)=\sum_xp(x)\log(1/p(x))$,
with the convention $0\log(1/0)=0$, and set
\begin{equation}\label{eq:entropy-functional}
 \mathscr H(f)=\mu(f)^2\mathsf H(f),\qquad \mathscr H(0)=0.
\end{equation}
The factor $\mu(f)^2$ gives the same homogeneity as the energy.

The functional $\mathscr H$ has the required property, as
shown by the following lemma.

\begin{lemma}\label{lem:entropy-dissipation}
For every nonnegative finitely supported $f$,
\begin{equation}\label{eq:entropy-dissipation}
 \mathscr H(f)-(\mathscr L\mathscr H)(f)\ge(\log2)J(f).
\end{equation}
Consequently, for every integer $n\ge1$,
\begin{equation}\label{eq:entropy-telescoping}
 \sum_{j=0}^{n-1}(\mathscr L^jJ)(f)
\le\frac{\mathscr H(f)}{\log2}.
\end{equation}
\end{lemma}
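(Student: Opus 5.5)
The plan is to first prove the one-step inequality \eqref{eq:entropy-dissipation} by a pointwise computation in $x$, and then obtain \eqref{eq:entropy-telescoping} by positivity of $\mathscr L$. Write $f_x=f(x)$. A direct expansion of \eqref{eq:coefficient-entropy} gives
\[
 \mathscr H(f)=\mu\sum_x f_x^2\log\frac{\mu}{f_x^2},
 \qquad
 \mu_z\mathscr H(f_z)^{1/1}\!\!\;\!=\!\;\mu_z\cdot\mu_z\sum_x \frac{a_x(z)}{\mu_z}\,\mu_z\log\frac{\mu_z}{a_x(z)}\Big/\mu_z ,
\]
where $a_x(z)=f_x^2\Phi_z(x)^2$. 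Hence the weight $\mu_w/\mu_z$ in \eqref{eq:entropy-localization} cancels one factor of $\mu_z$. We obtain
\[
 (\mathscr L\mathscr H)(f)=\sum_x\int a_x(z)\log\frac{\mu_z}{a_x(z)}\,dm(z,w,R),
 \qquad dm=\frac{\mu_w}{\kappa_L}\1_{\cA_R}\,d\gamma_r(z)d\gamma_r(w)\frac{dR}{R}.
\]
By Lemma~\ref{lem:exact-mass}, the total $m$-mass seen by $a_x$ is exact:
\[
 \int a_x\,dm=f_x^2\sum_{y\ne x}f_y^2=f_x^2(\mu-f_x^2).
\]
So the target \eqref{eq:entropy-dissipation} becomes a sum over $x$ of inequalities comparing $\int a_x\log(\mu_z/a_x)\,dm$ with $\mu f_x^2\log(\mu/f_x^2)$, with a deficit of $(\log 2)f_x^2(\mu-f_x^2)$.

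For each fixed $x$ I would then use the joint concavity and $1$-homogeneity of the perspective function $(a,b)\mapsto a\log(b/a)$, via Jensen's inequality. The point is to apply it only on the part of the measure where $\Phi_z(x)\ne0$, after separating the profile factor: write $\log(\mu_z/a_x)=\log(\mu_z/(f_x^2\Phi_z(x)^2))$ and split off the term $\Phi_z(x)^2\log(1/\Phi_z(x)^2)$. That term integrates, again by the scaling structure in Lemma~\ref{lem:exact-mass}, to a fixed multiple of $J(f)$ depending only on $\phi$ and $L$. The remaining piece should be controlled by $f_x^2(\mu-f_x^2)\log(B_x/A_x)$, where $B_x$ is the $m$-integral of $\mu_z$ over windows containing $x$. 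The mechanism for the gain of $\log2$ is \emph{locality}. When $z$ is near $Tx$, the window $\Phi_z$ has radius $r=R/L$, while $w$ lies at distance about $R$ from $z$. Thus every point $y$ with $\Phi_w(y)\ne0$ is excluded from $\Phi_z$. So the local mass $\mu_z$ never sees the pairs that generate the weight $\mu_w$, and $\mu_z\le\mu-(\text{mass near }w)$. Summed against $dm$, this should give $B_x\le\frac12\cdot(\text{the full } \mu\text{-budget})$, i.e.\ at least a factor $2$ loss in the logarithm per separated pair. This is what produces $(\log2)J(f)$.

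The telescoping bound \eqref{eq:entropy-telescoping} is then routine. Since $\mathscr L$ is linear and order preserving, apply $\mathscr L^j$ to \eqref{eq:entropy-dissipation}, which is legitimate because all functionals involved satisfy $|\Psi(g)|\le C_S\mu(g)^2$. This gives $(\log2)\mathscr L^jJ\le\mathscr L^j\mathscr H-\mathscr L^{j+1}\mathscr H$. Summing over $0\le j<n$ and using $\mathscr L^n\mathscr H\ge0$ yields the claim.

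The main obstacle is the Jensen step. A naive global application of concavity, with $B=J(f)$, is too lossy: it already fails for the uniform distribution on two points. The estimate must genuinely use that the localization window at $z$ and the separated window at $w$ are disjoint, so that the mass $\mu_z$ is computed on a set missing a definite portion of the pair mass. The profile term $\Phi^2\log(1/\Phi^2)$ also has no evident sign. I would first try to absorb it by choosing the splitting so that it appears on both sides with the same constant. Failing that, I would verify that its contribution integrates to something nonpositive relative to the $\log2$ gain for the fixed $L\ge128$.
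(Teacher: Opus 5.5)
Your reduction is sound: $\mathscr H(f_z)=\mu_z\sum_x a_x(z)\log(\mu_z/a_x(z))$ gives $(\mathscr L\mathscr H)(f)=\sum_x\int a_x\log(\mu_z/a_x)\,dm$ with $\int a_x\,dm=f_x^2(\mu-f_x^2)$. The inequality can indeed be proved pointwise in $x$, and your telescoping step matches the paper's.

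\textbf{The gap.} The one-step inequality, which is the entire content of the lemma, is only asserted (``should give''). Neither of the two ingredients you propose for the $\log2$ gain works.

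\emph{Splitting off the profile term.} Splitting off $\Phi_z(x)^2\log(1/\Phi_z(x)^2)$ is harmful. After the split, Jensen applied to the remaining piece produces the weighted quantity $\int\Phi_z(x)^2\mu_z\,dm$, not your unweighted $B_x$. The profile term plus this logarithm then overshoots by a $\phi$-dependent Jensen gap. Take $f=\1_{\{x,y\}}$. On the support of $a_x\,dm$ one has $\mu_z=a_x$, so $\int a_x\log(\mu_z/a_x)\,dm=0$, while the right-hand side of your pointwise target equals $\log2$. Your split bound equals $\log\E_\nu[\Phi_z(x)^2]-\E_\nu[\log\Phi_z(x)^2]$ for the probability measure $d\nu=\kappa_L^{-1}\Phi_z(x)^2\Phi_w(y)^2\1_{\cA_R}\,d\gamma_r(z)d\gamma_r(w)\,dR/R$. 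This exceeds $\log2$, for every $L$, when $\phi^2$ has a tall narrow central spike carrying a fixed fraction of its mass, which is admissible since only $\int\phi^2=1$ is imposed. With the unweighted $B_x$ instead, your claimed bound on the remaining piece would say $\E_\nu[\log\Phi_z(x)^2]\le0$, which fails for the same $\phi$.

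\emph{The locality heuristic.} The bound $\mu_z\le\mu-(\text{mass near }w)$ is not available, since $\Phi_z$ may exceed $1$. Even where it holds, it decouples $\mu_z$ from the weight $\mu_w$, so it cannot produce a factor $\frac12$.

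\textbf{The missing idea and a fix.} What is needed is the symmetry of $\mu_z\mu_w\1_{\cA_R}\,d\gamma_r(z)d\gamma_r(w)$ under $z\leftrightarrow w$, combined with disjointness of the paired windows. Your own $B_x$ (the $m$-integral of $\mu_z$ over windows containing $x$) is then the right quantity.
\begin{enumerate}
\item Apply Jensen for $\log$, with weight $a_x\,dm$ on $\{\Phi_z(x)\ne0\}$, directly to $a_x\log(\mu_z/a_x)$ without splitting. This gives $\int a_x\log(\mu_z/a_x)\,dm\le A_x\log(B_x/A_x)$, where $A_x=f_x^2(\mu-f_x^2)$.
\item Since $|w-z|\ge R>2r$, one has $\1_{\{\Phi_z(x)\ne0\}}+\1_{\{\Phi_w(x)\ne0\}}\le1$ on $\cA_R$.
\item By the $z\leftrightarrow w$ symmetry and Lemma~\ref{lem:exact-mass}, $2B_x\le J(f)\le\mu^2(1-p^2)$ with $p=f_x^2/\mu$.
\item The pointwise target then reduces to $(1-p)\log(1+p)\le p\log(1/p)$. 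This holds because $(1-p)\log(1+p)\le p(1-p)\le p\log(1/p)$.
\end{enumerate}

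The paper packages the same symmetry differently. It forms $q_{z,w}=(q_z+q_w)/2$, whose entropy is $\log2+\frac12(\operatorname{Ent}q_z+\operatorname{Ent}q_w)$ because the supports are disjoint. It then uses the marginal identity $\int q_z(x)\,d\varrho=p(x)(1-p(x))$ to write $p=\int q_{z,w}\,d\varrho+\sum_xp(x)^2\delta_x$, and concludes by concavity of entropy.
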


Lemma~\ref{lem:entropy-dissipation} is proved in
Subsection~\ref{subsec:mass-entropy}. We now show how
the two estimates complete Proposition~\ref{prop:energy}.

\subsection{Proof of Proposition~\ref{prop:energy}}

For a nonnegative finitely supported function $g$, write
\[
g_{R,z}(x)=g(x)\Phi_{R,z}(x),
\qquad
\mu_{R,z}(g)=\mu(g_{R,z})
\]
 as in \eqref{eq:soft-localization} and define
\[
\cA_R^+(g)
=\{(z,w)\in\cA_R:\mu_{R,z}(g)\mu_{R,w}(g)>0\}.
\]
An \emph{effective child} of $g$ is a localization $g_{R,z}$
for which $(z,w)\in\cA_R^+(g)$ for some $R>0$ and $w$.
Define the successive generations recursively by
\[
\mathcal D_0(g)=\{g\},
\qquad
\mathcal D_{j+1}(g)
=\{h_{R,z}:h\in\mathcal D_j(g),\ R>0,\ (z,w)\in\cA_R^+(h)\}.
\]
Thus each child is obtained by localizing its current parent,
and the positivity condition uses the local masses of that
parent. The elements of $\mathcal D_j(g)$ are the descendants
of $g$ in generation $j$.

If $g_{R,z}$ is an effective child, its paired localization
$g_{R,w}$ has nonempty support disjoint from that of $g_{R,z}$,
since $|w-z|\ge R>2R/L$. Both supports lie in $\supp g$, so
\begin{equation}\label{eq:support-contraction}
	\#(\supp g_{R,z})\le\#(\supp g)-1.
\end{equation}
In particular, the support cardinality strictly decreases
along every sequence of effective children.

The iterations of the operator follow the same recursive rule.
For every nonnegative quartic homogeneous functional $\Psi$
and every integer $n\ge0$, \eqref{eq:entropy-localization} gives
\begin{equation}\label{eq:iterated-localization}
	(\mathscr L^{n+1}\Psi)(g)
	=\frac1{\kappa_L}\int_0^\infty\iint_{\cA_R^+(g)}
	\frac{\mu_{R,w}(g)}{\mu_{R,z}(g)}
	(\mathscr L^n\Psi)(g_{R,z})
	\,d\gamma_{R/L}(z)d\gamma_{R/L}(w)\frac{dR}{R}.
\end{equation}
Here $(\mathscr L^n\Psi)(g_{R,z})$ represents $n$ further
generations of localization starting from the effective child
$g_{R,z}$. Thus $\Psi$ is evaluated on descendants
of $g_{R,z}$ in generation $n$, which are descendants of $g$
in generation $n+1$. At each subsequent generation, the
localizations and the mass ratios are computed from the
current parent.

\begin{proof}[Proof of Proposition~\ref{prop:energy}]
Fix a nonzero nonnegative $f$ and write $S=\supp f$. Lemma~\ref{lem:energy-recursion} gives
\begin{align}\label{eq:proof-prop-1}
G(g)\le(\mathscr LG)(g)+C_I\log\#S J(g)
\end{align}
for every nonnegative function $g$ supported in $S$.
All effective children of $g$ are again supported in $S$. Thus positivity of $\mathscr L$ allows us to apply
$\mathscr L^j$ to \eqref{eq:proof-prop-1}, with the same factor
$\log\#S$, to obtain
\begin{align}\label{eq:proof-prop-2}
(\mathscr L^jG)(f)
\le(\mathscr L^{j+1}G)(f)
+C_I\log\#S(\mathscr L^jJ)(f)
\qquad(j\ge0).
\end{align}
Summing \eqref{eq:proof-prop-2} over $j=0,\ldots,n-1$ and cancelling the intermediate
terms gives
\begin{equation}\label{eq:iterated-energy-recursion}
	G(f)\le(\mathscr L^nG)(f)
	+C_I\log\#S\sum_{j=0}^{n-1}(\mathscr L^jJ)(f)
	\qquad(n\ge1).
\end{equation}

We prove by induction that, for every $n\ge0$,
\begin{align}\label{eq:proof-prop-3}
\#(\supp g)\le n+1
\quad\Longrightarrow\quad
(\mathscr L^nG)(g)=0.
\end{align}
For $n=0$, any nonnegative function $g$ supported on at most
one point satisfies $\cE_\alpha^*(g)=J(g)=G(g)=0$.
Suppose the claim holds for $n-1$, where $n\ge1$, and let
$\#(\supp g)\le n+1$. By \eqref{eq:support-contraction},
every effective child $g_{R,z}$ has support cardinality
at most $n$. The induction hypothesis therefore gives
\[
(\mathscr L^{n-1}G)(g_{R,z})=0.
\]
Substitution into \eqref{eq:iterated-localization} proves
$(\mathscr L^nG)(g)=0$, completing the induction.

In particular, \eqref{eq:proof-prop-3} implies that $(\mathscr L^{\#S}G)(f)=0$. Taking $n=\#S$ in \eqref{eq:iterated-energy-recursion} and
applying \eqref{eq:entropy-telescoping} from
Lemma~\ref{lem:entropy-dissipation}, we conclude that
\[
G(f)\le\frac{C_I}{\log2}\log(\#S)\mathscr H(f)
=\frac{C_I}{\log2}\mu(f)^2\mathsf H(f)\log(\#S).
\]
Absorbing $1/\log2$ into $C_I$ and adding $\mathcal P(f)$ proves
\eqref{eq:energy-bound}.
\end{proof}

\section{Weighted incidence and aspect-ratio estimates}
\label{sec:incidences}

The goal of this section is Proposition~\ref{prop:aspect} and its
consequences in Corollary~\ref{cor:aspect-sums}. These bounds
control the contribution of moderate aspect ratios and the coverage
error for thin parallelograms in Lemma~\ref{lem:energy-recursion}.
We first prove the weighted direction-graph estimate and then sum
over the actual long point pairs.

\subsection{Direction graphs and positive kernels}

For $w\in\R^2\setminus\{0\}$, its unoriented direction is
\begin{equation}\label{eq:direction-definition}
 \dir(w)=\R w=\{tw:t\in\R\}.
\end{equation}
Thus $\dir(w)=\dir(cw)$ for every nonzero real $c$.
The direction space is denoted by $\mathbb{RP}^1$.
We write $\ell\parallel\theta$ if the affine line $\ell$ is a
translate of $\theta$, and $w\parallel\theta$ if $w\in\theta$;
the latter includes $w=0$.

A \emph{simple graph on unoriented directions} is a pair
\[
\mathcal G=(V(\mathcal G),E(\mathcal G)),
\]
where
\[
V(\mathcal G)\subset \mathbb{RP}^1,
\qquad
E(\mathcal G)\subset
\bigl\{
\{\theta,\theta'\}:
\theta,\theta'\in V(\mathcal G),\ \theta\neq\theta'
\bigr\}.
\]
The elements of \(V(\mathcal G)\) and \(E(\mathcal G)\) are called the
vertices and edges of \(\mathcal G\), respectively. Thus, ``simple''
means that \(\mathcal G\) has no loops or multiple edges. 
We write $\theta\sim_\cG\theta'$ when
$\{\theta,\theta'\}\in E(\cG)$.

For \(\theta\in\mathbb{RP}^1\), define
\begin{equation}\label{eq:graph-degree}
N_{\mathcal G}(\theta)
=
\{\theta'\in V(\mathcal G):\theta\sim_{\mathcal G}\theta'\},
\qquad
\deg_{\mathcal G}(\theta)
=
\#N_{\mathcal G}(\theta),
\end{equation}
where directions outside \(V(\mathcal G)\) are assigned degree zero.
We say that \(\mathcal G\) has maximum degree at most \(d\) if
\begin{equation}\label{eq:max-degree}
\sup_{\theta\in\mathbb{RP}^1}
\deg_{\mathcal G}(\theta)
\le d.
\end{equation}

Throughout, we assume \(d\ge1\), even if \(\mathcal G\) has no edges and allow infinite vertex sets.
For nonnegative finitely supported functions $f_1,f_2,f_3,f_4$ on
$\Z^2$, define
\begin{align}\label{eq:graph-form}
 \mathcal T_\cG(f_1,f_2,f_3,f_4)
 :=\sum_{\substack{x,u,v\in\Z^2\\u,v\ne0\\
       \{\dir(Tu),\dir(Tv)\}\in E(\cG)}}
 f_1(x)f_2(x+u)f_3(x+v)f_4(x+u+v)K(B(u,v)).
\end{align}

For example, let \(\mathcal G\) consist of the single edge joining
\(\mathbb R(1,0)\) and \(\mathbb R(0,1)\). Then \(\mathcal G\) has
maximum degree one, and \(\mathcal T_{\mathcal G}\) gives a weighted
count of parallelograms with one pair of horizontal sides and one pair
of vertical sides.

The associated graph energy is defined by
\begin{equation}\label{eq:graph-energy}
\mathcal E_{\mathcal G}(f)
:=
\mathcal T_{\mathcal G}(f,f,f,f).
\end{equation}

A kernel $L:\Z^2\to\mathbb C$ is called \emph{positive definite}
here if, for every finitely supported complex function $c$,
\[
 \sum_{x,y\in\Z^2}\overline{c(x)}c(y)L(y-x)\ge0.
\]
Such a kernel induces the positive semidefinite sesquilinear form
\[
\langle F,G\rangle_L
:=
\sum_{x,y\in\mathbb Z^2}
\overline{F(x)}\,G(y)L(y-x),
\]
which satisfies the Cauchy--Schwarz inequality
\[
|\langle F,G\rangle_L|^2
\le
\langle F,F\rangle_L
\langle G,G\rangle_L.
\]

For every fixed $u$, the kernel $v\mapsto K(B(u,v))$ is positive
definite. Indeed, \eqref{eq:Fourier-window} gives the explicit
representation
\begin{equation}\label{eq:positive-kernel-representation}
 \sum_{x,y}\overline{c(x)}c(y)K(B(u,y-x))
 =\int_\R W(t)
       \left|\sum_x c(x)e^{2itB(u,x)}\right|^2\,dt\ge0.
\end{equation}

\begin{lemma}
\label{lem:graph-holder}
Suppose that \(\mathcal G\) has maximum degree at most \(d\ge1\).
For a nonnegative finitely supported function
\(f:\mathbb Z^2\to[0,\infty)\), define
\[
H_{\mathcal G}(f)
:=
\mathcal E_{\mathcal G}(f)
+
d\|f\|_{\ell^2}^4.
\]
Then, for any nonnegative finitely supported functions
\(f_1,f_2,f_3,f_4:\mathbb Z^2\to[0,\infty)\),
\begin{equation}\label{eq:graph-holder}
\mathcal T_{\mathcal G}(f_1,f_2,f_3,f_4)
\le
2\prod_{i=1}^4 H_{\mathcal G}(f_i)^{1/4}.
\end{equation}
\end{lemma}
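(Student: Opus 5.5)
The plan is to apply the Cauchy--Schwarz inequality twice, each time for the positive semidefinite forms induced by the kernels $v\mapsto K(B(u,v))$ with $u$ frozen. The difficulty is that the edge constraint $\{\dir(Tu),\dir(Tv)\}\in E(\cG)$ is not a positive definite restriction in $v$. I will remove it by splitting according to the neighbouring direction. Fix $u\ne0$ with $\theta=\dir(Tu)$. For each $\theta'\in N_\cG(\theta)$, the set $\Lambda_{\theta'}=\{v\in\Z^2: Tv\parallel\theta'\}$ is a subgroup of $\Z^2$ (it contains $0$). Consider the kernel
\[
L_{u,\theta'}(v)=K(B(u,v))\1_{\Lambda_{\theta'}}(v).
\]
It is positive definite: the quadratic form $\sum_{x,y}\overline{c(x)}c(y)L_{u,\theta'}(y-x)$ splits over cosets of $\Lambda_{\theta'}$, and on each coset it is the form of \eqref{eq:positive-kernel-representation} applied to $c$ restricted to that coset, hence nonnegative.

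\emph{First step.} Define $M(a,b)=\mathcal T_\cG(a,b,a,b)+d\|a\|_{\ell^2}^2\|b\|_{\ell^2}^2$. I will show
\[
\mathcal T_\cG(f_1,f_2,f_3,f_4)\le M(f_1,f_2)^{1/2}M(f_3,f_4)^{1/2}.
\]
Set $F_u(x)=f_1(x)f_2(x+u)$ and $G_u(y)=f_3(y)f_4(y+u)$. Writing $y=x+v$, the sum defining $\mathcal T_\cG$ becomes
\[
\sum_{u\ne0}\ \sum_{\theta'\in N_\cG(\dir(Tu))}\ \sum_{x,y}F_u(x)G_u(y)L_{u,\theta'}(y-x),
\]
except that the right side also contains the nonnegative $v=0$ terms. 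Since these terms are nonnegative, the right side is an upper bound for $\mathcal T_\cG$, and it equals $\sum_{u,\theta'}\langle F_u,G_u\rangle_{L_{u,\theta'}}$. Directions $\theta'$ with $v\parallel\theta'$, $v\ne0$, correspond bijectively to the nonzero $v$ in the original sum, because the lines $\theta'$ meet only at $0$.

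I then apply the Cauchy--Schwarz inequality for each form, and afterwards in the discrete variable $(u,\theta')$. This gives the bound $A_{12}^{1/2}A_{34}^{1/2}$, where $A_{12}=\sum_{u,\theta'}\langle F_u,F_u\rangle_{L_{u,\theta'}}$. Expanding $A_{12}$, the terms with $v\ne0$ reassemble exactly into $\mathcal T_\cG(f_1,f_2,f_1,f_2)$. The terms with $v=0$ are at most $\deg_\cG(\dir(Tu))\sum_x f_1(x)^2f_2(x+u)^2\le d\sum_x f_1(x)^2f_2(x+u)^2$; summing over $u$ gives at most $d\|f_1\|^2\|f_2\|^2$. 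Hence $A_{12}\le M(f_1,f_2)$, and likewise $A_{34}\le M(f_3,f_4)$.

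\emph{Second step.} Since $\cG$ is undirected and $B$ is symmetric, interchanging the names $u\leftrightarrow v$ in the defining sum gives $\mathcal T_\cG(f_1,f_2,f_1,f_2)=\mathcal T_\cG(f_1,f_1,f_2,f_2)$. Applying the first step to the latter, with $M(f_1,f_1)=H_\cG(f_1)$ and $M(f_2,f_2)=H_\cG(f_2)$, yields
\[
M(f_1,f_2)\le H_\cG(f_1)^{1/2}H_\cG(f_2)^{1/2}+d\|f_1\|^2\|f_2\|^2\le 2H_\cG(f_1)^{1/2}H_\cG(f_2)^{1/2}.
\]
The last inequality holds because $d\|f_i\|^4\le H_\cG(f_i)$ and $d\ge1$, so $d\|f_1\|^2\|f_2\|^2\le H_\cG(f_1)^{1/2}H_\cG(f_2)^{1/2}$. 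Combining the same bound for $M(f_3,f_4)$ with the first step gives
\[
\mathcal T_\cG\le\bigl(2\sqrt{H_1H_2}\bigr)^{1/2}\bigl(2\sqrt{H_3H_4}\bigr)^{1/2}=2\prod_i H_\cG(f_i)^{1/4},
\]
where $H_i=H_\cG(f_i)$. This is \eqref{eq:graph-holder}.

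The main point requiring care is the first step. One must check that the subgroup restriction keeps the kernel positive definite, that every $v\ne0$ is counted exactly once over $\theta'$ (distinct lines through $0$ meet only at $0$), and that adding the $v=0$ diagonal costs at most a factor $d$ through the degree bound. It also needs checking that $u=0$ never occurs, which holds because $u=0$ is excluded in \eqref{eq:graph-form}. Finiteness of supports makes all the sums finite, including the case of infinite vertex sets.
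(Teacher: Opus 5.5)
Your proof is correct and follows essentially the same route as the paper: restrict $K(B(u,\cdot))$ to lines parallel to each neighbouring direction to get positive definite kernels, apply Cauchy--Schwarz twice with the $v=0$ diagonal costing at most $d\|f\|_{\ell^2}^2\|g\|_{\ell^2}^2$, and use the $u\leftrightarrow v$ symmetry to bound $\mathcal T_{\mathcal G}(f,g,f,g)$ by $H_{\mathcal G}(f)^{1/2}H_{\mathcal G}(g)^{1/2}$. The only difference is cosmetic: the paper sums the neighbour directions into a single kernel $L_u$ and applies Cauchy--Schwarz for each $u$, while you keep one kernel per pair $(u,\theta')$. Also, $d\ge1$ is not actually needed in your last inequality.
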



\begin{proof}
For \(u\neq0\), define
\[
L_u(h)
=
K(B(u,h))
\sum_{\theta\in N_{\mathcal G}(\operatorname{dir}(Tu))}
\mathbf 1_{\{Th\parallel\theta\}}.
\]
For fixed \(\theta\in N_{\mathcal G}(\operatorname{dir}(Tu))\),
since \(T(y-x)\parallel\theta\) if and only if \(Tx\) and \(Ty\)
lie on the same affine line \(\ell\parallel\theta\), we have
\[
\begin{aligned}
\sum_{x,y\in\mathbb Z^2}
\overline{c(x)}c(y)
K(B(u,y-x))
\mathbf 1_{\{T(y-x)\parallel\theta\}}
&=
\int_{\mathbb R}
W(t)
\sum_{\ell\parallel\theta}
\Big|
\sum_{x\in\mathbb Z^2,Tx\in\ell}
c(x)e^{2itB(u,x)}
\Big|^2
\,dt
\ge0,
\end{aligned}
\]
where only finitely many lines \(\ell\) contribute, since \(c\) is finitely supported.
Summing over
\(\theta\in N_{\mathcal G}(\operatorname{dir}(Tu))\)
shows that \(L_u\) is positive definite. Moreover,
\(L_u(h)\ge0\) for all \(h\in\mathbb Z^2\).

Set
\[
F_u^{f,g}(x)=f(x)g(x+u),
\qquad
A(f,g)
=
\sum_{u\neq0}
\langle F_u^{f,g},F_u^{f,g}\rangle_{L_u}.
\]
By a straightforward computation,
\[
\begin{aligned}
A(f,g)
&=
\sum_{u\neq0}
\sum_{x,y\in\mathbb Z^2}
f(x)g(x+u)f(y)g(y+u)L_u(y-x)
\\
&=
\sum_{\substack{u\neq0\\x,v\in\mathbb Z^2}}
f(x)g(x+u)f(x+v)g(x+u+v)
K(B(u,v))
\sum_{\theta\in N_{\mathcal G}(\operatorname{dir}(Tu))}
\mathbf 1_{\{Tv\parallel\theta\}}.
\end{aligned}
\]
Separating the cases \(v\neq0\) and \(v=0\), we obtain
\begin{align}\label{eq:graph-collision}
A(f,g)
&=
\mathcal T_{\mathcal G}(f,g,f,g)
+
\sum_{u\neq0}
\deg_{\mathcal G}(\operatorname{dir}(Tu))
\sum_{x\in\mathbb Z^2}
f(x)^2g(x+u)^2
\notag\\
&\le
\mathcal T_{\mathcal G}(f,g,f,g)
+
d\|f\|_{\ell^2}^2\|g\|_{\ell^2}^2.
\end{align}
In particular, \(A(f,f)\le H_{\mathcal G}(f).\)

Since the \(v=0\) terms are nonnegative, we may add them to the sum and obtain
\begin{align}\label{eq:graph-first-cauchy}
\mathcal T_{\mathcal G}(f_1,f_2,f_3,f_4)
&\le
\sum_{u\neq0}
\left\langle
F_u^{f_1,f_2},
F_u^{f_3,f_4}
\right\rangle_{L_u}
\notag\\
&\le
\sum_{u\neq0}
\left\langle
F_u^{f_1,f_2},
F_u^{f_1,f_2}
\right\rangle_{L_u}^{1/2}
\left\langle
F_u^{f_3,f_4},
F_u^{f_3,f_4}
\right\rangle_{L_u}^{1/2}
\notag\\
&\le
\left(
\sum_{u\neq0}
\left\langle
F_u^{f_1,f_2},
F_u^{f_1,f_2}
\right\rangle_{L_u}
\right)^{1/2}
\left(
\sum_{u\neq0}
\left\langle
F_u^{f_3,f_4},
F_u^{f_3,f_4}
\right\rangle_{L_u}
\right)^{1/2}
\notag\\
&=
A(f_1,f_2)^{1/2}
A(f_3,f_4)^{1/2},
\end{align}
the last two inequalities follow from two applications of the
Cauchy--Schwarz inequality. By symmetry in \(u\) and \(v\) and
\eqref{eq:graph-first-cauchy},
\[
\mathcal T_{\mathcal G}(f,g,f,g)
\le
A(f,f)^{1/2}A(g,g)^{1/2}
\le
H_{\mathcal G}(f)^{1/2}H_{\mathcal G}(g)^{1/2}.
\]
Furthermore, \(d\|f\|_{\ell^2}^2\|g\|_{\ell^2}^2
\le
H_{\mathcal G}(f)^{1/2}H_{\mathcal G}(g)^{1/2}.\)

Combining the preceding two estimates with
\eqref{eq:graph-collision} gives \(\)
\[
A(f,g)
\le
2H_{\mathcal G}(f)^{1/2}H_{\mathcal G}(g)^{1/2}.
\]
Substituting this into \eqref{eq:graph-first-cauchy} proves
\eqref{eq:graph-holder}.
\end{proof}

\subsection{Weighted parallelograms with matched directions}
We recall some results from incidence geometry.
If $P\subset\R^2$ is finite and $\mathcal L$ is a finite family
of distinct lines, a \emph{point--line incidence} is a pair
$(x,\ell)\in P\times\mathcal L$ with $x\in\ell$.
Its total number is
\[
 \mathcal I(P,\mathcal L)
 =\#\{(x,\ell)\in P\times\mathcal L:x\in\ell\}
 =\sum_{\ell\in\mathcal L}\#(P\cap\ell).
\]
The Szemer\'edi--Trotter theorem \cite{SzemerediTrotter} states that
\begin{equation}\label{eq:ST}
 \mathcal I(P,\mathcal L)
 \le C\bigl((\#P)^{2/3}(\#\mathcal L)^{2/3}
                  +\#P+\#\mathcal L\bigr).
\end{equation}
It implies the following rich-line bound.
\begin{lemma}[\cite{TaoVu}, Corollary~8.5]
\label{lem:rich-lines}
Let \(P\subset\mathbb R^2\) be a set of \(n\) points, where
\(n\in\mathbb N\), and let \(k\ge2\) be an integer. The number \(m\)
of lines in \(\mathbb R^2\) passing through at least \(k\) points of
\(P\) satisfies
\begin{equation}\label{eq:rich-lines}
m\lesssim \frac{n^2}{k^3}+\frac{n}{k}.
\end{equation}
\end{lemma}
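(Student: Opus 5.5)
The plan is the standard derivation of the rich-line bound from the Szemer\'edi--Trotter inequality \eqref{eq:ST}. Let \(\mathcal L\) be the family of the \(m\) distinct lines containing at least \(k\) points of \(P\). Counting incidences from below gives \(\mathcal I(P,\mathcal L)\ge km\). Applying \eqref{eq:ST} with \(\#P=n\) and \(\#\mathcal L=m\) gives
\[
km\le C\bigl(n^{2/3}m^{2/3}+n+m\bigr).
\]

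Before absorbing terms, two preliminary remarks. First, \(m\) is finite, since every such line is spanned by two points of \(P\); hence \(m\le\binom n2\). Second, because the constant \(C\) in \eqref{eq:ST} is absolute, the term \(Cm\) cannot be absorbed into \(km\) when \(k\) is small.

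The argument then splits according to the size of \(k\).

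\textbf{Case \(k\le 2C\).} Here it suffices to use the trivial bound \(m\le n^2/2\). Since \(k\) is bounded, \(n^2/k^3\gtrsim n^2\), so \(m\le n^2/2\lesssim n^2/k^3\), which implies \eqref{eq:rich-lines}.

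\textbf{Case \(k>2C\).} Now \(Cm\le km/2\), so this term moves to the left-hand side and leaves
\[
km\le 2C\bigl(n^{2/3}m^{2/3}+n\bigr).
\]
One of the two terms on the right dominates.

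\begin{itemize}
\item If \(n^{2/3}m^{2/3}\ge n\), then \(km\le 4Cn^{2/3}m^{2/3}\). Hence \(m^{1/3}\le 4Cn^{2/3}/k\), that is, \(m\lesssim n^2/k^3\).
\item Otherwise \(km\le 4Cn\), so \(m\lesssim n/k\).
\end{itemize}

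In either subcase \(m\lesssim n^2/k^3+n/k\), which is \eqref{eq:rich-lines}.

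There is no real obstacle. The only point needing care is the absorption of the \(\#\mathcal L\) term, which is handled by the case split at \(k\approx 2C\) together with the trivial bound \(m\le\binom n2\).
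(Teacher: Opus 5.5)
Your proof is correct and follows the same route as the paper. The paper gives no separate proof: it says the rich-line bound is implied by the Szemer\'edi--Trotter inequality \eqref{eq:ST} and cites Tao--Vu, and your lower bound \(km\le\mathcal I(P,\mathcal L)\), together with the case split at \(k\approx 2C\) to absorb the \(\#\mathcal L\) term, is that standard deduction.
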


A simple graph \(\mathcal M\) is called a \emph{matching} if every vertex has
degree at most one. If \(\theta\in V(\mathcal M)\) is paired, denote
its unique partner by \(\theta^*\). Then \((\theta^*)^*=\theta\) and \(\theta^*\neq\theta\). Since \(\theta\) and \(\theta^*\) are distinct one-dimensional
subspaces of \(\mathbb R^2\), we have
\(\mathbb R^2=\theta\oplus\theta^*\).
Hence, for every \(d\in\mathbb R^2\), there is a unique decomposition
\begin{equation}\label{eq:unique-completion}
d=u+v,
\qquad
u\in\theta,
\qquad
v\in\theta^*.
\end{equation}
Consequently, fixing the endpoints of a diagonal and one paired side
direction uniquely determines the decomposition above, and hence the
corresponding parallelogram.


The main result in this subsection is the following.
\begin{lemma}
\label{lem:matched-directions}
Let \(\mathcal M\) be a matching on \(\mathbb{RP}^1\).
Let \(P\subset\mathbb R^2\) be finite and let
\(f:\mathbb R^2\to[0,\infty)\) be supported on \(P\).
Define
\[
 \mathcal R_{\mathcal M}(f)
=\sum_{\substack{(p_1,p_2,p_3,p_4)\in P^4\\
p_1+p_3=p_2+p_4,\ p_i\text{ distinct}\\
\{\dir(p_2-p_1),\dir(p_4-p_1)\}\in E(\mathcal M)}}
\prod_{i=1}^4 f(p_i),
\]
where the vertices are taken in cyclic boundary order. Then there exists an absolute constant \(C>0\) such that
\begin{equation}\label{eq:matched-directions}
\mathcal R_{\mathcal M}(f)
\le
C\log\#P
\left(\sum_{x\in P}f(x)^2\right)^2.
\end{equation}
\end{lemma}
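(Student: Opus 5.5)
The plan is to parametrize each parallelogram by one of its sides together with the matched direction, and to sum over the actual pairs of distinct points of $P$. Write $n=\#P$ and $\mu=\sum_{x\in P}f(x)^2$. For a line $\ell$ put $F(\ell)^2=\sum_{x\in P\cap\ell}f(x)^2$, and for a line $m$ put $k(m)=\#(P\cap m)$.

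Fix the side $\{p_1,p_2\}$ with direction $\theta=\dir(p_2-p_1)$. Since $\mathcal M$ is a matching, the other side direction is forced to be $\theta^*$. Hence $p_4\in\ell_1:=p_1+\theta^*$ and $p_3=p_4+(p_2-p_1)\in\ell_2:=p_2+\theta^*$. The map $p_4\mapsto p_3$ is an injective translation from $\ell_1$ to $\ell_2$, in the spirit of the unique completion \eqref{eq:unique-completion}. Cauchy--Schwarz along this correspondence gives
\[
\mathcal R_{\mathcal M}(f)\le\sum_{\theta\in V(\mathcal M)}\ \sum_{m\parallel\theta}\ \sum_{\substack{p_1\ne p_2\\ p_1,p_2\in P\cap m}} f(p_1)f(p_2)\,F(p_1+\theta^*)\,F(p_2+\theta^*).
\]
Only finitely many $\theta$ and $m$ contribute, namely those spanned by pairs of points of $P$.

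This is a bilinear form on each line $m$, and there are two competing ways to bound it.

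Pure AM--GM, $f(p_1)F(p_2+\theta^*)f(p_2)F(p_1+\theta^*)\le\frac12\bigl(f(p_1)^2F(p_2+\theta^*)^2+f(p_2)^2F(p_1+\theta^*)^2\bigr)$, turns the sum into pairs $(p,q)$ of distinct points weighted by $f(p)^2f(q)^2$. A pair $(p,q)$ is counted once for each corner $p'\in P$ with $p'-p\parallel\theta$ and $q-p'\parallel\theta^*$.

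Cauchy--Schwarz on the line, $\bigl(\sum_{p\in m}f(p)F(p+\theta^*)\bigr)^2\le k(m)\sum_{p\in m}f(p)^2F(p+\theta^*)^2$, instead turns it into pairs $(p,q)$ with $q-p\parallel\theta^*$. Each such pair is weighted by the richness $k$ of the line through $p$ in direction $\theta$, and this line is unique because $\theta$ is determined by $\theta^*$.

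I will therefore decompose dyadically. I sort the lines $m\parallel\theta$ according to $k(m)\sim 2^a$, and the lines $\ell\parallel\theta^*$ according to $k(\ell)\sim 2^b$. For each pair of scales $(a,b)$ I use whichever of the two bounds is favourable, in the form of a weighted Cauchy--Schwarz with weight $(2^a/2^b)^{1/2}$ on one factor and its inverse on the other.

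The counting input is Lemma~\ref{lem:rich-lines}. The number of lines carrying at least $2^a$ points is $\lesssim n^2/2^{3a}+n/2^a$, and the total incidence mass they carry is $\lesssim n^2/2^{2a}+n$. Combining this with $\sum_{\theta}\sum_{m\parallel\theta}F(m)^2\cdot(\text{stuff})\le\mu$-type bounds, I aim to show the following. The contribution of the scale pair $(a,b)$ is $\lesssim 2^{-c|a-b|}\mu^2$, together with a diagonal term $\lesssim\mu^2$ for each $a=b$. Since $a,b\le\log_2 n$, summing over scales gives $C\log\#P\,\mu^2$, which is \eqref{eq:matched-directions}.

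The main obstacle is exactly this diagonal scale-by-scale estimate. It requires that the $f^2$-weighted mass of pairs $(p,q)$ for which both the $\theta$-line and the $\theta^*$-line have richness $\sim 2^a$ is $O(\mu^2)$ uniformly in $a$. The weight $f$ may concentrate on a few points, so the unweighted rich-line count alone does not suffice.

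My first attempt here is to run the argument separately on dyadic level sets $\{f\sim 2^{-j}\}$. I would apply Szemer\'edi--Trotter \eqref{eq:ST}, between the points of one level set and the rich lines of the other, so as to absorb the cross-level interactions into geometric series. The one remaining logarithm would then come only from the richness scale, as in the paper's emphasis that the logarithm should depend on $\#P$ and not on the diameter.
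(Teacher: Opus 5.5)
There is a genuine gap, and it starts with your first reduction. Bounding $\sum_{p_4\in\ell_1}f(p_4)f(p_4+p_2-p_1)$ by $F(\ell_1)F(\ell_2)$ forgets that $p_3$ is the translate of $p_4$. With your definition of $F$, it also keeps the degenerate choices $p_4=p_1$, $p_3=p_2$. The resulting right-hand side is not $O(\mu^2\log\#P)$. Here is a counterexample:
\begin{itemize}
\item Take generic points $p,q,x_1,\dots,x_n$ and match $\theta_i=\dir(x_i-p)$ with $\theta_i^*=\dir(q-x_i)$. These $2n$ directions are distinct, so this is a matching.
\item Set $f(p)=f(q)=1$ and $f(x_i)=n^{-1/2}$. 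Then $\mu=3$ and $\mathcal R_{\mathcal M}(f)=0$, since generic points contain no parallelogram.
\item On the line through $p$ and $x_i$, the pair $(p_1,p_2)=(p,x_i)$ contributes at least $f(p)f(x_i)\cdot f(p)\cdot f(q)=n^{-1/2}$. This is because $p\in p+\theta_i^*$ and $q\in x_i+\theta_i^*$. Summing over $i$ gives at least $\sqrt n$.
\end{itemize}
Every line meets $P$ in at most two points here. So this contradicts your claimed bound $\lesssim 2^{-c|a-b|}\mu^2$ already at the lowest richness scales. Both of your options bound this quantity from above, so no choice between AM--GM and on-line Cauchy--Schwarz can rescue the plan as written.

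Removing $p_1$ from $\ell_1$ and $p_2$ from $\ell_2$ disposes of this example. You are then left with a weighted count, with geometric-mean weights, of configurations where $p_4\in p_1+\theta^*$ and $p_3\in p_2+\theta^*$ but $p_3-p_4=p_2-p_1$ is no longer enforced. You would still have to prove this count is $O(\mu^2\log\#P)$ for concentrated $f$. That is exactly the step you flag as the main obstacle and do not carry out. A further problem is that dyadic value level sets of $f$ can be far more numerous than $\log\#P$.

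The paper avoids both problems.
\begin{itemize}
\item It never replaces a parallelogram by line norms. Its only Cauchy--Schwarz (Lemma~\ref{lem:matched-pattern-reduction}) groups by the side vector $u$ and a line $\ell\parallel\theta^*$. Squaring $\sum_{x\in\ell}g(x)g(x-u)$ then reproduces genuine parallelograms, plus a diagonal term bounded by $\Lambda^2$.
\item It handles the weight by rank blocks rather than value level sets. Sorting by decreasing $f$ gives blocks $S_j$ with $\#S_j\le 2^j$ and a majorant $g\ge f$ with $\sum\rho_j^2\le 3\mu$. So there are only $\lfloor\log_2 N\rfloor+1$ blocks.
\item It measures richness relative to each corner's own block. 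Corners on lines with at least $2^{J_j}$ points of $S_j$ are absorbed through Lemma~\ref{lem:marked-weighted-estimate} and the incidence bound \eqref{eq:rich-total-incidences}.
\item It controls the remaining patterns by counting actual parallelograms, using that three vertices determine the fourth (Lemmas~\ref{lem:matched-counting} and~\ref{lem:matched-rich-counting}). Interpolating these counts gives geometric decay in the differences of block and richness indices.
\end{itemize}
The single logarithm then comes only from summing over a common richness index. Your proposal needs some substitute for this block-relative counting that preserves parallelogram structure.
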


We prove Lemma~\ref{lem:matched-directions} using the auxiliary lemmas below, in which \(\mathcal M\) is fixed throughout.

For the rank-block analysis below, fix pairwise disjoint finite sets
\(S_0,\ldots,S_m\subset\mathbb R^2\), where
\(m\in\mathbb Z_{\ge0}\) and \(\#S_j\le2^j\).
As in Herr--Kwak~\cite[Proposition~3.1]{HerrKwak}, consider
\begin{equation}\label{eq:matched-rank-form}
g=\sum_{j=0}^m\rho_j2^{-j/2}\mathbf1_{S_j},
\qquad
\rho_j\ge0\quad (0\le j\le m),
\qquad
\Lambda=\sum_{j=0}^m\rho_j^2.
\end{equation}
Write \(P=\bigcup_{j=0}^mS_j\),
then \(\sum_{x\in P}g(x)^2\le\Lambda\).

We adapt the classification by line richness in
Herr--Kwak~\cite[Section~3]{HerrKwak}, replacing their orthogonal
direction pairs with pairs of directions matched by \(\mathcal M\).
A \emph{matched cross} is a triple \((x,\ell_1,\ell_2)\), where
\(\ell_1=x+\theta\) and \(\ell_2=x+\theta^*\) for some
\(\{\theta,\theta^*\}\in E(\mathcal M)\).

Fix a sufficiently large absolute integer \(C_0\), and set
\(J_j=\lfloor j/2\rfloor+C_0\).
We classify matched crosses \((x,\ell_1,\ell_2)\) with \(x\in \bigcup_{j=0}^mS_j\)
into three types:
\[
\begin{cases}
\text{Type I},   & a\ge J_j,\\
\text{Type II},  & 1\le a<J_j,\\
\text{Type III}, & a=0,
\end{cases}
\]
where \(j\) is the unique index such that \(x\in S_j\), and
\begin{equation}
a
:=
\left\lfloor
\log_2
\max\bigl\{
\#(\ell_1\cap S_j),\,
\#(\ell_2\cap S_j)
\bigr\}
\right\rfloor
\in\mathbb Z_{\ge0}.
\end{equation}
For an ordered parallelogram \(Q=(p_1,p_2,p_3,p_4)\in P^4\) with four distinct vertices and \(\{\operatorname{dir}(p_2-p_1),\operatorname{dir}(p_4-p_1)\}\in E(\mathcal M)\), we say that the corner at \(p_i\), \(i=1,2,3,4\), is of type \(\alpha\in\{\mathrm{I},\mathrm{II},\mathrm{III}\}\) if the matched cross \(\bigl(p_i,p_i+\operatorname{dir}(p_{i+1}-p_i),p_i+\operatorname{dir}(p_{i-1}-p_i)\bigr)\) is of type \(\alpha\), where the indices are understood modulo four.

For \(\alpha,\beta\in\{\mathrm{II},\mathrm{III}\}\), let \(\mathcal R_{\alpha,\beta}(g)\) denote the contribution to \(\mathcal R_{\mathcal M}(g)\) from ordered parallelograms with corner type pattern \((\alpha,\alpha,\beta,\beta)\) in cyclic order.


To control parallelograms with a type-I corner, we mark the
point--direction pairs corresponding to sufficiently rich side lines.
The following weighted estimate bounds their contribution in terms
of the total marked mass and the full parallelogram energy. This will
allow the type-I contribution to be absorbed into the energy estimate.

For the next lemma only, let \(P\subset\mathbb R^2\) be finite and let
\(f:\mathbb R^2\to[0,\infty)\) be supported on \(P\). Set \(\mu=\sum_{x\in P}f(x)^2.\)
For \(u\in(P-P)\setminus\{0\}\), set
\[
F_u(x)=f(x)f(x+u),
\]
and define
\[
L_u(x,y)
:=
\begin{cases}
\mathbf 1_{\{y-x\parallel(\operatorname{dir}u)^*\}},
& \operatorname{dir}u \text{ is paired in }\mathcal M,\\[2mm]
0,
& \operatorname{dir}u \text{ is unpaired in }\mathcal M .
\end{cases}
\]
By the same argument as before, it is easy to see that \(L_u\) is positive semidefinite. 
We write
\[
\langle F,G\rangle_{L_u}
:=
\sum_{x,y\in P}
F(x)G(y)L_u(x,y).
\]

\begin{lemma}
\label{lem:marked-weighted-estimate}
Let \(\omega:P\times V(\mathcal M)\to[0,1]\)
be a weight, vanishing on unpaired directions and nonzero for only
finitely many directions \(\theta\). Extend \(\omega\) by zero outside
\(V(\mathcal M)\), and define
\begin{align}\label{eq:weight-B-omega}
F_u^\omega(x)
=
\omega(x,\operatorname{dir}u)F_u(x),
\qquad
\mathcal B_\omega(f)
=
\sum_{x\in P}f(x)^2
\sum_{\theta\in V(\mathcal M)}
\omega(x,\theta).    
\end{align}
Then, we have
\begin{equation}\label{eq:marked-weighted-estimate}
\sum_{u\ne0}
\langle F_u^\omega,F_u\rangle_{L_u}
\le
\Bigl[
\bigl(\mathcal R_{\mathcal M}(f)+\mu^2\bigr)
\mu\mathcal B_\omega(f)
\Bigr]^{1/2}.
\end{equation}
\end{lemma}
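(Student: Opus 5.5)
The plan is to apply Cauchy--Schwarz twice with respect to the positive semidefinite forms \(\langle\cdot,\cdot\rangle_{L_u}\), and then bound the two resulting quantities separately.

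\textbf{Step 1 (Cauchy--Schwarz).} For each \(u\ne0\),
\[
\langle F_u^\omega,F_u\rangle_{L_u}\le\langle F_u^\omega,F_u^\omega\rangle_{L_u}^{1/2}\langle F_u,F_u\rangle_{L_u}^{1/2}.
\]
Cauchy--Schwarz in \(u\) then gives
\[
\sum_{u\ne0}\langle F_u^\omega,F_u\rangle_{L_u}\le\Bigl(\sum_u\langle F_u,F_u\rangle_{L_u}\Bigr)^{1/2}\Bigl(\sum_u\langle F_u^\omega,F_u^\omega\rangle_{L_u}\Bigr)^{1/2}.
\]
It therefore suffices to prove
\[
\sum_u\langle F_u,F_u\rangle_{L_u}\le\mathcal R_{\mathcal M}(f)+\mu^2
\qquad\text{and}\qquad
\sum_u\langle F_u^\omega,F_u^\omega\rangle_{L_u}\le\mu\,\mathcal B_\omega(f).
\]

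\textbf{Step 2 (the first factor).} Expand
\[
\sum_u\langle F_u,F_u\rangle_{L_u}=\sum_{u\ne0}\sum_{x,y}f(x)f(x+u)f(y)f(y+u)\,\mathbf 1_{\{y-x\parallel(\dir u)^*\}}.
\]
Write \(v=y-x\).

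If \(v\ne0\), the four points \(x,\,x+u,\,x+u+v,\,x+v\) form a parallelogram. Its side directions \(\dir u\) and \(\dir v=(\dir u)^*\) are matched and distinct. Hence the four vertices are distinct: \(u\) and \(v\) are nonzero and not parallel, so \(u\pm v\ne0\). Taking \(p_1=x\), \(p_2=x+u\), \(p_3=x+u+v\), \(p_4=x+v\) in cyclic order, we get \(p_1+p_3=p_2+p_4\) and \(\{\dir(p_2-p_1),\dir(p_4-p_1)\}\in E(\mathcal M)\). The map \((x,u,v)\mapsto(p_1,\dots,p_4)\) is injective, so these terms contribute at most \(\mathcal R_{\mathcal M}(f)\).

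If \(v=0\), the contribution is \(\sum_{u\ne0}\sum_x f(x)^2f(x+u)^2\le\mu^2\).

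\textbf{Step 3 (the weighted factor, the main point).} Here we must not use the full parallelogram count; the weight \(\omega\) has to be exploited. Fix \(u\) and a line direction, and bound the \(L_u\)-form line by line. Because \(L_u\) is the indicator of lying on a common line \(\ell\parallel(\dir u)^*\),
\[
\langle F_u^\omega,F_u^\omega\rangle_{L_u}=\sum_{\ell\parallel(\dir u)^*}\Bigl(\sum_{x\in\ell}\omega(x,\dir u)f(x)f(x+u)\Bigr)^2.
\]
Apply Cauchy--Schwarz on each line as
\[
\Bigl(\sum_{x\in\ell}\omega f(x)f(x+u)\Bigr)^2\le\Bigl(\sum_{x\in\ell}\omega(x,\dir u)^2f(x)^2\Bigr)\Bigl(\sum_{y\in\ell}f(y+u)^2\Bigr).
\]
Then organize the sum over \(u\) by direction: \(\theta=\dir u\) together with a point \(y+u\) on the translated line \(\ell+u\). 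For fixed \(x\) and \(\theta\), sum over \(u\in\theta\) and \(y\in x+\theta^*\). The pairs \((u,y)\) map injectively to \(y+u\in\R^2\), by the unique decomposition \eqref{eq:unique-completion}. The \(f(y+u)^2\) factors therefore sum to at most \(\mu\). Since \(\omega^2\le\omega\), the total is at most
\[
\sum_x f(x)^2\sum_\theta\omega(x,\theta)\,\mu=\mu\,\mathcal B_\omega(f).
\]

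The obstacle is exactly this regrouping. The line-by-line Cauchy--Schwarz produces \(\sum_{x\in\ell}\) and \(\sum_{y\in\ell}\) for each \(u\), and one must check that summing over all \(u\) with \(\dir u=\theta\) and all lines does not overcount. Rather than splitting the square per line, I will write the form directly as \(\sum_{u}\sum_{x,y\in\ell}\) and apply AM--GM in the form
\[
\omega_x f(x)f(x+u)\,\omega_y f(y)f(y+u)\le\tfrac12\bigl(\omega_x^2f(x)^2f(y+u)^2+\omega_y^2f(y)^2f(x+u)^2\bigr).
\]
By symmetry the two halves are equal, giving
\[
\sum_{x}\omega(x,\theta)^2f(x)^2\sum_{u\in\theta,\;y\in x+\theta^*}f(y+u)^2.
\]
The inner sum is at most \(\mu\), since \((u,y)\mapsto u+y-x\) is injective.

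Combining Steps 1--3 gives \eqref{eq:marked-weighted-estimate}.
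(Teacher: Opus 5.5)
Your proof is correct and follows the paper's argument. It uses the same Cauchy--Schwarz in the form and in $u$, the same identification of $\sum_u\langle F_u,F_u\rangle_{L_u}$ with $\mathcal R_{\mathcal M}(f)$ plus the diagonal $v=0$ term, and the same unique-decomposition injectivity for the weighted factor. Your AM--GM step is just the symmetric version of the paper's Cauchy--Schwarz into $(S_1S_2)^{1/2}$, with $\omega^2\le\omega$ replacing the paper's bound $\omega(y,\theta)\le1$; your line-by-line Cauchy--Schwarz gives exactly the same expression, so the overcounting you worried about does not occur.
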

\begin{proof}
 Splitting the sum into the cases  \(x\neq y\) and \(x=y\), we obtain
\begin{align}\label{eq:matched-completed-form}
\sum_{u\ne0}\langle F_u,F_u\rangle_{L_u}
&=
\mathcal R_{\mathcal M}(f)
+
\sum_{\substack{u\ne0\\
\operatorname{dir}u\ \mathrm{paired}}}
\sum_x f(x)^2f(x+u)^2
\notag\\
&\le
\mathcal R_{\mathcal M}(f)
+
\sum_{x\ne y}f(x)^2f(y)^2
\notag\\
&\le
\mathcal R_{\mathcal M}(f)+\mu^2.
\end{align}

We next show that
\begin{equation}\label{eq:marked-square-bound}
\sum_{u\ne0}
\langle F_u^\omega,F_u^\omega\rangle_{L_u}
\le
\mu\mathcal B_\omega(f).
\end{equation}
Expanding the left hand side and applying the Cauchy--Schwarz
inequality, we obtain
\[
\sum_{u\ne0}
\langle F_u^\omega,F_u^\omega\rangle_{L_u}
\le
(S_1S_2)^{1/2}.
\]
where
\[
S_1
=
\sum_{u,x,y}'
\omega(x,\theta)\omega(y,\theta)
f(x)^2f(y+u)^2,
\]
and
\[
S_2
=
\sum_{u,x,y}'
\omega(x,\theta)\omega(y,\theta)
f(x+u)^2f(y)^2.
\]
where the primed sums are taken over \(u,x,y\) such that
\(u\neq0\), \(\theta=\operatorname{dir}u\) is paired,
\(y-x\parallel\theta^*\), together with the support conditions \(x,x+u,y,y+u\in P\). 

In $S_1$, fixing \(x\), \(z=y+u\), and \(\theta\),
the unique decomposition
property \eqref{eq:unique-completion} leaves at most one pair
\((u,y)\).
Hence, since \(0\le\omega\le1\), we have
\[
S_1
\le
\sum_{x,z,\theta}
f(x)^2f(z)^2\omega(x,\theta)
=
\mu\mathcal B_\omega(f).
\]
The same argument applied to \(S_2\) gives the same bound, proving
\eqref{eq:marked-square-bound}. By the Cauchy--Schwarz inequality and
\eqref{eq:matched-completed-form}, \eqref{eq:marked-square-bound},
we have
\[
\begin{aligned}
\sum_{u\neq0}
\langle F_u^\omega,F_u\rangle_{L_u}
&\le
\left(
\sum_{u\neq0}
\langle F_u^\omega,F_u^\omega\rangle_{L_u}
\right)^{1/2}
\left(
\sum_{u\neq0}
\langle F_u,F_u\rangle_{L_u}
\right)^{1/2}
\\
&\le
\left[
\mu\mathcal B_\omega(f)
\bigl(\mathcal R_{\mathcal M}(f)+\mu^2\bigr)
\right]^{1/2},
\end{aligned}
\]
which is just \eqref{eq:marked-weighted-estimate}.
\end{proof}

We now return to the rank-block function \(g\) and the set
\(P=\bigcup_{j=0}^mS_j\). In the application below, \(F_u\) and
\(F_u^\omega\) are formed with \(f=g\).

For the remaining configurations, we adapt the Cauchy--Schwarz reduction in Herr--Kwak~\cite[Lemma~3.2]{HerrKwak}. Their lemma is formulated for lattice parallelograms and involves an arithmetic divisibility condition. Since \(P\subset\mathbb R^2\) is arbitrary, this divisibility condition is not available in our setting. We therefore use only the geometric part of their argument. More precisely, for each nonzero matched difference \(u\), with
\(\theta=\operatorname{dir}u\), we group the segments
\((x,x-u)\) according to the affine line \(\ell=x+\theta^*\)
and the types of \(x\) and \(x-u\). Applying the Cauchy--Schwarz
inequality, we bound the contribution from parallelograms without
type-I corners by the terms \(\mathcal R_{\alpha,\beta}(g)\),
\(\alpha,\beta\in\{\mathrm{II},\mathrm{III}\}\), together with a
diagonal term bounded by \(\Lambda^2\).

\begin{lemma}
\label{lem:matched-pattern-reduction}
With the notation above, there exists an absolute constant \(C>0\)
such that
\begin{equation}\label{eq:matched-pattern-reduction}
\mathcal R_{\mathcal M}(g)
\le
C\bigl[
\mathcal R_{\mathrm{II},\mathrm{II}}(g)
+
\mathcal R_{\mathrm{III},\mathrm{III}}(g)
+
\mathcal R_{\mathrm{II},\mathrm{III}}(g)
+
\Lambda^2
\bigr].
\end{equation}
\end{lemma}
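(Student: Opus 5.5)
We prove \eqref{eq:matched-pattern-reduction} by splitting \(\mathcal R_{\mathcal M}(g)\) into parallelograms with at least one type-I corner and those without, and treating each part separately. The type-I part will be absorbed into the left-hand side via Lemma~\ref{lem:marked-weighted-estimate}. The remaining part will be reduced to the three pattern sums by a Cauchy--Schwarz argument in the style of Herr--Kwak. Because \((p_1,p_2,p_3,p_4)\) and its cyclic relabellings are counted symmetrically, we may assume the type-I corner, when present, is at \(p_1\), at the cost of a factor \(4\).

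\emph{Step 1 (type-I corners).} For \(x\in S_j\) and \(\theta\in V(\mathcal M)\), set \(\omega(x,\theta)=1\) if the cross \((x,x+\theta,x+\theta^*)\) is of type I, and \(\omega=0\) otherwise. A parallelogram with a type-I corner at \(p_1\), with \(u=p_2-p_1\), is then counted by \(\sum_{u\ne0}\langle F_u^\omega,F_u\rangle_{L_u}\): the fourth vertex lies on \(p_1+(\dir u)^*\), and the unique completion \eqref{eq:unique-completion} holds. Lemma~\ref{lem:marked-weighted-estimate} bounds this by \([(\mathcal R_{\mathcal M}(g)+\Lambda^2)\Lambda\mathcal B_\omega(g)]^{1/2}\).

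The key claim is \(\mathcal B_\omega(g)\le\delta\Lambda\), with \(\delta\to0\) as \(C_0\to\infty\). Since \(g^2=\rho_j^22^{-j}\) on \(S_j\), it suffices to show that the number of incidences between \(S_j\) and lines that are \(2^{J_j}\)-rich in \(S_j\) is at most \(\delta 2^j\).
\begin{itemize}
\item Lemma~\ref{lem:rich-lines}, applied with \(n=2^j\) and \(k\ge2^{J_j}\approx2^{j/2+C_0}\), gives \(n^2/k^3\lesssim 2^{j/2-3C_0}\) lines of richness about \(k\).
\item Summing over dyadic \(k\) bounds the incidences \(k\cdot n^2/k^3\) by \(n^2/k^2\lesssim 2^{j-2C_0}\).
\item For \(k\ge2^{J_j}\) the term \(n/k\) is dominated, because \(n^2/k^3\ge n/k\) exactly when \(k\le n^{1/2}\). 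For larger \(k\), only \(O(1)\) lines can be \(k\)-rich with \(k\gtrsim n^{1/2}\), and summing \(n/k\cdot k\) over the \(O(C_0)\) relevant dyadic levels needs care.
\end{itemize}
Each matched direction pair is counted at most twice per rich line, which gives \(\delta\lesssim 2^{-C_0}\) up to this last issue.

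Given the claim, the type-I contribution is at most \(4\delta^{1/2}(\mathcal R_{\mathcal M}(g)+\Lambda^2)\). Since \(\mathcal R_{\mathcal M}(g)<\infty\) (\(P\) is finite), we can absorb it into the left-hand side once \(4\delta^{1/2}\le1/2\).

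\emph{Step 2 (no type-I corners).} Fix \(u\ne0\) with \(\theta=\dir u\) paired. Group the segments \((x,x-u)\) according to the line \(\ell=x+\theta^*\) and the pair of types \((\alpha,\beta)\in\{\mathrm{II},\mathrm{III}\}^2\) of the corners at \(x\) and \(x-u\); there are only boundedly many type pairs. A parallelogram with sides \(u\) and \(v\parallel\theta^*\) pairs two segments on the same \(\ell\) and the same translate. By Cauchy--Schwarz over the pairs of segments in each group (using \(2ab\le a^2+b^2\)), each mixed-type pairing is bounded by two ``same-type'' pairings. In these, both short sides of the parallelogram carry matching type labels, so after relabelling the corner pattern is \((\alpha,\alpha,\beta,\beta)\) in cyclic order. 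The diagonal terms from Cauchy--Schwarz (identical segments, i.e.\ \(v=0\)) contribute at most \(\sum_{x\ne y}g(x)^2g(y)^2\le\Lambda^2\). Collecting the four pattern classes, and noting that \(\mathcal R_{\mathrm{III},\mathrm{II}}\) is a relabelling of \(\mathcal R_{\mathrm{II},\mathrm{III}}\), gives the bracket in \eqref{eq:matched-pattern-reduction}.

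\emph{Main obstacle.} The delicate point is the bound \(\mathcal B_\omega\le\delta\Lambda\) in Step 1, specifically the linear term \(n/k\) from Lemma~\ref{lem:rich-lines} for very rich lines. I would first try to exploit the fact that a point \(x\in S_j\) lies on at most \(2^j/2^{J_j}\lesssim2^{j/2-C_0}\) rich lines, all in distinct directions. If that fails, I would make \(\omega\) fractional, of size proportional to the line's richness relative to \(2^{J_j}\), so that the contributions from the dyadic levels form a convergent geometric series.
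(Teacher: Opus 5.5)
Your Step~2 is the paper's argument: you group the segments $(x,x-u)$ by the line $x+\theta^*$ and by type pair, apply Cauchy--Schwarz, and bound the diagonal by $\Lambda^2$. The skeleton of Step~1 (mark type-I crosses, apply Lemma~\ref{lem:marked-weighted-estimate}, absorb) is also the paper's, and your symmetric marking of both $\theta$ and $\theta^*$ at a type-I point is harmless.

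The gap is the claim $\mathcal B_\omega(g)\le\delta\Lambda$ with $\delta\to0$ as $C_0\to\infty$, on which your absorption rests. It is false.
\begin{itemize}
\item Let the only nonzero block be $S_j$ ($\rho_j=1$), consisting of $2^j$ points on one line of paired direction, with $j\ge 2C_0$. Then every point is a type-I corner, and your $\omega$ gives $\mathcal B_\omega(g)=2\Lambda$ for every $C_0$.
\item Fractional or adaptive weights do not help. Take two parallel rows of $2^{j-1}$ points in a paired direction $\theta$, offset by a vector in $\theta^*$. All corners of the $\approx 2^{2j}$ parallelograms are of type~I. Any marking that dominates their contribution must give every pair of columns total mark $\gtrsim1$, hence carries $g^2$-mass $\gtrsim\Lambda$.
\item Your incidence count is also off. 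For $k\ge 2^{J_j}>n^{1/2}$ it is the term $n/k$ in \eqref{eq:rich-lines} that dominates. There can be $\sim n/k$ (not $O(1)$) such lines, and there are about $j/2-C_0$ (not $O(C_0)$) dyadic levels. Level-by-level summation therefore gives only $\lesssim j\,2^j$ incidences.
\end{itemize}

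The missing observation is that no smallness is needed. Lemma~\ref{lem:marked-weighted-estimate} yields a geometric mean, and your step $\Lambda\le(\mathcal R_{\mathcal M}(g)+\Lambda^2)^{1/2}$ discards exactly that structure. Suppose only $\mathcal B_\omega(g)\le C\Lambda$ with an absolute constant $C$. Then, by $ab\le(a^2+b^2)/2$, your type-I part is at most
\[
4\sqrt C\,\Lambda\bigl(\mathcal R_{\mathcal M}(g)+\Lambda^2\bigr)^{1/2}
\le\tfrac12\mathcal R_{\mathcal M}(g)+\bigl(8C+\tfrac12\bigr)\Lambda^2 .
\]
Hence $\mathcal R_{\mathcal M}(g)\le 2\mathcal R_{\mathrm{no\,I}}(g)+C'\Lambda^2$, where $\mathcal R_{\mathrm{no\,I}}(g)$ is the part with no type-I corner, which your Step~2 handles.

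The bound $\mathcal B_\omega(g)\lesssim\Lambda$ comes from treating all rich lines at once rather than level by level. Let $\mathcal L_j$ be the family of lines containing at least $2^{J_j}$ points of $S_j$. Lemma~\ref{lem:rich-lines} gives $\#\mathcal L_j\lesssim 2^{2j-3J_j}+2^{j-J_j}\lesssim 2^{j/2}$. A single application of \eqref{eq:ST} to $(S_j,\mathcal L_j)$ then gives
\[
\sum_{\ell\in\mathcal L_j}\#(S_j\cap\ell)\lesssim(2^j)^{2/3}(2^{j/2})^{2/3}+2^j\lesssim 2^j .
\]
Multiplying by $\rho_j^22^{-j}$ and summing over $j$ yields $\mathcal B_\omega(g)\lesssim\Lambda$, which is all the absorption requires.
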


\begin{proof}
Let \(\mathcal L_j\) be the family of affine lines containing at
least \(2^{J_j}\) points of \(S_j\).
Lemma~\ref{lem:rich-lines} gives
\[
\#\mathcal L_j
\lesssim
2^{2j-3J_j}+2^{j-J_j}
\lesssim
2^{j/2}.
\]
By \eqref{eq:ST}, we obtain
\begin{equation}\label{eq:rich-total-incidences}
\sum_{\ell\in\mathcal L_j}\#(S_j\cap\ell)
\lesssim
(\#S_j)^{2/3}(\#\mathcal L_j)^{2/3}
+\#S_j+\#\mathcal L_j
\lesssim 2^j.
\end{equation}

For \(x\in S_j\) and \(\theta\in V(\mathcal M)\), define
\[
\omega(x,\theta)
:=
\begin{cases}
1,&\theta\text{ is paired and }x+\theta\in\mathcal L_j,\\
0,&\text{otherwise}.
\end{cases}
\]
By the definition of \(\mathcal B_\omega\) (see \eqref{eq:weight-B-omega}) and
\eqref{eq:rich-total-incidences}, we have
\begin{equation}\label{eq:B-omega}
\mathcal B_\omega(g)
\le
\sum_{j=0}^m\rho_j^22^{-j}
\sum_{\ell\in\mathcal L_j}\#(S_j\cap\ell)
\lesssim \Lambda.
\end{equation}

Let \(\mathcal R_{\mathrm{no\,I}}(g)\) denote the contribution to
\(\mathcal R_{\mathcal M}(g)\) from ordered parallelograms with no
type-I corner.  If \(x\) is a type-I corner, then one of its incident side lines belongs to \(\mathcal L_j\).  Choosing \(u\) along this side gives
\(\omega(x,\operatorname{dir}u)=1\), and \(L_u(x,y)=1\) for the other adjacent vertex \(y\).
Hence, using Lemma~\ref{lem:marked-weighted-estimate},
\eqref{eq:B-omega}, \(\sum_x g(x)^2\le\Lambda\), and Young's
inequality, we obtain
\[
\mathcal R_{\mathcal M}(g)-\mathcal R_{\mathrm{no\,I}}(g)
\le 8\sum_{u\ne0}\langle F_u^\omega,F_u\rangle_{L_u}
\le C\Lambda\bigl(\mathcal R_{\mathcal M}(g)+\Lambda^2\bigr)^{1/2}
\le \frac12\mathcal R_{\mathcal M}(g)+C'\Lambda^2.
\]
It follows that
\begin{equation}\label{eq:matched-type-I-absorption}
\mathcal R_{\mathcal M}(g)
\le 2\mathcal R_{\mathrm{no\,I}}(g)+C\Lambda^2.
\end{equation}

Fix \(u\in(P-P)\setminus\{0\}\) such that
\(\theta=\dir u\) is paired in \(\mathcal M\).
Once \(u\) and \(x\) are fixed, the incident directions at both
\(x\) and \(x-u\) are \(\theta\) and \(\theta^*\).
Hence their corner types are determined.
For an affine line \(\ell\parallel\theta^*\), set
\[
C_{\alpha\beta}(u,\ell)
:=
\sum_{\substack{x\in\ell,\ x,x-u\in P\\
x\text{ has type }\alpha,\ x-u\text{ has type }\beta}}
g(x)g(x-u),
\qquad
\alpha,\beta\in\{\mathrm{II},\mathrm{III}\},
\]
where the types are taken with respect to \(\theta,\theta^*\).
In the sums below, \(u\) ranges over nonzero differences with paired
direction, and \(\alpha,\beta\) range over
\(\{\mathrm{II},\mathrm{III}\}\).
Applying the Cauchy--Schwarz inequality and  \(\sum_x g(x)^2\le\Lambda\), we obtain
\[
\begin{aligned}
\mathcal R_{\mathrm{no\,I}}(g)
&\le
\sum_u\sum_{\ell\parallel\theta^*}
\left(\sum_{\alpha,\beta}C_{\alpha\beta}(u,\ell)\right)^2
\le
4\sum_u\sum_{\ell\parallel\theta^*}\sum_{\alpha,\beta}
C_{\alpha\beta}(u,\ell)^2\\
&\le
4\left(
\sum_{\alpha,\beta}\mathcal R_{\alpha,\beta}(g)
+
\sum_u\sum_{x\in P}g(x)^2g(x-u)^2
\right)\\
&\le
4\left(
\sum_{\alpha,\beta}\mathcal R_{\alpha,\beta}(g)
+\Lambda^2
\right).
\end{aligned}
\]
Finally, cyclic relabeling gives
\(\mathcal R_{\mathrm{III},\mathrm{II}}(g)
=\mathcal R_{\mathrm{II},\mathrm{III}}(g)\), so
\[
\mathcal R_{\mathrm{no\,I}}(g)
\le4\bigl[
\mathcal R_{\mathrm{II},\mathrm{II}}(g)
+\mathcal R_{\mathrm{III},\mathrm{III}}(g)
+2\mathcal R_{\mathrm{II},\mathrm{III}}(g)
+\Lambda^2\bigr].
\]
Combining this with \eqref{eq:matched-type-I-absorption} proves
\eqref{eq:matched-pattern-reduction}.
\end{proof}

The next two lemmas adapt the rectangle counting estimates of
Herr--Kwak~\cite[Lemma~3.4 and estimates~(3.17)--(3.18) in
Lemma~3.5]{HerrKwak} to matched directions. In their proofs,
orthogonality determines the adjacent side direction. Here the
matching provides a unique distinct partner instead. Once three
vertices are chosen, the parallelogram identity fixes the fourth.
Together with the rich line bound, these properties give the same
counting exponents for arbitrary finite point sets in \(\mathbb R^2\).

Following the richness decomposition of
Herr--Kwak~\cite[Section~3]{HerrKwak}, we organize the
parallelograms by their block and corner-richness indices.

For \(\mathbf j=(j_1,j_2,j_3,j_4)\in\{0,\ldots,m\}^4\) and
\(\mathbf a=(a_1,a_2,a_3,a_4)\in\mathbb Z_{\ge0}^4\), let
\(q(\mathbf j,\mathbf a)\) denote the number of ordered
parallelograms \((p_1,p_2,p_3,p_4)\) with matched side directions
and four distinct vertices \(p_k\in S_{j_k}\) such that
\begin{equation}\label{eq:ja}
2^{a_k}
\le
\max\bigl\{
\#(S_{j_k}\cap\ell_{k,k+1}),
\#(S_{j_k}\cap\ell_{k,k-1})
\bigr\}
<
2^{a_k+1},
\qquad k=1,2,3,4.
\end{equation}
where \(\ell_{ij}\) is the affine line through \(p_i\) and \(p_j\),
\(a_k\) is the richness index of the corner \(p_k\), and the indices
are understood modulo four.
\begin{lemma}
\label{lem:matched-counting}
Let \(0\le j_i\le m\), \(i=1,2,3,4\), and \(a_3\ge0\) be integers.
The number of ordered parallelograms with matched side directions,
four distinct vertices \(p_i\in S_{j_i}\) in cyclic order, and
\(\#(S_{j_3}\cap\ell_{23})<2^{a_3+1}\) is at most
\(C2^{j_1+j_2+a_3}\), where \(C>0\) is an absolute constant.
In particular,
\begin{equation}\label{eq:matched-pair-count}
q(\mathbf j,\mathbf a)\lesssim2^{j_1+j_2+a_3}.
\end{equation}
\end{lemma}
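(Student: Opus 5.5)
Fix the two adjacent vertices \(p_1\in S_{j_1}\) and \(p_2\in S_{j_2}\); there are at most \(\#S_{j_1}\,\#S_{j_2}\le2^{j_1+j_2}\) such ordered pairs. Once \(p_1\ne p_2\) is fixed, the side direction \(\theta=\dir(p_2-p_1)\) is determined. Since the parallelogram has matched side directions, \(\{\theta,\dir(p_4-p_1)\}\in E(\mathcal M)\), so \(\theta\) must be paired. Because \(\mathcal M\) is a matching, the adjacent side direction is forced to be its unique partner \(\theta^*\). This plays the role that orthogonality plays in Herr--Kwak.

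Next consider the side \(p_2p_3\). In a parallelogram \(p_3-p_2=p_4-p_1\), so \(\dir(p_3-p_2)=\theta^*\). Hence \(p_3\) lies on the affine line \(\ell_{23}=p_2+\theta^*\), which is determined by \(p_1,p_2\) alone. The hypothesis \(\#(S_{j_3}\cap\ell_{23})<2^{a_3+1}\) then bounds the number of admissible \(p_3\in S_{j_3}\) by \(2^{a_3+1}\). The fourth vertex is forced by the parallelogram identity, \(p_4=p_1+p_3-p_2\). So each choice of \((p_1,p_2,p_3)\) gives at most one ordered parallelogram, and the total count is at most \(2\cdot2^{j_1+j_2+a_3}\), that is, \(C=2\). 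The remaining constraints (distinctness, \(p_4\in S_{j_4}\), the richness conditions at the other corners) only restrict the count further, so they can be ignored.

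For the consequence \eqref{eq:matched-pair-count}, observe that any parallelogram counted by \(q(\mathbf j,\mathbf a)\) satisfies \eqref{eq:ja} at \(k=3\). In particular \(\#(S_{j_3}\cap\ell_{32})\le\max\{\#(S_{j_3}\cap\ell_{34}),\#(S_{j_3}\cap\ell_{32})\}<2^{a_3+1}\), and \(\ell_{32}=\ell_{23}\). So these parallelograms fall within the class counted in the first part, and \(q(\mathbf j,\mathbf a)\lesssim2^{j_1+j_2+a_3}\).

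I do not expect any real obstacle here. The only care needed is in the indexing. One must check that \(\ell_{23}\) is the line through \(p_3\) along \(\theta^*\); this uses the cyclic-order convention, under which \(p_2p_3\) is parallel to \(p_1p_4\). One must also check that the line depends only on the already fixed pair \((p_1,p_2)\), through the matching partner, so that the richness bound at \(p_3\) can be applied before \(p_3\) is chosen.
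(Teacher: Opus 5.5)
Your proposal is correct and follows the paper's proof essentially verbatim. You fix $(p_1,p_2)$, use the matching to force the partner direction $\theta^*$, place $p_3$ on the line $\ell_{23}=p_2+\theta^*$ with fewer than $2^{a_3+1}$ points of $S_{j_3}$, and close the parallelogram by $p_4=p_1+p_3-p_2$. Your explicit checks are clarifications the paper leaves implicit: that $\ell_{23}$ depends only on $(p_1,p_2)$, and that \eqref{eq:ja} at $k=3$ implies the hypothesis $\#(S_{j_3}\cap\ell_{23})<2^{a_3+1}$.
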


\begin{proof}
There are at most \(2^{j_1+j_2}\) choices of distinct vertices
\((p_1,p_2)\in S_{j_1}\times S_{j_2}\).
Set \(\theta=\operatorname{dir}(p_2-p_1)\).
If \(\theta\) is unpaired, there is no admissible parallelogram.
Otherwise,
\[
p_3\in\ell_{23}=p_2+\theta^*.
\]
By assumption, there are fewer than \(2^{a_3+1}\) choices of \(p_3\),
which then uniquely determines the parallelogram. Hence there are at most \(O\bigl(2^{j_1+j_2+a_3}\bigr)\) such parallelograms. By the definition of \(q(\mathbf j,\mathbf a)\),
this gives \eqref{eq:matched-pair-count}.
\end{proof}

We next use the argument for selecting rich lines from
Herr--Kwak~\cite[Lemma~3.5]{HerrKwak}
for parallelograms with matched directions.
\begin{lemma}
\label{lem:matched-rich-counting}
Let \(0\le j_i\le m\) and \(a_i\ge0\), \(i=1,2,3,4\), be integers.
Assume that \(1\le a_1<J_{j_1}\). Then
\begin{equation}\label{eq:matched-rich-counts1}
q(\mathbf j,\mathbf a)
\lesssim
2^{2j_1-2a_1+a_2+a_4}.
\end{equation}
\begin{equation}\label{eq:matched-rich-counts2}
q(\mathbf j,\mathbf a)
\lesssim
2^{2j_1-2a_1+a_2+a_3}.
\end{equation}
\end{lemma}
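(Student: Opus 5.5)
We follow the rich-line selection of Herr--Kwak~\cite[Lemma~3.5]{HerrKwak}. Fix an ordered parallelogram $(p_1,p_2,p_3,p_4)$ counted by $q(\mathbf j,\mathbf a)$. By \eqref{eq:ja} with $k=1$, both side lines $\ell_{12},\ell_{14}$ through $p_1$ contain fewer than $2^{a_1+1}$ points of $S_{j_1}$. At least one of them, call it $\ell$, contains at least $2^{a_1}$ such points. We then count in three stages: first the pairs $(p_1,\ell)$, then the remaining vertices using the richness indices at $p_2$ and at $p_4$ or $p_3$, and finally we invoke the parallelogram identity. In every stage the direction data are supplied by the matching $\mathcal M$ and not by orthogonality.

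\emph{Step 1: counting pairs $(p_1,\ell)$.} Since $a_1\ge1$, we have $2^{a_1}\ge2$. Lemma~\ref{lem:rich-lines}, applied to $S_{j_1}$ with $n\le2^{j_1}$ and $k=2^{a_1}$, bounds the number of lines containing at least $2^{a_1}$ points of $S_{j_1}$ by $\lesssim 2^{2j_1-3a_1}+2^{j_1-a_1}$. The hypothesis $a_1<J_{j_1}=\lfloor j_1/2\rfloor+C_0$ gives $2a_1\le j_1+2C_0$, that is, $j_1-a_1\le 2j_1-3a_1+2C_0$. Hence the second term is absorbed into the first at the cost of the absolute factor $2^{2C_0}$. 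Each such line carries fewer than $2^{a_1+1}$ points $p_1$ with the prescribed corner richness. So the number of admissible pairs $(p_1,\ell)$ is $\lesssim 2^{2j_1-3a_1}\cdot2^{a_1}=2^{2j_1-2a_1}$.

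\emph{Step 2: first estimate \eqref{eq:matched-rich-counts1}.} The pair $(p_1,\ell)$ fixes $\dir\ell=\theta$. If $\theta$ is unpaired in $\mathcal M$, there is nothing to count. Otherwise the other side line through $p_1$ is $p_1+\theta^*$, so both lines $\ell_{12}$ and $\ell_{14}$ are determined, whichever of them equals $\ell$. The vertex $p_2$ lies on $\ell_{12}=\ell_{21}$, and by \eqref{eq:ja} with $k=2$ this line has fewer than $2^{a_2+1}$ points of $S_{j_2}$. Likewise $p_4$ lies on $\ell_{14}=\ell_{41}$, which has fewer than $2^{a_4+1}$ points of $S_{j_4}$ by \eqref{eq:ja} with $k=4$. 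Finally $p_3=p_2+p_4-p_1$ is forced. Multiplying the counts gives $q(\mathbf j,\mathbf a)\lesssim2^{2j_1-2a_1+a_2+a_4}$.

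\emph{Step 3: second estimate \eqref{eq:matched-rich-counts2}.} With $(p_1,\ell)$ fixed as in Step~1, we again pick $p_2\in\ell_{12}$ in fewer than $2^{a_2+1}$ ways. The line $\ell_{23}$ is the translate through $p_2$ of the direction of $\ell_{14}$, and that direction is already known (it is $\theta$ or $\theta^*$). Hence $\ell_{23}$ is determined, and by \eqref{eq:ja} with $k=3$ it contains fewer than $2^{a_3+1}$ points of $S_{j_3}$. Choosing $p_3$ on it, $p_4=p_1+p_3-p_2$ is forced. This yields $2^{2j_1-2a_1+a_2+a_3}$.

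The only delicate point is Step~1. It needs the lower bound $a_1\ge1$ so that Lemma~\ref{lem:rich-lines} applies with $k\ge2$, and the upper bound $a_1<J_{j_1}$ to discard the $n/k$ term. The remaining steps use only that, once one side direction at $p_1$ is known, the matching $\mathcal M$ determines the other side direction uniquely, and that three vertices determine the fourth via \eqref{eq:unique-completion}.
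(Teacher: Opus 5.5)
Your proof is correct and follows the paper's argument essentially step for step. You count the pairs $(p_1,\ell)$ via the rich-line bound, use the matching to fix the second side line at $p_1$ (up to the two assignments to $\ell_{12},\ell_{14}$), choose $p_2,p_4$ or $p_2,p_3$ on lines of bounded richness, and close the parallelogram; your explicit absorption of the $n/k$ term via $2a_1\le j_1+2C_0$ spells out a step the paper leaves implicit.
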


\begin{proof}
By \eqref{eq:rich-lines}, the number of lines \(\ell\) with
\(2^{a_1}\le\#(S_{j_1}\cap\ell)<2^{a_1+1}\) is at most \(O\bigl(2^{2j_1-3a_1}+2^{j_1-a_1}\bigr)
=
O\bigl(2^{2j_1-3a_1}\bigr),\) and for each such \(\ell\), there are \(O(2^{a_1})\) choices of
\(p_1\in \ell\cap S_{j_1}\).
Consequently, the number of such pairs \((p_1,\ell)\) is
\(O(2^{2j_1-2a_1})\).
Once \(p_1\) and \(\ell\) are fixed, the matching determines the
other incident line through \(p_1\).
There are at most two assignments of these two lines to
\(\ell_{12}\) and \(\ell_{14}\).

To obtain a parallelogram counted by \(q(\mathbf j,\mathbf a)\),
the richness conditions at \(p_2\) and \(p_4\) require
\[
\#(S_{j_2}\cap\ell_{12})<2^{a_2+1},
\qquad
\#(S_{j_4}\cap\ell_{14})<2^{a_4+1}.
\]
Therefore, there are \(O(2^{a_2})\) choices of \(p_2\).
Choosing \(p_4\) next gives \(O(2^{a_4})\) choices, and then
the parallelogram is uniquely determined. Hence, we have
\eqref{eq:matched-rich-counts1}.

On the other hand, after \(p_2\) is chosen, the line through \(p_2\)
parallel to \(\ell_{14}\) is fixed. To obtain a parallelogram counted by
\(q(\mathbf j,\mathbf a)\), we require \(\#(S_{j_3}\cap \ell_{23})<2^{a_3+1}.\)
Hence there are \(O(2^{a_3})\) choices of \(p_3\), which then uniquely determines a parallelogram. It follows that
\eqref{eq:matched-rich-counts2} holds.
\end{proof}

After Lemma~\ref{lem:matched-pattern-reduction} absorbing the contribution from type-I corners, we estimate the remaining patterns by the interpolation and summation argument of Herr--Kwak~\cite[proof of Lemma~3.3, Cases~II--IV]{HerrKwak}.

\begin{lemma} \label{lem:matched-rank-blocks}
For \(g\) as in \eqref{eq:matched-rank-form},
\begin{equation}\label{eq:matched-regular}
\mathcal R_{\mathcal M}(g)\le C(m+1)\Lambda^2,
\end{equation}
where \(C>0\) is an absolute constant.
\end{lemma}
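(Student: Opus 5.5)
By Lemma~\ref{lem:matched-pattern-reduction}, it suffices to show
\[
\mathcal R_{\alpha,\beta}(g)\lesssim(m+1)\Lambda^2
\qquad\text{for }(\alpha,\beta)\in\{(\mathrm{II},\mathrm{II}),(\mathrm{III},\mathrm{III}),(\mathrm{II},\mathrm{III})\}.
\]
We decompose each $\mathcal R_{\alpha,\beta}(g)$ according to the block indices $\mathbf j$ and richness indices $\mathbf a$. Since $g$ equals $\rho_j2^{-j/2}$ on $S_j$,
\[
\mathcal R_{\alpha,\beta}(g)\le\sum_{\mathbf j,\mathbf a}\Bigl(\prod_{k=1}^4\rho_{j_k}2^{-j_k/2}\Bigr)q(\mathbf j,\mathbf a),
\]
where the sum runs over $\mathbf a$ compatible with the pattern. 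That is, $a_k=0$ at type-III corners and $1\le a_k<J_{j_k}$ at type-II corners. The task is then, following Herr--Kwak's Cases II--IV, to bound $q(\mathbf j,\mathbf a)$ by a geometric mean of the counts in Lemmas~\ref{lem:matched-counting} and~\ref{lem:matched-rich-counting}, in a form that sums to $(m+1)\Lambda^2$.

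\textbf{Pattern $(\mathrm{III},\mathrm{III})$.} All $a_k=0$. Lemma~\ref{lem:matched-counting}, applied with each pair of adjacent vertices as the starting pair (by cyclic relabeling), gives
\[
q\lesssim\min\{2^{j_1+j_2},\,2^{j_2+j_3},\,2^{j_3+j_4},\,2^{j_4+j_1}\}.
\]
The geometric mean of the two opposite bounds $2^{j_1+j_2}$ and $2^{j_3+j_4}$ gives $q\lesssim2^{(j_1+j_2+j_3+j_4)/2}$, which only yields $(\sum_j\rho_j)^4$. That is too weak, so I would instead take $q\lesssim 2^{\min(j_1,j_3)+\min(j_2,j_4)}$, valid by choosing adjacent pairs appropriately. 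Then
\[
\prod_k\rho_{j_k}2^{-j_k/2}\,q\le\rho_{j_1}\rho_{j_3}2^{-|j_1-j_3|/2}\cdot\rho_{j_2}\rho_{j_4}2^{-|j_2-j_4|/2},
\]
and Schur's test on the kernel $2^{-|i-i'|/2}$ gives a bound $\lesssim\Lambda^2$, with no loss in $m$.

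\textbf{Pattern $(\mathrm{II},\mathrm{II})$.} Corners $p_1,p_2$ have type II with $1\le a_1<J_{j_1}$ and $1\le a_2<J_{j_2}$. Interpolating \eqref{eq:matched-rich-counts1}, \eqref{eq:matched-rich-counts2} and \eqref{eq:matched-pair-count}, and using the symmetric versions centred at $p_2$, I expect to obtain a bound of the shape
\[
q\lesssim 2^{(j_1+j_2)/2+\ldots}\,2^{-c(a_1+a_2)}\cdots,
\]
in which the richness indices cancel in pairs or leave a geometric decay. The summation over the $a_k$ then contributes at most $\#\{a:1\le a<J_j\}\lesssim j+1\le m+1$ for one of the indices, while the remaining indices sum geometrically. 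This is precisely where the factor $m+1$ comes from.

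\textbf{Pattern $(\mathrm{II},\mathrm{III})$.} This mixed case is handled analogously, combining \eqref{eq:matched-rich-counts1} at the type-II corners with Lemma~\ref{lem:matched-counting} at the type-III corners, which have $a_k=0$.

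The main obstacle is the exact choice of interpolation exponents in the $(\mathrm{II},\mathrm{II})$ case, so that the block sums close via Schur/Cauchy--Schwarz and only one unsummed richness index remains, giving $(m+1)$ rather than $(m+1)^2$. The bound $a_k<J_{j_k}\approx j_k/2$ is what ensures $2^{2j_1-3a_1}$ dominates $2^{j_1-a_1}$, so it is the regime where the rich-line count is effective. The geometric inputs are identical to Herr--Kwak's; only uniqueness of completion, which the matching provides, was used there. I would therefore transcribe their exponent bookkeeping verbatim with $q(\mathbf j,\mathbf a)$ in place of their rectangle counts.
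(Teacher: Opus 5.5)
Your reduction through Lemma~\ref{lem:matched-pattern-reduction}, the decomposition by $(\mathbf j,\mathbf a)$, and your treatment of $(\mathrm{III},\mathrm{III})$ agree with the paper. \textbf{The gap is the all-type-II pattern.} This is the real content of the lemma, and you leave it as an expectation; the shape you expect is also not attainable.

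Recall that $\mathcal R_{\mathrm{II},\mathrm{II}}$ is the pattern in which all four corners are of type II. So the rich-line bounds $A_{k,l}=2j_k-2a_k+a_{k+1}+a_{k+1+l}$ ($k=1,\dots,4$, $l=1,2$) are available at every corner, not only at $p_1,p_2$. Each $A_{k,l}$ has net $\mathbf a$-coefficient $0$, while each pair count $j_k+j_{k+1}+a_{k+2}$ has net coefficient $+1$. Consequently:
\begin{enumerate}
\item no weighted geometric mean of these inputs decays like $2^{-c(a_1+a_2)}$;
\item any positive weight on a pair count makes the exponent grow along the diagonal $a_1=\dots=a_4$;
\item to put weight $\frac12$ on each $j_i$ you must use all four corners.
\end{enumerate}
What has to be proved is the simultaneous estimate
\[
q(\mathbf j,\mathbf a)\lesssim 2^{\frac12\sum_i j_i-\varepsilon\sum_i(|j_i-j_{i+1}|+|a_i-a_{i+1}|)} .
\]
Decay in the $a$-differences alone leaves a $\mathbf j$-sum of size up to $(\sum_j\rho_j)^4\le(m+1)^2\Lambda^2$. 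Decay in $\mathbf j$ alone leaves up to $\prod_iJ_{j_i}\sim(m+1)^4$ admissible $\mathbf a$.

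The paper obtains this estimate from two explicit geometric means of the eight bounds $A_{k,l}$:
\begin{enumerate}
\item with weights $\frac1{24}(2,3;3,4;0,6;3,3)$ the $a_k$ cancel exactly, and the exponent is $\frac12\sum_ij_i-\frac1{12}(j_1-j_2)$;
\item with weights $\frac1{12}(1,2;1,2;3,0;1,2)$ the $j$-part is exactly $\frac12\sum_ij_i$, and the exponent is $\frac12\sum_ij_i+\frac16(a_1-a_2)$.
\end{enumerate}
Cyclic and reversed relabelings give both signs for every adjacent pair, and averaging the resulting eight bounds yields $\varepsilon=1/96$. Only with this in hand does the one free richness index give a single factor $m+1$ while the $\mathbf j$-sum closes with $\Lambda^2$.

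\textbf{The mixed pattern $(\mathrm{II},\mathrm{II},\mathrm{III},\mathrm{III})$ is likewise only asserted.} For a fixed-$\mathbf a$ argument, the ingredient you name is the wrong one. \eqref{eq:matched-rich-counts1} at $p_1$ gives $2^{2j_1-2a_1+a_2}$ in either orientation. Every geometric mean of such bounds with the pair counts that decays in both $a_1$ and $a_2$ then has exponent strictly larger than $\frac12\sum_ij_i$ when $j_1=j_2>j_3=j_4$.

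The paper instead uses the versions $2^{2j_1-2a_1}$ and $2^{2j_2-2a_2}$ of \eqref{eq:matched-rich-counts2}. It takes the weighted geometric mean, with weights $\frac25$ and $\frac35$, of $2^{j_1+j_2-a_1-a_2}$ and $2^{j_3+j_4+(a_1+a_2)/2}$, which gives $2^{\frac25(j_1+j_2)+\frac35(j_3+j_4)-\frac1{10}(a_1+a_2)}$. It combines this with the $\mathbf a$-uniform count $2^{\min\{j_1+j_4,j_2+j_3,j_1+j_2\}}$ for the union over $a_1,a_2$.

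For the bound as stated, that union count alone would suffice. Indeed $\min\{j_1+j_4,j_2+j_3\}=\frac12\sum_ij_i-\frac12|(j_1+j_4)-(j_2+j_3)|$, and Schur's test together with $\|\rho*\rho\|_{\ell^2}^2\le\|\rho\|_{\ell^1}^2\Lambda\le(m+1)\Lambda^2$ closes. But this requires applying Lemma~\ref{lem:matched-counting} to the union over $a_1,a_2$, not for each fixed $\mathbf a$.

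Finally, transcribing Herr--Kwak verbatim is not by itself a proof. You would have to check that each count they use has a matched-direction analogue, in both orientations, among Lemmas~\ref{lem:matched-counting} and~\ref{lem:matched-rich-counting}. That verification is exactly what the paper's proof carries out.
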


\begin{proof}
By Lemma~\ref{lem:matched-pattern-reduction}, it suffices to bound
\(\mathcal R_{\mathrm{II},\mathrm{II}}(g)\),
\(\mathcal R_{\mathrm{III},\mathrm{III}}(g)\), and
\(\mathcal R_{\mathrm{II},\mathrm{III}}(g)\).
Extend \(\rho_j\) by zero outside \(0\le j\le m\).
For every \(\varepsilon>0\), it is easy to see that
\begin{equation}\label{eq:discrete-kernel-sum}
\sum_{j,k\in\mathbb Z}\rho_j\rho_k2^{-\varepsilon|j-k|}
\le C_\varepsilon\Lambda.
\end{equation}
Throughout the proof, let \(j_1,\ldots,j_4\in\{0,\ldots,m\}\), with cyclic indexing modulo \(4\), so that \(j_{k+4}=j_k\).

\emph{Four type-II corners.}
Let \(a_1,\ldots,a_4\) be integers with
\(1\le a_i<J_{j_i}\), \(i=1,2,3,4\).
With the indices understood modulo four, define
\[
A_{k,l}=2j_k-2a_k+a_{k+1}+a_{k+1+l},
\qquad k=1,\ldots,4,\quad l=1,2.
\]
By Lemma~\ref{lem:matched-rich-counting}, we have
\(q(\mathbf j,\mathbf a)\lesssim 2^{A_{k,l}}\) for every
\(k=1,\ldots,4\) and \(l=1,2\).
Taking weighted geometric means of the bounds above with coefficient arrays
\[
\frac1{24}
\begin{pmatrix}
2&3\\
3&4\\
0&6\\
3&3
\end{pmatrix},
\qquad
\frac1{12}
\begin{pmatrix}
1&2\\
1&2\\
3&0\\
1&2
\end{pmatrix},
\]
whose entries are nonnegative and sum to one, we obtain
\[
\begin{aligned}
q(\mathbf j,\mathbf a)
&\lesssim
2^{\frac12\sum_i j_i-\frac1{12}(j_1-j_2)},\\
q(\mathbf j,\mathbf a)
&\lesssim
2^{\frac12\sum_i j_i+\frac16(a_1-a_2)}.
\end{aligned}
\]
Cyclic and reversed cyclic relabelings give both sign choices for each
adjacent difference. Taking the stronger estimate for each sign pair,
and then geometrically averaging the resulting eight bounds, gives
\begin{equation}\label{eq:matched-interpolation}
q(\mathbf j,\mathbf a)
\lesssim
2^{\frac12\sum_i j_i
-\varepsilon\sum_{i=1}^4
\bigl(|j_i-j_{i+1}|+|a_i-a_{i+1}|\bigr)},
\qquad
\varepsilon=\frac1{96}.
\end{equation}

By \eqref{eq:matched-interpolation} and
\eqref{eq:discrete-kernel-sum},
\[
\begin{aligned}
\mathcal R_{\mathrm{II},\mathrm{II}}(g)
&\le
\sum_{\substack{0\le j_i\le m\\
1\le a_i<J_{j_i}\\
i=1,2,3,4}}
q(\mathbf j,\mathbf a)
\prod_{i=1}^4\rho_{j_i}2^{-j_i/2}\\
&\lesssim
\sum_{\substack{0\le j_i\le m\\
1\le a_i<J_{j_i}\\
i=1,2,3,4}}
2^{-\varepsilon\sum_{i=1}^4
\bigl(|j_i-j_{i+1}|+|a_i-a_{i+1}|\bigr)}
\prod_{i=1}^4\rho_{j_i}\\
&\lesssim
\left(
\sum_{j,k\in\mathbb Z}
\rho_j\rho_k2^{-\varepsilon|j-k|}
\right)^2
\sum_{\substack{1\le a_i\le m/2+C_0\\
i=1,2,3,4}}
2^{-\varepsilon
\bigl(|a_1-a_2|+|a_2-a_3|+|a_3-a_4|\bigr)}\\
&\lesssim
(m+1)
\left(
\sum_{j,k\in\mathbb Z}
\rho_j\rho_k2^{-\varepsilon|j-k|}
\right)^2
\lesssim
(m+1)\Lambda^2.
\end{aligned}
\]

\emph{Four type-III corners.}
Here \(a_i=0\) for all \(i\).
Let \(q_{\mathbf j}=q(\mathbf j,(0,0,0,0))\), by \eqref{eq:matched-pair-count}, we have
\begin{equation}\label{eq:mixed-all-three}
q_{\mathbf j}\lesssim2^{\min_i(j_i+j_{i+1})}
=2^{\frac12\sum_i j_i
-\frac12\bigl(|j_1-j_3|+|j_2-j_4|\bigr)}.
\end{equation}
It follows from \eqref{eq:mixed-all-three} and \eqref{eq:discrete-kernel-sum} that
\[
\begin{aligned}
\mathcal R_{\mathrm{III},\mathrm{III}}(g)
&\le
\sum_{0\le j_i\le m}
q_{\mathbf j}\,
2^{-\frac12(j_1+j_2+j_3+j_4)}
\rho_{j_1}\rho_{j_2}\rho_{j_3}\rho_{j_4}\\
&\lesssim
\sum_{0\le j_i\le m}
2^{-\frac12|j_1-j_3|}
\rho_{j_1}\rho_{j_3}\,
2^{-\frac12|j_2-j_4|}
\rho_{j_2}\rho_{j_4}\\
&\lesssim
\Lambda^2.
\end{aligned}
\]

\emph{For the pattern \(\mathrm{II},\mathrm{II},\mathrm{III},\mathrm{III}.\)}
set
\[
q_{\mathbf j}
:=
\sum_{\substack{
1\le a_1<J_{j_1}\\
1\le a_2<J_{j_2}
}}
q\bigl(\mathbf j,(a_1,a_2,0,0)\bigr).
\]
Applying Lemma~\ref{lem:matched-counting}, together with its cyclic and
reversed cyclic versions, directly to the union over all admissible
\(a_1\) and \(a_2\), without fixing them individually, we obtain
\begin{equation}\label{eq:mixed-total-pair-counts}
q_{\mathbf j}
\lesssim2^{\min\{j_1+j_4,j_2+j_3,j_1+j_2\}}.
\end{equation}

For \(\mathbf a=(a_1,a_2,0,0)\), with
\(1\le a_i<J_{j_i}\) for \(i=1,2\),
by \eqref{eq:matched-pair-count} and its reversed cyclic versions, we also have
\begin{equation}\label{eq:mixed-count-a}
q(\mathbf j,\mathbf a)
\lesssim
2^{j_3+j_4+\min\{a_1,a_2\}}
\le
2^{j_3+j_4+(a_1+a_2)/2}.
\end{equation}
The corresponding versions of \eqref{eq:matched-rich-counts2} give
\begin{equation}\label{eq:mixed-count-b}
q(\mathbf j,\mathbf a)
\lesssim\min\{2^{2j_1-2a_1},2^{2j_2-2a_2}\}
\le2^{j_1+j_2-a_1-a_2}.
\end{equation}

Interpolating \eqref{eq:mixed-count-a} and \eqref{eq:mixed-count-b}, we have
\[
q(\mathbf j,\mathbf a)
\lesssim2^{\frac25(j_1+j_2)+\frac35(j_3+j_4)
-\frac1{10}(a_1+a_2)},
\]
which implies
\begin{align} \label{eq:mixed-interpolated-count}
q_{\mathbf j}
&=
\sum_{\substack{
1\le a_1<J_{j_1}\\
1\le a_2<J_{j_2}
}}
q\bigl(\mathbf j,(a_1,a_2,0,0)\bigr)
\notag\\
&\lesssim
\sum_{a_1,a_2\ge1}
2^{\frac25(j_1+j_2)+\frac35(j_3+j_4)-\frac1{10}(a_1+a_2)}
\notag\\
&\lesssim
2^{\frac25(j_1+j_2)+\frac35(j_3+j_4)}.
\end{align}
Combining \eqref{eq:mixed-total-pair-counts} , \eqref{eq:mixed-interpolated-count} and the inequality 
\[
\begin{aligned}
&\min\left\{
j_1+j_4,\,
j_2+j_3,\,
j_1+j_2,\,
\frac25(j_1+j_2)+\frac35(j_3+j_4)
\right\}
-\frac12\sum_{i=1}^4j_i\\
&\hspace{3cm}
\le
-\frac1{10}\bigl(|j_1-j_3|+|j_2-j_4|\bigr),
\end{aligned}
\]
we have
\[
q_{\mathbf j}
\lesssim
2^{\frac12\sum_{i=1}^4j_i
-\frac1{10}\bigl(|j_1-j_3|+|j_2-j_4|\bigr)}.
\]
Therefore,
\[
\begin{aligned}
\mathcal R_{\mathrm{II},\mathrm{III}}(g)
&\le
\sum_{0\le j_i\le m}
q_{\mathbf j}\,
2^{-\frac12\sum_{i=1}^4j_i}
\prod_{i=1}^4\rho_{j_i}\\
&\lesssim
\sum_{0\le j_i\le m}
2^{-\frac1{10}\bigl(|j_1-j_3|+|j_2-j_4|\bigr)}
\prod_{i=1}^4\rho_{j_i}\\
&=
\left(
\sum_{0\le j_1,j_3\le m}
\rho_{j_1}\rho_{j_3}2^{-|j_1-j_3|/10}
\right)
\left(
\sum_{0\le j_2,j_4\le m}
\rho_{j_2}\rho_{j_4}2^{-|j_2-j_4|/10}
\right)\\
&\lesssim
\Lambda^2.
\end{aligned}
\]
Combining the three estimates above with
Lemma~\ref{lem:matched-pattern-reduction}, we complete the proof of
\eqref{eq:matched-regular}.
\end{proof}

We now complete the proof of Lemma~\ref{lem:matched-directions}.
Following Herr--Kwak~\cite[proof of Theorem~1.2]{HerrKwak},
we order the support by decreasing values of \(f\) and divide it
into dyadic blocks.
Their argument requires each point to lie on at most one sufficiently
rich line, which is achieved by iteratively removing exceptional points.
Lemma~\ref{lem:matched-rank-blocks} requires no such assumption.
Hence this iterative removal is unnecessary here.

\begin{proof}[Proof of Lemma~\ref{lem:matched-directions}]

The case \(f=0\) is immediate. Otherwise, let
\(N=\#(\operatorname{supp}f)\le\#P\) and enumerate the
support so that
\[
f(x_1)\ge f(x_2)\ge\cdots\ge f(x_N)>0.
\]
For \(0\le j\le m=\lfloor\log_2N\rfloor\), set
\[
S_j=\{x_k:2^j\le k<2^{j+1},\ k\le N\},
\qquad
\rho_j=2^{j/2}f(x_{2^j}),
\]
and define
\[
g=\sum_{j=0}^m\rho_j2^{-j/2}\mathbf1_{S_j}.
\]
The blocks are disjoint, \(\#S_j\le2^j\), and \(f\le g\).
For \(j\ge1\), by the monotonicity of \(f(x_k)\), we have
\[
\rho_j^2=2^j f(x_{2^j})^2
\le2\sum_{k=2^{j-1}}^{2^j-1}f(x_k)^2.
\]
Including \(\rho_0^2=f(x_1)^2\), we obtain
\[
\sum_{j=0}^m\rho_j^2\le3\sum_{x\in P}f(x)^2,
\qquad
m+1\lesssim\log\#P.
\]
It follows from Lemma~\ref{lem:matched-rank-blocks} that
\[
\mathcal R_{\mathcal M}(f)
\le
\mathcal R_{\mathcal M}(g)
\lesssim
(m+1)\left(\sum_{j=0}^m\rho_j^2\right)^2
\lesssim
\log\#P
\left(\sum_{x\in P}f(x)^2\right)^2.
\]
This proves \eqref{eq:matched-directions}.
\end{proof}

\subsection{Bounded-degree graphs and fixed length scales}

A \emph{proper edge coloring} colors the edges of a finite graph so that
any two edges sharing a vertex have different colors.
Hence, the edges of any fixed color form a matching.
If the graph has maximum degree at most \(d\ge1\), then its edges
can be colored with at most \(2\lfloor d\rfloor-1\le2d\)
colors. Indeed, for each edge, at most \(2(\lfloor d\rfloor-1)\) colors are already used by edges sharing one of its endpoints, so at least one color
remains available.

\begin{lemma}\label{lem:weighted-graph}
Let \(\alpha\in I\), and let \(\mathcal G\) have maximum degree at most
\(d\ge1\). If \(f_1,f_2,f_3,f_4\ge0\) are supported in a finite
nonempty set \(S\subset\mathbb Z^2\), then
\begin{equation}\label{eq:weighted-graph}
 \mathcal T_\cG(f_1,f_2,f_3,f_4)
 \le Cd\log\#S\prod_{i=1}^4\norm{f_i}_{\ell^2}.
\end{equation}
\end{lemma}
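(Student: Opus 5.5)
The plan is to reduce to a diagonal estimate via Lemma~\ref{lem:graph-holder}, and then prove the diagonal estimate by splitting $\mathcal G$ into matchings and applying Lemma~\ref{lem:matched-directions}. By Lemma~\ref{lem:graph-holder},
\[
\mathcal T_\cG(f_1,f_2,f_3,f_4)\le 2\prod_{i=1}^4 H_\cG(f_i)^{1/4},
\qquad H_\cG(f)=\mathcal E_\cG(f)+d\norm{f}_{\ell^2}^4 .
\]
So it suffices to prove, for every nonnegative $f$ supported in $S$,
\begin{equation*}
\mathcal E_\cG(f)\le C d\log\#S\,\norm{f}_{\ell^2}^4 .
\end{equation*}
Indeed, this gives $H_\cG(f_i)\le (C+1)d\log\#S\,\norm{f_i}_{\ell^2}^4$, using $\log\#S=\log(2+\#S)\ge\log 2$. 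Taking fourth roots and the product then yields \eqref{eq:weighted-graph}.

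First I will make the graph finite. Only directions in $D=\{\dir(Th):h\in(S-S)\setminus\{0\}\}$ can contribute to $\mathcal E_\cG(f)$, and $D$ is finite. I therefore replace $\cG$ by its induced subgraph on $V(\cG)\cap D$. This does not change $\mathcal E_\cG(f)$, and the maximum degree is still at most $d$. By the edge-coloring remark preceding the lemma, the edges of this finite graph split into at most $2d$ color classes. Each class is a matching $\mathcal M_c$, so
\[
\mathcal E_\cG(f)=\sum_c\mathcal E_{\mathcal M_c}(f).
\]

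Next I fix a matching $\mathcal M$ and compare $\mathcal E_{\mathcal M}(f)$ with $\mathcal R_{\mathcal M}$. Put $P=T(S)\subset\R^2$ and $F(Tx)=f(x)$; this is well defined since $T$ is injective, and $\#P=\#S$. A term $(x,u,v)$ of $\mathcal E_{\mathcal M}(f)$ has $u,v\ne0$ and $\{\dir(Tu),\dir(Tv)\}\in E(\mathcal M)$. I send it to the quadruple
\[
(p_1,p_2,p_3,p_4)=(Tx,\,T(x+u),\,T(x+u+v),\,T(x+v)).
\]
This satisfies $p_1+p_3=p_2+p_4$, with $p_2-p_1=Tu$ and $p_4-p_1=Tv$. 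An edge joins two distinct directions, so $Tu$ and $Tv$ are nonzero and nonparallel, and hence the four vertices are distinct. The map $(x,u,v)\mapsto(p_1,\dots,p_4)$ is injective. The weights agree, since $f(x)f(x+u)f(x+v)f(x+u+v)=\prod_i F(p_i)$. Finally $0\le K\le1$. Together these give $\mathcal E_{\mathcal M}(f)\le\mathcal R_{\mathcal M}(F)$.

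Lemma~\ref{lem:matched-directions} now gives
\[
\mathcal R_{\mathcal M}(F)\le C\log\#P\Bigl(\sum_{P}F^2\Bigr)^2=C\log\#S\,\norm{f}_{\ell^2}^4 .
\]
Summing over the at most $2d$ colors proves the claimed bound on $\mathcal E_\cG(f)$. I expect no step to be a real obstacle, since the incidence work is already contained in Lemma~\ref{lem:matched-directions}. The only points needing care are the bookkeeping ones: the edge coloring must be applied to the finite restricted graph, and the vertices must be taken in cyclic boundary order with distinct entries, as Lemma~\ref{lem:matched-directions} requires. Both hold because $\cG$ has no loops.
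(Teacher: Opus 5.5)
Your proposal is correct and matches the paper's proof step for step. Like the paper, you restrict $\mathcal G$ to the finite set of directions realized by $T(S-S)\setminus\{0\}$, edge-color it into at most $2d$ matchings, bound each $\mathcal E_{\mathcal M}(f)$ by $\mathcal R_{\mathcal M}$ of the transported function on $P=TS$ using $0\le K\le 1$, apply Lemma~\ref{lem:matched-directions}, and finish with Lemma~\ref{lem:graph-holder}; your explicit checks of distinct vertices, cyclic order, and $\log\#S\ge\log 2$ only spell out details the paper leaves implicit.
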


\begin{proof}
Let \(f\) be supported in \(S\). Only directions in the following finite set
can appear in a nonzero term of \(\mathcal E_{\mathcal G}(f)\):
\[
\{\operatorname{dir}(T(x-y)):x,y\in S,\ x\ne y\}
.\]
Restrict the graph to these
vertices. By the edge coloring argument above, its edge set can be partitioned into at most \(\lfloor 2d\rfloor\) matchings; if there are no edges the assertion
below is immediate.

For each matching \(\mathcal M\), set \(P=TS\) and define
\(\widetilde f(Tx)=f(x)\) for \(x\in S\).
Since \(T\) is invertible,
\[
\#P=\#S,
\qquad
\sum_{p\in P}\widetilde f(p)^2
=
\sum_{x\in S}f(x)^2.
\]
For each \(\mathcal M\), the \(T\)-image of a parallelogram \((x,x+u,x+u+v,x+v)\)
has adjacent side directions \(\operatorname{dir}(Tu)\) and
\(\operatorname{dir}(Tv)\). Since \(0\le K\le1\),
Lemma~\ref{lem:matched-directions} gives
\[
\mathcal E_{\mathcal M}(f)
\le
\mathcal R_{\mathcal M}(\widetilde f)
\lesssim
\log\#S\|f\|_{\ell^2}^4.
\]
Summing over at most \(2d\) matchings, we obtain
\[
\mathcal E_{\mathcal G}(f)
\lesssim
d\log\#S\|f\|_{\ell^2}^4.
\]
Hence
\[
H_{\mathcal G}(f)
=
\mathcal E_{\mathcal G}(f)+d\|f\|_{\ell^2}^4
\lesssim
d\log\#S\|f\|_{\ell^2}^4.
\]
Applying Lemma~\ref{lem:graph-holder} to \(f_1,\ldots,f_4\)
completes the proof.
\end{proof}

We next use the elementary identity
\begin{equation}\label{eq:determinant-quantization}
\det(Tu,Tv)=\sqrt{\alpha}\det(u,v),\qquad
\det(u,v)=u_1v_2-u_2v_1\in\mathbb Z.
\end{equation}
Thus every nonzero transformed determinant has absolute value at least
\(\sqrt{\alpha_-}\).

\begin{lemma}\label{lem:fixed-scale-directions}
Fix $\alpha\in I$ and $R\ge \rho\ge \lambda$.
Call an ordered pair $(u,v)$ of nonzero vectors in $\mathbb Z^2$
admissible if
$\operatorname{dir}(Tu)\ne \operatorname{dir}(Tv)$ and
\begin{equation}
R\le |Tu|<2R,
\qquad
\rho\le |Tv|<2\rho,
\qquad
|B(u,v)|\le \eta.
\end{equation}
Let $G_{R,\rho}$ be the simple undirected graph on the
unoriented directions in the transformed plane with edge set
\[
E(G_{R,\rho})
=
\Bigl\{
\{\operatorname{dir}(Tu),\operatorname{dir}(Tv)\}
:
(u,v)\text{ is admissible}
\Bigr\}.
\]
Then $G_{R,\rho}$ has maximum degree at most
\[
C_I\left(1+\frac{R}{\rho}\right).
\]
\end{lemma}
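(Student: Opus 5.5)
Fix a direction $\theta$ in the transformed plane and bound its neighbors in $G_{R,\rho}$. An edge at $\theta$ comes from an admissible pair $(u,v)$ with either $\dir(Tu)=\theta$ or $\dir(Tv)=\theta$. By symmetry of the roles (up to swapping $R$ and $\rho$), it suffices to treat two cases: $\theta$ carries the long side, or $\theta$ carries the short side. In each case we bound the number of distinct directions $\dir(Tw)$ of the other side. We first choose a representative: if $\theta$ contains no nonzero point of $T\Z^2$, it has no neighbors. Otherwise $\theta=\dir(Te)$ for a primitive $e\in\Z^2$, and every lattice vector with transformed direction $\theta$ is an integer multiple of $e$.

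\emph{Case 1: $\theta=\dir(Tu)$ with $R\le|Tu|<2R$.} Write $u=ke$. We count the possible $v$ with $\rho\le|Tv|<2\rho$ and $|B(u,v)|\le\eta$. The condition $|Tu\cdot Tv|\le\eta$ forces the component of $Tv$ along $\theta$ to be at most $\eta/|Tu|\le\eta/R$. Meanwhile $\det(Tu,Tv)=\sqrt\alpha\det(u,v)=\sqrt\alpha\,k\det(e,v)$ by \eqref{eq:determinant-quantization}. Hence the component of $Tv$ orthogonal to $\theta$ equals $\sqrt\alpha\,k\det(e,v)/|Tu|=\sqrt\alpha\det(e,v)/|Te|$. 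So the lattice vectors $v$ lie on the lines $\det(e,v)=m$ with $m\in\Z$. These lines are spaced $\sqrt\alpha/|Te|$ apart in the transformed plane, and $|Tv|<2\rho$ restricts $|m|\lesssim_I\rho|Te|$.

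On each such line, consecutive lattice points differ by $Te$, so their components along $\theta$ differ by $|Te|$. The constraint that this component lies in a window of width $2\eta/R\le 2\eta/\lambda<|Te|$ (since $|Te|\ge\lambda$ and $\eta\ll\lambda^2$) allows at most one $v$ per line. Thus the number of $v$, and hence of neighbor directions, is $\lesssim_I 1+\rho|Te|$. This is useless if $|Te|$ is large, so we also use $|Te|\le|Tu|/|k|\le 2R$. More directly, the direction of $Tv$ is determined up to an angle $\lesssim\eta/(R\rho)$ from $\theta^\perp$. The distinct directions $\dir(Tv)$ with $v$ of length $\sim\rho$ are separated by angle $\gtrsim\sqrt{\alpha_-}/\rho^2$, because $|\det(Tv,Tv')|\ge\sqrt{\alpha_-}$ when the directions differ. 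So the count is $\lesssim_I 1+(\eta/(R\rho))\rho^2=1+\eta\rho/R\lesssim 1+R/\rho$ since $\rho\le R$.

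\emph{Case 2: $\theta=\dir(Tv)$ with $\rho\le|Tv|<2\rho$.} The same argument applies with roles exchanged. The neighbor directions $\dir(Tu)$ lie within angle $\lesssim\eta/(R\rho)$ of $\theta^\perp$ (because $|\cos\angle|\le\eta/(|Tu||Tv|)$). Distinct such directions, represented by vectors of length $\sim R$, are separated by angle $\ge\sqrt{\alpha_-}/(4R^2)$. This gives at most $1+C\eta R^2/(R\rho)\lesssim_I 1+R/\rho$ neighbors.

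The main obstacle, and the step to check carefully, is the angular separation. For non-parallel lattice vectors $w,w'$ with $|Tw|,|Tw'|<2R$, we have $\sin\angle(Tw,Tw')=|\det(Tw,Tw')|/(|Tw||Tw'|)\ge\sqrt{\alpha_-}/(4R^2)$, using \eqref{eq:determinant-quantization}. A packing count of directions inside an arc of length $\lesssim\eta/(R\rho)$ around $\theta^\perp$ then gives $\lesssim 1+\eta R^2/(R\rho\sqrt{\alpha_-})$ directions, which is $\lesssim_I 1+R/\rho$ in Case 2 and $\lesssim_I 1$ in Case 1. Adding the two cases gives maximum degree $\le C_I(1+R/\rho)$. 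One must also make sure that angles near $\pi/2$ are handled with $\sin$ close to $1$, so that the arc estimate $|\angle-\pi/2|\lesssim\eta/(|Tu||Tv|)$ is valid; this holds since the cosine is at most $10^{-6}$.
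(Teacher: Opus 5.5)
Your proposal is correct and follows essentially the paper's route. You split according to whether $\theta$ is the $R$-scale or the $\rho$-scale direction, confine the partner directions to an arc of length $O(\eta/(R\rho))$ about $\theta^\perp$, and separate distinct directions via $\det(Tu,Tv)=\sqrt\alpha\det(u,v)$; your Case~2 matches the paper's argument. The only variation is Case~1. There the paper shows directly that $|\det(Tv,Tv')|\le 4\eta\rho/R<\sqrt{\alpha_-}$ forces a unique partner direction, while you run the same packing count at scale $\rho$ to get $1+O(\eta\rho/(R\sqrt{\alpha_-}))=O_I(1)$. Your preliminary lattice-line computation is unnecessary and can be dropped.
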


\begin{proof}
To distinguish the two length scales, take two copies of the
unoriented direction space, one for the \(R\)-scale and one for the
\(\rho\)-scale. Each edge joins a direction in the \(R\)-scale copy
to a direction in the \(\rho\)-scale copy.

Fix a direction \(\theta\) in the \(R\)-scale copy, and let
\(e\) be a unit vector in \(\theta\). For every admissible pair
\((u,v)\) with \(\operatorname{dir}(Tu)=\theta\), we have
\[
 |e\cdot Tv|\le\eta/R,\qquad |Tv|<2\rho.
\]
Let \((u,v)\) and \((u',v')\) be two admissible pairs with \(\operatorname{dir}(Tu)=\operatorname{dir}(Tu')=\theta\). Let \(e^\perp\) be a unit vector perpendicular to \(e\). It is easy to see that
\[
\begin{aligned}
 |\det(Tv,Tv')|
 &=\bigl|(e\cdot Tv)(e^\perp\cdot Tv')
        -(e^\perp\cdot Tv)(e\cdot Tv')\bigr|\\
 &\le\frac{4\eta\rho}{R}
 \le4\eta<\sqrt{\alpha_-},
\end{aligned}
\]
where we used $\rho\le R$ and the definition of $\eta$.
By \eqref{eq:determinant-quantization}, this determinant must be zero.
Hence \(\operatorname{dir}(Tv)=\operatorname{dir}(Tv')\).
Thus there is at most one direction in the \(\rho\)-scale copy adjacent to \(\theta\).

Now fix a direction \(\phi\) in the \(\rho\)-scale copy, and let
\(e\) be a unit vector in \(\phi\). For every admissible pair
\((u,v)\) with \(\operatorname{dir}(Tv)=\phi\), we have
\[
\left|e\cdot\frac{Tu}{|Tu|}\right|
\le\frac{\eta}{R\rho}.
\]
Since the directions are unoriented, we understand angles modulo
\(\pi\). Consequently, \(\operatorname{dir}(Tu)\) lies in an angular
interval of length at most \(C\eta/(R\rho)\) centered at
\(\operatorname{dir}(e^\perp)\), the interval is very short because
$\eta/(R\rho)\le\eta/\lambda^2\le10^{-6}$.
Let \((u,v)\) and \((u',v')\) be two admissible pairs with
\(\operatorname{dir}(Tv)=\operatorname{dir}(Tv')=\phi\) and
\(\operatorname{dir}(Tu)\ne\operatorname{dir}(Tu')\). 
Since \(|Tu|,|Tu'|<2R\), by \eqref{eq:determinant-quantization} we have
\[
\left|\sin\angle(Tu,Tu')\right|
=\frac{|\det(Tu,Tu')|}{|Tu||Tu'|}
\ge\frac{\sqrt{\alpha_-}}{4R^2}.
\]
Since \(|\sin t|\le |t|\), the angular separation of \(\operatorname{dir}(Tu)\) and \(\operatorname{dir}(Tu')\) is at least \(\sqrt{\alpha_-}/(4R^2)\).
An interval of length $A$ contains at most $1+A/\delta$ points
separated by at least $\delta$. Therefore, the number of \(R\)-scale directions is at most
\[
1+\frac{C\eta/(R\rho)}{\sqrt{\alpha_-}/(4R^2)}
\le C_I\left(1+\frac{R}{\rho}\right).
\]

The geometric intuition behind the two cases is illustrated in
Figure~\ref{fig:fixed-scale-directions}.
\begin{figure}[H]
\centering
\resizebox{0.85\linewidth}{!}{%
\begin{tikzpicture}[
    x=1cm,y=1cm,>=latex,
    line width=0.45pt,line cap=round,line join=round,
    font=\footnotesize,
    every node/.style={inner sep=2pt},
    title/.style={font=\footnotesize},
    guide/.style={black!40,line width=0.3pt},
    vector/.style={->,black,line width=0.6pt},
    hypothetical/.style={black!65,dashed,line width=0.5pt},
    dim/.style={<->,black!65,line width=0.4pt}
]

\node[title] at (0.90,5.05)
    {(a) Fixed $R$-scale direction $\theta$};
\coordinate (Oa) at (0,0);
\coordinate (V) at (0.12,3.15);
\coordinate (Vp) at (-0.27,2.25);

\fill[black!4] (-0.34,0) rectangle (0.34,3.5);
\draw[guide,dashed] (-0.34,0)--(-0.34,3.5);
\draw[guide,dashed] (0.34,0)--(0.34,3.5);
\fill[black!12] (Oa)--(V)--(Vp)--cycle;
\draw[guide,densely dotted,->] (0,0)--(0,3.75);
\node[anchor=west] at (0.52,3.72) {$e^\perp$};
\draw[->] (-1.45,0)--(3.25,0)
    node[below right] {$e\parallel\theta$};
\draw[guide] (0.16,0)--(0.16,0.16)--(0,0.16);

\draw[vector] (Oa)--(V);
\draw[hypothetical,->] (Oa)--(Vp);
\draw[hypothetical] (Vp)--(V);
\fill (V) circle[radius=1.5pt];
\draw[hypothetical,fill=white] (Vp) circle[radius=1.7pt];
\node[anchor=west] at (0.40,3.12) {$Tv$};
\node[anchor=east] at (-0.48,2.25) {$Tv'$};
\fill (Oa) circle[radius=1.4pt];
\node[below left] at (Oa) {$O$};

\draw[guide] (-0.34,3.5)--(-0.34,4.05);
\draw[guide] (0.34,3.5)--(0.34,4.05);
\draw[dim] (-0.34,3.98)--(0.34,3.98)
    node[midway,above=3pt] {$2\eta/R$};
\draw[guide] (0.34,0)--(1.85,0);
\draw[guide] (0,3.5)--(1.85,3.5);
\draw[dim] (1.72,0)--(1.72,3.5)
    node[midway,right=3pt] {$2\rho$};

\node at (0.90,-0.78)
    {$\displaystyle
      |\triangle O\,Tv\,Tv'|
      \le 2\eta\rho/R< \sqrt{\alpha_-}/2$};

\begin{scope}[shift={(7.15,0)}]
    \node[title] at (0.90,5.05)
        {(b) Fixed $\rho$-scale direction $\phi$};
    \coordinate (Ob) at (0,0);
    \coordinate (P1) at (75:2.60);
    \coordinate (P2) at (85:3.20);
    \coordinate (P3) at (100:2.60);
    \coordinate (P4) at (105:3.10);

    \fill[black!4] (Ob)--(72:3.5)
        arc[start angle=72,end angle=108,radius=3.5]--cycle;
    \fill[black!14] (Ob)--(P1)--(P2)--cycle;
    \fill[black!8] (Ob)--(P2)--(P3)--cycle;
    \fill[black!14] (Ob)--(P3)--(P4)--cycle;
    \draw[guide] (Ob)--(72:3.5)
        arc[start angle=72,end angle=108,radius=3.5]--cycle;
    \draw[guide,densely dotted,->] (0,0)--(0,3.75);
    \node[anchor=west] at (0.52,3.72) {$e^\perp$};
    \draw[->] (-1.45,0)--(3.25,0)
        node[below right] {$e\parallel\phi$};
    \draw[guide] (0.16,0)--(0.16,0.16)--(0,0.16);
    \draw (P1)--(P2)--(P3)--(P4);
    \foreach \p in {P1,P2,P3,P4}{
        \draw[vector] (Ob)--(\p);
        \fill (\p) circle[radius=1.5pt];
    }
    \node[anchor=west,xshift=6pt,yshift=2pt] at (P1) {$P_1$};
    \node[anchor=west,xshift=4pt] at (P2) {$P_2$};
    \node[anchor=north west,yshift=-2pt] at (P3) {$P_3$};
    \node[anchor=east,xshift=-6pt,yshift=2pt] at (P4) {$P_4$};
    \fill (Ob) circle[radius=1.4pt];
    \node[below left] at (Ob) {$O$};

    \draw[guide] (72:3.58)--(72:4.10);
    \draw[guide] (108:3.58)--(108:4.10);
    \draw[dim] (72:3.98)
        arc[start angle=72,end angle=108,radius=3.98]
        node[midway,above=3pt] {$\beta\le C\eta/(R\rho)$};
    \draw[guide] (0,0)--(1.85,0);
    \draw[guide] (0,3.5)--(1.85,3.5);
    \draw[dim] (1.72,0)--(1.72,3.5)
        node[midway,right=3pt] {$2R$};

    \node at (0.90,-0.78)
        {$\displaystyle
          P_i=Tu_i,\qquad
          |\triangle OP_iP_{i+1}|\ge \sqrt{\alpha_-}/2$};
\end{scope}
\end{tikzpicture}%
}
\caption{Geometry for Lemma~\ref{lem:fixed-scale-directions}
(schematic).
(a) A fixed $R$-scale direction leaves at most one \(\rho\)-scale
partner direction: the hypothetical triangle would have area below the lattice
minimum. (b) For a fixed $\rho$-scale direction, the \(R\)-scale partner directions lie in a short angular interval with a minimum separation.
Representatives are chosen in the upper half-plane;
$|\triangle|$ denotes area.}
\label{fig:fixed-scale-directions}
\end{figure}

Finally, identify the \(R\)-scale and \(\rho\)-scale copies with the
original unoriented direction space. For any direction \(\theta\),
every direction adjacent to \(\theta\) in \(\mathcal G_{R,\rho}\)
arises from one of the two copies. Hence, by the two estimates above,
\[
\deg_{\mathcal G_{R,\rho}}(\theta)
\le C_I\left(1+\frac{R}{\rho}\right).
\]
This completes the proof of Lemma~\ref{lem:fixed-scale-directions}.
\end{proof}

\subsection{Summation over long point pairs}
\label{sec:aspect}

For $s\ge1$, let $\mathcal R_s(f)$ be the part of
$\cE_\alpha^*(f)$ with
\[
 s\le\frac{|Tv|}{|Tu|}<2s.
\]
We call the ratio of the longer transformed side length \(|Tv|\)
to the shorter transformed side length \(|Tu|\) the \emph{aspect ratio}. The localization argument below allows the dyadic side-length scales
to be summed without introducing a factor depending on the diameter
of the support.

\begin{proposition}\label{prop:aspect}
Let $f\ge0$ be supported in a finite nonempty set $S\subset\Z^2$.
For every $s\ge1$,
\begin{equation}\label{eq:aspect-bound}
 \mathcal R_s(f)
 \le C_Is\log\#SJ(f).
\end{equation}
\end{proposition}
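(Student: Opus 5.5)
The plan is to decompose $\mathcal R_s(f)$ dyadically in the length of the long side and to localize spatially at that scale. The key point is that both endpoints of the long side are then \emph{distinct} points of $\supp f$ at a controlled distance, so the output is a long-pair mass rather than $\mu(f)^2$. Summing these masses over the dyadic scales reconstructs $J(f)$ with bounded overlap, and no factor depending on the diameter of $S$ appears.

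\emph{Fixed scales.} Fix a dyadic scale $R\ge\lambda$. Consider the terms of $\mathcal R_s(f)$ with $R\le|Tv|<2R$ and $|Tu|\ge\lambda$, $|B(u,v)|<\eta$. Since $s\le|Tv|/|Tu|<2s$, we have $|Tu|\in(R/(2s),2R/s)$, so $|Tu|$ lies in at most two dyadic ranges $[\rho,2\rho)$ with $\rho\sim R/s$. Because $\rho\le R$, Lemma~\ref{lem:fixed-scale-directions} (applied to $(v,u)$) produces graphs $G_{R,\rho}$ of maximum degree at most $C_I(1+R/\rho)\le C_Is$. The positive-weight terms of $\mathcal R_s(f)$ at this scale are therefore dominated by $\mathcal T_{G_{R,\rho}}$, summed over these at most two values of $\rho$. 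Applying Lemma~\ref{lem:weighted-graph} directly would give $Cs\log\#S\,\mu(f)^2$ per scale, and summing over $R$ would cost a factor $\log\diam$. To avoid this we localize first.

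\emph{Localization at scale $R$.} Tile the transformed plane by half-open squares $Q$ of side $R/10$, and set $f_Q=f\1_{\{Tx\in Q\}}$. For a parallelogram $(x,x+u,x+u+v,x+v)$ at this scale, let $Q_1,Q_2,Q_3,Q_4$ be the cells containing $Tx,\,T(x+u),\,T(x+v),\,T(x+u+v)$.
\begin{itemize}
\item The short side satisfies $|Tu|<2R/s\le 2R$, so $Q_2$ lies within $O(1)$ cells of $Q_1$, and likewise $Q_4$ lies within $O(1)$ cells of $Q_3$.
\item The long side satisfies $|Tv|\ge R>\diam Q$, so $Q_1\ne Q_3$ and $Q_2\ne Q_4$.
\item All four cells lie within distance $O(R)$ of one another.
\end{itemize}
Hence the scale-$R$ contribution is bounded by
\[
\sum_{(Q_1,Q_2,Q_3,Q_4)}\mathcal T_{G_{R,\rho}}(f_{Q_1},f_{Q_2},f_{Q_3},f_{Q_4}),
\]
where the sum runs over an $O(1)$-bounded configuration family: for fixed $Q_1$ there are $O(1)$ admissible choices of $(Q_2,Q_3,Q_4)$. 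Lemma~\ref{lem:weighted-graph} applies to each term, since all $f_{Q_i}$ are supported in $S$. By AM--GM,
\[
\prod_i\norm{f_{Q_i}}_{\ell^2}\le\tfrac12\bigl(\mu(f_{Q_1})\mu(f_{Q_3})+\mu(f_{Q_2})\mu(f_{Q_4})\bigr),
\]
so the scale-$R$ contribution is at most
\[
C_Is\log\#S\sum_{\substack{Q\ne Q'\\ \operatorname{dist}(Q,Q')\lesssim R}}\mu(f_Q)\mu(f_{Q'})
\;\le\; C_Is\log\#S\sum_{\substack{x\ne y\\ cR\le|T(x-y)|\le CR}}f(x)^2f(y)^2 .
\]
The inequality uses that points in distinct cells are distinct and lie at transformed distance at most $CR$. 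The lower cutoff $cR$ does not come for free from distinct cells, since points in neighbouring cells can be arbitrarily close.

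\emph{Summation over scales, and the main obstacle.} If the lower cutoff $|T(x-y)|\ge cR$ is secured, then every pair $x\ne y$ lies in $O(1)$ windows $[cR,CR]$ over dyadic $R$. Summing over $R$ then gives $C_Is\log\#S\,J(f)$, as claimed.

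The main obstacle is exactly this lower bound on the separation of the two pairs $(Q_1,Q_3)$ and $(Q_2,Q_4)$. My first attempt would be to take the cell side to be $R/100$ and to group the four vertices so that the bookkeeping pairs are $(x,x+v)$ and $(x+u,x+u+v)$. These pairs sit at transformed distance $|Tv|\ge R$, so the cells containing them are at distance $\ge R/2$. The remaining check is that the AM--GM pairing in Lemma~\ref{lem:weighted-graph}'s output, $\norm{f_1}\norm{f_3}$ against $\norm{f_2}\norm{f_4}$, matches these long pairs, which I expect it does. If it does not, I would instead average over $O(1)$ shifted grids, or replace sharp cells by the smooth windows $\Phi_{R,z}$ and integrate over the centers as in Lemma~\ref{lem:exact-mass}. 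In either case the pair-mass bound should be compared with a kernel supported on $|T(x-y)|\sim R$.
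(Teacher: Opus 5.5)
Your proposal is correct and is essentially the paper's proof: a dyadic decomposition in the long side, the degree bound $C_Is$ from Lemma~\ref{lem:fixed-scale-directions}, a tiling by squares of side comparable to $R$ (the paper uses $R/16$), Lemma~\ref{lem:weighted-graph} applied to each compatible quadruple, the AM--GM pairing of $(Q_1,Q_3)$ with $(Q_2,Q_4)$, and bounded overlap of the windows $|T(x-y)|\sim R$ across dyadic scales. The obstacle you flag is not real: with your labeling, $(Q_1,Q_3)$ and $(Q_2,Q_4)$ already contain the long sides $(x,x+v)$ and $(x+u,x+u+v)$, and $\prod_i\norm{f_{Q_i}}_{\ell^2}$ is symmetric, so the pairing is free. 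Compatibility then gives $R-2\diam Q\le|T(x'-y')|<2R+2\diam Q$ for all $Tx'\in Q_1$ and $Ty'\in Q_3$. You should keep the sum over these compatible long pairs rather than relaxing it to all pairs $Q\ne Q'$, and no shifted grids, finer cells, or smooth windows are needed.
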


\begin{proof}
Split the summation defining \(\mathcal R_s(f)\) by the dyadic scale of the longer transformed side \(|Tv|\), namely \(R\le |Tv|<2R\), where \(R=\lambda2^j\), \(j\ge0\). Within this range, the shorter transformed side satisfies
\[
\frac{R}{2s}<|Tu|<\frac{2R}{s}.
\]
This interval intersects only a bounded number of dyadic intervals $\rho\le|Tu|<2\rho$, where $\rho=\lambda2^k$, $k\ge0$.
For the boundedly many relevant values of \(\rho\), let
\(\mathcal G_{R,\rho}\) be the graphs from
Lemma~\ref{lem:fixed-scale-directions}. Their union, denoted by
\(\mathcal G_{R,s}\), has maximum degree at most \(C_I s\).

Partition the transformed plane into half open squares of side
$r=R/16$, namely
$[k_1r,(k_1+1)r)\times[k_2r,(k_2+1)r)$ for
$(k_1,k_2)\in\Z^2$. For each square \(Q\), define
\[
f_Q(x)=f(x)\mathbf 1_Q(Tx),
\qquad
\mu_Q=\|f_Q\|_{\ell^2}^2,
\]
and let \(c(Q)\) denote its center. A quadruple of squares \((Q_1,Q_2,Q_3,Q_4)\) is called \emph{compatible}
if there exists at least one parallelogram contributing to the current
\((R,s)\)-scale sum, with \(Tx,T(x+u),T(x+v),T(x+u+v)\) lying in
\(Q_1,Q_2,Q_3,Q_4\), respectively.

For a compatible quadruple \((Q_1,Q_2,Q_3,Q_4)\), since \(R\le |Tv|<2R\) and the distance from any point in a square
to its center is at most \(r/\sqrt2\), both pairs
\((Q_1,Q_3)\) and \((Q_2,Q_4)\) satisfy
\begin{equation}\label{eq:long-square-pair}
 R-\sqrt2r\le|c(Q)-c(Q')|\le2R+\sqrt2r.
\end{equation}
The bound \(|Tu|<2R\) then implies that both pairs
\((Q_1,Q_2)\) and \((Q_3,Q_4)\) satisfy
\begin{equation}\label{eq:short-square-pair}
|c(Q)-c(Q')|<2R+\sqrt2\,r.
\end{equation}
Hence, once \(Q_1\) and \(Q_3\) are fixed, the centers \(c(Q_2)\) and \(c(Q_4)\) lie in disks of radius \(2R+\sqrt2\,r\) centered at \(c(Q_1)\) and \(c(Q_3)\), respectively. Since $r=R/16$, \eqref{eq:short-square-pair} leaves $O(1)$ choices of
$(Q_2,Q_4)$ for each fixed $(Q_1,Q_3)$, and conversely.

Let \(\mathcal C_{R,s}(Q_1,Q_2,Q_3,Q_4)\) denote the sum of the
contributions of all parallelograms in the current \((R,s)\)-scale
class whose transformed vertices lie in \(Q_1,Q_2,Q_3,Q_4\),
respectively. Hence, by dropping the length restrictions and applying
Lemma~\ref{lem:weighted-graph}, we have
\begin{align}\label{eq:scale-pair-bound}
&\sum_{\substack{
x,u,v\in\mathbb Z^2,\ u,v\ne0\\
R\le |Tv|<2R\\
s\le |Tv|/|Tu|<2s}}
f(x)f(x+u)f(x+v)f(x+u+v)K(B_\alpha(u,v))
\notag\\
&\qquad=
\sum_{\substack{
(Q_1,Q_2,Q_3,Q_4)\\
\mathrm{compatible}}}
\mathcal C_{R,s}(Q_1,Q_2,Q_3,Q_4)
\notag\\
&\qquad\le
\sum_{\substack{
(Q_1,Q_2,Q_3,Q_4)\\
\mathrm{compatible}}}
\mathcal T_{\mathcal G_{R,s}}
(f_{Q_1},f_{Q_2},f_{Q_3},f_{Q_4})
\notag\\
&\qquad\le
C_I s\log\#S
\sum_{\substack{
(Q_1,Q_2,Q_3,Q_4)\\
\mathrm{compatible}}}
\sqrt{\mu_{Q_1}\mu_{Q_2}\mu_{Q_3}\mu_{Q_4}}
\notag\\
&\qquad\le
\frac{C_I s}{2}\log\#S
\sum_{\substack{
(Q_1,Q_2,Q_3,Q_4)\\
\mathrm{compatible}}}
\bigl(
\mu_{Q_1}\mu_{Q_3}
+
\mu_{Q_2}\mu_{Q_4}
\bigr)
\notag\\
&\qquad\le
C_I s\log\#S
\sum_{\substack{
(Q,Q')\\
\text{satisfying }\eqref{eq:long-square-pair}}}
\mu_Q\mu_{Q'}.
\end{align}

For \(Tx\in Q\) and \(Ty\in Q'\), with \((Q,Q')\) satisfying
\eqref{eq:long-square-pair}, it is easy to see that
\[
\left(1-\frac{\sqrt2}{8}\right)R
\le |T(x-y)|
\le
\left(2+\frac{\sqrt2}{8}\right)R.
\]
In particular, \(x\ne y\).
For a fixed distinct ordered pair \((x,y)\), any scale \(R\) contributing
to the last sum in \eqref{eq:scale-pair-bound} must satisfy
\[
\frac{|T(x-y)|}{2+\sqrt2/8}
\le R\le
\frac{|T(x-y)|}{1-\sqrt2/8}.
\] Since \(R\) takes only the dyadic values \(\lambda2^j\), \(j\ge0\),
the number of such values lying in this interval is bounded by an absolute constant. Thus
\[
\sum_{R=\lambda2^j}
\sum_{\substack{(Q,Q')\\
\text{satisfying }\eqref{eq:long-square-pair}}}
\mu_Q\mu_{Q'}
\le
C\sum_{x\ne y}f(x)^2f(y)^2
=
CJ(f).
\]
Summing \eqref{eq:scale-pair-bound} proves
\eqref{eq:aspect-bound}.
\end{proof}

For $u,v\ne0$, write the symmetric aspect ratio as
\[
 \sigma(u,v)=\frac{\max\{|Tu|,|Tv|\}}{\min\{|Tu|,|Tv|\}}.
\]
The next two bounds will control moderate aspect ratios and the
coverage error for large aspect ratios, respectively.

\begin{corollary}\label{cor:aspect-sums}
Let $f\ge0$ be supported in a finite nonempty set $S$, and let
$\tau\ge2$. The contribution to $\cE_\alpha^*(f)$ with
$\sigma(u,v)\le\tau$ is at most
\begin{equation}\label{eq:moderate-bound}
 C_I\tau\log\#SJ(f).
\end{equation}
Also,
\begin{equation}\label{eq:aspect-error}
 \sum_{\substack{x,u,v\in\Z^2\\u,v\ne0\\\sigma(u,v)>\tau}}
 \frac{f(x)f(x+u)f(x+v)f(x+u+v)K(B(u,v))}{\sigma(u,v)^2}
 \le\frac{C_I}{\tau}\log\#SJ(f).
\end{equation}
\end{corollary}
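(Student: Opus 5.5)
The plan is to reduce both bounds to Proposition~\ref{prop:aspect} by a dyadic decomposition in the aspect ratio, using the symmetry of $\cE_\alpha^*$ under swapping $u$ and $v$. The summand $f(x)f(x+u)f(x+v)f(x+u+v)K(B(u,v))$ is invariant under $(u,v)\mapsto(v,u)$, since $B$ is symmetric and the four vertices are merely relabeled. Hence the part of $\cE_\alpha^*(f)$ with $|Tv|\ge|Tu|$ dominates the part with $|Tu|>|Tv|$. Consequently, any sum over $u,v\ne0$ of a nonnegative weight depending only on $\sigma(u,v)$ is at most twice the corresponding sum restricted to $|Tv|\ge|Tu|$, where $\sigma(u,v)=|Tv|/|Tu|$. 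On this restricted region, the ratio $|Tv|/|Tu|\in[1,\infty)$ is partitioned by the dyadic windows $[2^k,2^{k+1})$, $k\ge0$, and these are exactly the ranges defining $\mathcal R_{2^k}(f)$.

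For \eqref{eq:moderate-bound}, I will bound the contribution with $\sigma\le\tau$ by
\[
2\sum_{0\le k\le\log_2\tau}\mathcal R_{2^k}(f)\le 2C_I\log\#S\,J(f)\sum_{2^k\le\tau}2^k\le 4C_I\tau\log\#S\,J(f),
\]
using Proposition~\ref{prop:aspect} with $s=2^k$. The geometric sum is dominated by its largest term, so the result is linear in $\tau$ and not $\tau\log\tau$.

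For \eqref{eq:aspect-error}, on the window $2^k\le\sigma<2^{k+1}$ with $\sigma>\tau$ I will bound the weight $\sigma^{-2}$ by $4^{-k}$ and sum over those $k$ with $2^{k+1}>\tau$. By Proposition~\ref{prop:aspect} each window contributes at most $C_I2^k4^{-k}\log\#S\,J(f)$, giving a total of
\[
2C_I\log\#S\,J(f)\sum_{2^{k+1}>\tau}2^{-k}\le 8C_I\tau^{-1}\log\#S\,J(f),
\]
after enlarging $C_I$.

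The argument has no real obstacle; its only delicate points are bookkeeping. First, the swap symmetry must be justified as a change of variables that preserves the summation domain $u,v\ne0$, so that the ordered sum over $|Tu|>|Tv|$ equals the sum over $|Tv|>|Tu|$ and the constant $2$ is legitimate. Second, the dyadic windows must cover $\sigma\ge1$ without overlap, and the ties $|Tu|=|Tv|$ must be counted only once; this is harmless since every term is nonnegative. The essential input is that Proposition~\ref{prop:aspect} is linear in $s$, which makes both dyadic series geometric.
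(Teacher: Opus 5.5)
Your proposal is correct and follows the paper's proof essentially verbatim. Like the paper, you reduce to $|Tv|\ge|Tu|$ via the $u\leftrightarrow v$ symmetry, split into dyadic aspect-ratio classes, apply Proposition~\ref{prop:aspect} with $s=2^k$, and sum the geometric series $\sum_{2^k\le\tau}2^k\lesssim\tau$ and $\sum_{2^k>\tau/2}2^{-k}\lesssim\tau^{-1}$.
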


\begin{proof}
By symmetry in \(u\) and \(v\), it suffices, up to an absolute factor, to consider \(|Tv|\ge |Tu|.\)  For dyadic $s=2^k\ge1$, Proposition~\ref{prop:aspect} bounds the
class $s\le|Tv|/|Tu|<2s$ by
$C_Is\log\#SJ(f)$.
The classes meeting $\sigma\le\tau$ have $s\le\tau$, and
$\sum_{s=2^k\le\tau}s\le2\tau$, which proves
\eqref{eq:moderate-bound}.
The classes meeting $\sigma>\tau$ have $s>\tau/2$.
On such a class $\sigma^{-2}\le s^{-2}$, so the sum of their
bounds is at most a constant times
\[
 \log\#SJ(f)\sum_{\substack{s=2^k\\s>\tau/2}}s^{-1}
 \le\frac{C}{\tau}\log\#SJ(f).
\]
This proves \eqref{eq:aspect-error}.
\end{proof}

\section{Localization and entropy estimates}
\label{sec:localization}

We prove the localization estimates stated in
Subsection~\ref{sec:recursion}, using the notation and parameters
fixed there. We first establish the exact mass identity and use it
to prove entropy dissipation. We then combine the coverage and
localized Gram estimates to prove the energy recursion.

\subsection{Exact mass and entropy dissipation}
\label{subsec:mass-entropy}

\begin{proof}[Proof of Lemma~\ref{lem:exact-mass}]
For notational convenience, we use the language of random
variables and expectations. Let $Z_1,Z_2$ be independent random vectors with probability
density $\phi^2$, and put $\Xi=Z_2-Z_1$. Define
\begin{equation}\label{eq:kappa}
 \kappa_L=\frac12\E\log
       \frac{4-|\Xi|^2/L^2}{1-|\Xi|^2/L^2},
\end{equation}
where $\E$ denotes expectation with respect to the joint law
of $(Z_1,Z_2)$.

The support of $\phi$ gives $|\Xi|\le2$. Since $L>2$, we have
$\log2\le \kappa_L<\infty.$

For $x=y$, a nonzero integrand would require $|z-w|\le 2r < R$, which is incompatible with $(z,w)\in \mathcal A_R$. Thus \eqref{eq:pairwise-exact-mass} holds.

For $x\neq y$, write
$d=|T(y-x)|>0$ and $n=T(y-x)/d$.
The change of variables
\[
 z=Tx+rZ_1,\qquad w=Ty+rZ_2,\qquad t=d/R
\]
turns the integral in \eqref{eq:pairwise-exact-mass} into 
\begin{equation}\label{eq:inner-inte-1}
\int_0^\infty\iint_{\cA_R}
       \Phi_z(x)^2\Phi_w(y)^2\,
       d\gamma_r(z)d\gamma_r(w)\frac{dR}{R}=    \E\int_0^\infty
       \1_{\{1\le|tn+\Xi/L|<2\}}\frac{dt}{t}. 
\end{equation}

For $c=1,2$, the unique positive
root of $|tn+\Xi/L|=c$ is
\[
 t_c(\Xi)=-\frac{\Xi\cdot n}{L}
 +\sqrt{c^2-\frac{|\Xi|^2-(\Xi\cdot n)^2}{L^2}}.
\]
It follows that
\begin{equation}\label{eq:inner-inte-2}
\int_0^\infty
       \1_{\{1\le|tn+\Xi/L|<2\}}\frac{dt}{t}=\log\frac{t_2(\Xi)}{t_1(\Xi)} .   
\end{equation}

Swapping $Z_1,Z_2$ shows that $\Xi$ and $-\Xi$ have the same
distribution. The explicit roots satisfy
\[
 t_c(\Xi)t_c(-\Xi)=c^2-\frac{|\Xi|^2}{L^2}.
\]
Average the logarithmic integral at $\Xi$ and $-\Xi$ to obtain
\[
 \E\log\frac{t_2(\Xi)}{t_1(\Xi)}
 =\frac12\E\log
       \frac{t_2(\Xi)t_2(-\Xi)}{t_1(\Xi)t_1(-\Xi)}
 =\kappa_L.
\]
This, together with \eqref{eq:inner-inte-1} and \eqref{eq:inner-inte-2}, gives \eqref{eq:pairwise-exact-mass}.

Finally, expand $\mu_z\mu_w$ in \eqref{eq:center-mass}, multiply the
pairwise identity \eqref{eq:pairwise-exact-mass} by $f(x)^2f(y)^2$, and sum over $x,y\in \Z^2$ to get
\eqref{eq:exact-mass}.
\end{proof}

The exact mass identity also determines the marginals of the
localization measure, which we now use to prove entropy dissipation.

\begin{proof}[Proof of Lemma~\ref{lem:entropy-dissipation}]
Assume $f\ne0$ and put $\mu=\mu(f)$, $S=\supp f$,
$p(x)=f(x)^2/\mu$. For positive local masses, let
$q_z(x)=f_z(x)^2/\mu_z$, see \eqref{eq:muz-fz} for the definitions of $f_z,\mu_z$. Introduce the measure
\begin{equation}\label{eq:entropy-mixture-measure}
	d\varrho(R,z,w)
	=\frac{\mu_z\mu_w}{\kappa_L\mu^2}
	\1_{\cA_R}\,d\gamma_r(z)d\gamma_r(w)\frac{dR}{R}.
\end{equation}
It records the normalized mass of each separated window pair
and is symmetric in $z,w$. By \eqref{eq:exact-mass}, we have
\begin{equation}\label{eq:mixture-mass}
	\int d\varrho=\frac{J(f)}{\mu^2}
	=1-\sum_{x\in S}p(x)^2.
\end{equation}
For any fixed $x \in S$, expanding $\mu_w$ and applying
\eqref{eq:pairwise-exact-mass} yields
\begin{align}\label{eq:mixture-marginal}
	\int q_z(x)\,d\varrho
	&=\frac{f(x)^2}{\kappa_L\mu^2}\sum_{y\in S}f(y)^2
	\int_0^\infty\iint_{\cA_R}\Phi_z(x)^2\Phi_w(y)^2
	\,d\gamma_r(z)d\gamma_r(w)\frac{dR}{R}\notag\\
	&=p(x)(1-p(x)).
\end{align}

For $\varrho$-almost every $(R,z,w)$, both local masses are
positive. Put $A=\supp q_z$ and $B=\supp q_w$.
Since $|w-z|\ge R>2r$, the supports $A$ and $B$ are disjoint.

Their equal mixture $q_{z,w}=(q_z+q_w)/2$ therefore equals
$q_z/2$ on $A$ and $q_w/2$ on $B$, and vanishes elsewhere.
Since $\sum_{x\in A}q_z(x)=\sum_{x\in B}q_w(x)=1$,
the definition of entropy gives
\begin{align}\label{eq:entrop-identity}
		\operatorname{Ent}(q_{z,w})
		&=\frac12\sum_{x\in A}q_z(x)\log\frac{2}{q_z(x)}
		+\frac12\sum_{x\in B}q_w(x)\log\frac{2}{q_w(x)}\nonumber\\
		&=\log2+\frac12\operatorname{Ent}(q_z)
		+\frac12\operatorname{Ent}(q_w).
\end{align}
By symmetry and \eqref{eq:mixture-marginal},
\begin{equation}\label{eq:entropy-mixture}
	p=\int q_{z,w}\,d\varrho+\sum_{x\in S}p(x)^2\delta_x,
\end{equation}
where $\delta_x$ is the point mass at $x$.
By \eqref{eq:mixture-mass}, the total weight in
\eqref{eq:entropy-mixture} is
\[
\int d\varrho+\sum_{x\in S}p(x)^2=1.
\]
Since all weights are nonnegative and each $q_{z,w}$ and
$\delta_x$ is a probability distribution,
\eqref{eq:entropy-mixture} expresses $p$ as a convex mixture
of probability distributions. Entropy is concave, and
$\operatorname{Ent}(\delta_x)=0$, so the preceding entropy
identity \eqref{eq:entrop-identity} and symmetry in $z,w$ give
\begin{align*}
	\operatorname{Ent}(p)
	&\ge\int\operatorname{Ent}(q_{z,w})\,d\varrho
+\sum_{x\in S}p(x)^2\operatorname{Ent}(\delta_x)=\int\operatorname{Ent}(q_{z,w})\,d\varrho\\
	&=(\log2)\frac{J(f)}{\mu^2}
	+\int\operatorname{Ent}(q_z)\,d\varrho.
\end{align*}
Multiplying by $\mu^2$ and using
$\mu^2\int\operatorname{Ent}(q_z)\,d\varrho
=(\mathscr L\mathscr H)(f)$ proves
\eqref{eq:entropy-dissipation}.

Apply $\mathscr L^j$ to this inequality and sum over
$j=0,\ldots,n-1$. Positivity gives
\[
(\log2)\sum_{j=0}^{n-1}\mathscr L^jJ
\le\mathscr H-\mathscr L^n\mathscr H\le\mathscr H,
\]
which proves \eqref{eq:entropy-telescoping}.
\end{proof}

\subsection{The energy recursion}
\label{subsec:energy-recursion-proof}

To prove Lemma~\ref{lem:energy-recursion}, we first recover thin
interactions by averaging over localization windows, and then bound
the resulting mixed terms by local self-energies.

For a parallelogram with short side $u$, the product
$\Phi_z(x)\Phi_z(x+u)$ records the weight retained when its first
short side is localized at $z$. Localizing the opposite short
side at $w$ and integrating over separated centers gives the
coverage weight
\begin{equation}\label{eq:coverage-kernel}
 \mathcal H_R(x,u,v)
 =\iint_{\cA_R}
 \Phi_z(x)\Phi_z(x+u)\Phi_w(x+v)\Phi_w(x+u+v)
 \,d\gamma_r(z)d\gamma_r(w).
\end{equation}
Only window pairs containing the two respective short sides contribute. For a fixed parallelogram, the integration over these center pairs multiplies its original energy contribution by $ \mathcal H_R(x,u,v)$. Cauchy-Schwarz in each center and $\int \phi^2=1$ give $0\leq \mathcal H_R\leq 1$. The next estimate shows that integration on the long side scale recovers the same mass $\kappa_L$ as in \eqref{eq:pairwise-exact-mass}, up to a quadratic error in the short-to-long side ratio.

\begin{lemma}\label{lem:soft-coverage}
Suppose that $u,v\ne0$ and $s=|Tv|/|Tu|>\tau$. Then
\begin{equation}\label{eq:soft-coverage}
 \int_0^\infty
 \1_{\{|Tu|\le3R/\tau\}}\mathcal H_R(x,u,v)\,\frac{dR}{R}
 \ge\kappa_L-\frac{C_\phi L^2}{s^2}.
\end{equation}
\end{lemma}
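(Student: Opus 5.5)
We compare $\mathcal H_R(x,u,v)$ with the pairwise kernel of Lemma~\ref{lem:exact-mass} applied to the two long-side endpoints. Put $a=Tx$, $b=T(x+v)$, $h=Tu$, and $d=|b-a|=|Tv|$, so that $|h|=d/s$. The four window factors are $\Phi_z$ evaluated at $a$ and $a+h$, and $\Phi_w$ evaluated at $b$ and $b+h$. The key pointwise comparison, applied in each center, is
\[
 \phi(Z)\phi(Z+\delta)\ge\phi(Z)^2-\tfrac12|\delta|\,\bigl|\phi(Z)\nabla\phi(Z)\bigr|+O(\|\phi\|_{C^2}^2|\delta|^2).
\]
We apply it with $\delta=h/r$, where $r=R/L$. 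Rather than using this pointwise, we integrate in $z$ and exploit the evenness of the first-order term. Concretely, we write
\[
 \int\phi\Bigl(\tfrac{a-z}{r}\Bigr)\phi\Bigl(\tfrac{a+h-z}{r}\Bigr)\,d\gamma_r(z)
\]
as a smoothed overlap centered at the midpoint $a+h/2$. By the symmetry $Z\mapsto Z$ shifted by $\pm\delta/2$, the overlap $\int\phi(Z-\delta/2)\phi(Z+\delta/2)\,dZ=1-O(|\delta|^2)$ has no linear term. This is the reason the error is quadratic in $1/s$.

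\textbf{Step 1 (replace by midpoints).} Recenter the windows at the short-side midpoints $a'=a+h/2$ and $b'=b+h/2$. Write $\Phi_z(a)\Phi_z(a+h)=\psi_\delta((a'-z)/r)$ with $\psi_\delta(Z)=\phi(Z-\delta/2)\phi(Z+\delta/2)$. Since $\phi$ is even, $\psi_\delta$ is even in $Z$ and satisfies
\[
 \psi_\delta=\phi^2+O(|\delta|^2\|\phi\|_{C^2}^2),
\]
with support in the unit ball enlarged by $|\delta|$. Then
\[
 \mathcal H_R=\iint_{\cA_R}\psi_\delta\Bigl(\tfrac{a'-z}{r}\Bigr)\psi_\delta\Bigl(\tfrac{b'-w}{r}\Bigr)\,d\gamma_r(z)\,d\gamma_r(w).
\]
We need to compare this with the same integral where $\psi_\delta$ is replaced by $\phi^2$. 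That comparison integral equals the pairwise kernel for the points $a',b'$ at distance $d$. Integrated against $dR/R$ over all $R$, it gives exactly $\kappa_L$ by the computation in the proof of Lemma~\ref{lem:exact-mass}, which uses only $d>0$ and not integrality of the points.

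\textbf{Step 2 (error over the relevant scales).} A nonzero integrand forces $d\sim R$; precisely, $R\in[d/(2+3/L),\,d/(1-3/L)]$, which is an $O(1)$ range in $dR/R$. On this range $|\delta|=L|h|/R\lesssim L/s$. The difference between the $\psi_\delta$-integrand and the $\phi^2$-integrand is bounded pointwise by $C_\phi|\delta|^2$ times bounded integrands supported in $O(1)$-measure sets of $(Z_1,Z_2)$. Hence the total error is at most $C_\phi L^2/s^2$. I would organize this step as follows. Compute $\int\psi_\delta=1-O(|\delta|^2)$ using evenness. Then control $\iint_{\cA_R}$ against the product of the difference $(\psi_\delta-\phi^2)$ with a bounded function by its $L^1$ norm, which is $O(|\delta|^2)$. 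This needs the symmetry argument again, since $\psi_\delta-\phi^2$ has an $O(|\delta|^2)$ sup norm only after the linear term $\tfrac{\delta}{2}\cdot(\phi\nabla\phi-\phi\nabla\phi)$ cancels pointwise. It does: $\phi(Z-\delta/2)\phi(Z+\delta/2)$ is even in $\delta$, so its Taylor expansion in $\delta$ has no linear term pointwise.

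\textbf{Step 3 (cutoff).} The restriction $|Tu|\le 3R/\tau$ removes only scales $R<\tau|h|/3=\tau d/(3s)<d/3$, because $s>\tau$. The integrand already vanishes at such scales, since $d\ge R(1-3/L)$ would fail. So the cutoff costs nothing. Combining the steps, and using $\mathcal H_R\ge0$, yields \eqref{eq:soft-coverage}.

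The main obstacle is Step 2: making sure the error is genuinely quadratic rather than linear in $L/s$. This hinges on recentering at midpoints and on the evenness of $\phi$, which together kill the first-order term pointwise.
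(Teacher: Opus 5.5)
Your proof is correct and follows the paper's route. Both arguments recenter the windows at the short-side midpoints and write each pair of factors as $\phi(Z-\delta/2)\phi(Z+\delta/2)$. Both then get an $O(|\delta|^2)=O(L^2/s^2)$ $L^1$-error against $\phi^2$ on an $O(1)$ range of $dR/R$; you obtain this from evenness in $\delta$, whereas the paper uses $L^2$ first- and second-difference bounds. Both recover $\kappa_L$ from the computation in Lemma~\ref{lem:exact-mass}, and both note that the cutoff $|Tu|\le 3R/\tau$ only removes scales where the integrand already vanishes.

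One small slip: in Step~3, the inequality that fails for $R<d/3$ is $d<R(2+3/L)$, not $d\ge R(1-3/L)$.
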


\begin{proof}
For $h\in\R^2$, set
$\psi_h(Z)=\phi(Z-h)\phi(Z+h)$.
Then $\norm{\psi_h}_{L^1}\le1$ and
\begin{align*}
 \psi_h-\phi^2
 &=\bigl(\phi(\,\cdot-h)-\phi\bigr)
   \bigl(\phi(\,\cdot+h)-\phi\bigr)\\
 &\quad+\phi\bigl(\phi(\,\cdot-h)+\phi(\,\cdot+h)-2\phi\bigr).
\end{align*}
The $L^2$ translation bounds
\[
 \norm{\phi(\,\cdot\pm h)-\phi}_{L^2}
      \le |h|\norm{\nabla\phi}_{L^2},\qquad
 \norm{\phi(\,\cdot-h)+\phi(\,\cdot+h)-2\phi}_{L^2}
      \le |h|^2\norm{D^2\phi}_{L^2} 
\]
therefore imply
\begin{equation}\label{eq:second-order-density}
 \norm{\psi_h-\phi^2}_{L^1}\le C_\phi|h|^2.
\end{equation}
In addition, $\psi_h(Z)$ is supported in
the unit ball, by convexity.

In \eqref{eq:coverage-kernel}, make the change of variables
\[
 z=T(x+u/2)+rZ_1,\qquad
 w=T(x+v+u/2)+rZ_2.
\]
Put
\[
t=\frac{|Tv|}{R},
\qquad
n=\frac{Tv}{|Tv|},
\qquad
h(t)=\frac{Lt}{2|Tv|}Tu=\frac{Tu}{2r}.
\]
Then evenness of $\phi$ gives
\[
\Phi_z(x)\Phi_z(x+u)=\psi_{h(t)}(Z_1),\qquad
\Phi_w(x+v)\Phi_w(x+u+v)=\psi_{h(t)}(Z_2).
\]
Then $\frac{w-z}{R}=tn+(Z_2-Z_1)/L, d\gamma_r(z)d\gamma_r(w)=dZ_1dZ_2.$ Consequently,
\begin{align*}
	 \int_0^\infty\mathcal H_R(x,u,v)\,\frac{dR}{R} =
	\int_0^\infty\iint
	\1_{\{1\le|tn+(Z_2-Z_1)/L|<2\}}
	\psi_{h(t)}(Z_1)\psi_{h(t)}(Z_2)
	\,dZ_1dZ_2\,\frac{dt}{t}.
\end{align*}
Replacing both factors $\psi_{h(t)}$ by $\phi^2$ gives
$\kappa_L$, by Lemma~\ref{lem:exact-mass}. Since both densities are supported in the unit ball and the annulus condition requires
\(1\le \left|tn+(Z_2-Z_1)/L\right|<2\),
either integrand can be nonzero only when
\[
t\in I_L:=\left[1-\frac{2}{L},\,2+\frac{2}{L}\right].
\]
On this interval, $|h(t)|=Lt/(2s)\le2L/s$.
Using $\|\psi_{h(t)}\|_{L^1},\|\phi^2\|_{L^1}\le1$ and
\eqref{eq:second-order-density}, we obtain
\begin{align*}
	\left|
	\int_0^\infty\mathcal H_R(x,u,v)\,\frac{dR}{R}
	-\kappa_L
	\right|	\le2\int_{I_L}\|\psi_{h(t)}-\phi^2\|_{L^1}\,\frac{dt}{t}\le C_\phi\frac{L^2}{s^2}.
\end{align*}
Finally, on the support of $\mathcal H_R(x,u,v)$,
\[
\frac{|Tu|}{R}=\frac ts
\le\frac{2+2/L}{s}<\frac3\tau.
\] 
Thus the short-side cutoff leaves the integral unchanged,
proving \eqref{eq:soft-coverage}.
\end{proof}

We now sum the parallelogram energy with its coverage weight,
\begin{equation}\label{eq:localized-energy}
 Q_R(f)=
 \sum_{\substack{x,u,v\in\Z^2\\0<|Tu|\le3R/\tau}}
 f(x)f(x+u)f(x+v)f(x+u+v)
 K(B(u,v))\mathcal H_R(x,u,v).
\end{equation}
Equivalently, $Q_R$ integrates the mixed interaction with $f_z$
at $x,x+u$ and $f_w$ at $x+v,x+u+v$ over $\cA_R$.
The cutoff $|Tu|\le3R/\tau$ retains every covered thin
parallelogram by Lemma~\ref{lem:soft-coverage} and, since it depends only on $u$, preserves
the positive kernel in the base points. The next lemma bounds $Q_R$ by local self-energies.

\begin{lemma}\label{lem:localized-gram}
For every $R>0$,
\begin{equation}\label{eq:localized-gram}
 2Q_R(f)\le\iint_{\cA_R}
       \frac{\mu_w}{\mu_z}\bigl(\cE_\alpha(f_z)-\nu_z\bigr)
       \,d\gamma_r(z)d\gamma_r(w),
\end{equation}
where the integrand is defined to be zero if $\mu_z\mu_w=0$.
\end{lemma}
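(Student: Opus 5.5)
The plan is to write $Q_R(f)$ as an integral over $\cA_R$ of mixed interactions and apply the positive-kernel Cauchy--Schwarz inequality in the base points, as in Lemma~\ref{lem:graph-holder}. Expanding $\mathcal H_R$ from \eqref{eq:coverage-kernel} inside \eqref{eq:localized-energy} gives
\[
 Q_R(f)=\iint_{\cA_R}M(z,w)\,d\gamma_r(z)d\gamma_r(w),\qquad
 M(z,w)=\sum_{0<|Tu|\le3R/\tau}\bigl\langle F_u^{z},F_u^{w}\bigr\rangle_{K_u},
\]
where $F_u^z(x)=f_z(x)f_z(x+u)$ and $K_u(h)=K(B(u,h))$. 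Here $v=y-x$ runs over all of $\Z^2$, including $v=0$; those terms are nonnegative and only enlarge the right-hand side, so the inequality $Q_R\le\iint M$ is all that is needed. Since the cutoff depends only on $u$, each $K_u$ is positive definite by \eqref{eq:positive-kernel-representation}. Cauchy--Schwarz for $\langle\cdot,\cdot\rangle_{K_u}$, followed by Cauchy--Schwarz in $u$, gives $M(z,w)\le (A_zA_w)^{1/2}$, where
\[
A_z=\sum_{0<|Tu|\le3R/\tau}\langle F_u^z,F_u^z\rangle_{K_u}.
\]

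Next I balance the two factors by their masses. Whenever $\mu_z\mu_w>0$, the weighted AM--GM inequality gives
\[
(A_zA_w)^{1/2}\le\frac12\Bigl(\frac{\mu_w}{\mu_z}A_z+\frac{\mu_z}{\mu_w}A_w\Bigr).
\]
If $\mu_z\mu_w=0$, then $M(z,w)=0$. The annulus $\cA_R$ and the measure $d\gamma_r\,d\gamma_r$ are symmetric under $z\leftrightarrow w$, so integrating yields
\[
Q_R(f)\le\iint_{\cA_R}\frac{\mu_w}{\mu_z}A_z\,d\gamma_r(z)d\gamma_r(w).
\]
It therefore suffices to prove the pointwise local bound $2A_z\le\cE_\alpha(f_z)-\nu_z$.

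To get this bound I expand $A_z$, exactly as in \eqref{eq:graph-collision}. The $v=0$ terms contribute exactly $\sum_{u\ne0}\sum_xf_z(x)^2f_z(x+u)^2\le J(f_z)$. The remaining terms give $\cE^{\rm sh}(f_z)$, the part of $\cE_\alpha^*(f_z)$ with $|Tu|\le3R/\tau$. On the other side, \eqref{eq:zero-edges} gives $\cE_\alpha(f_z)-\nu_z=\cE_\alpha^*(f_z)+2J(f_z)$. Hence it is enough to show $2\cE^{\rm sh}(f_z)\le\cE_\alpha^*(f_z)$. The swap $(x,u,v)\mapsto(x,v,u)$ preserves the summand, and $B$ is symmetric. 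Writing $\cE_\alpha^*(f_z)=E_{u}+E_v+E_{uv}+E_0$ according to which of $|Tu|,|Tv|$ is at most $3R/\tau$, we get $E_u=E_v$. So the required inequality reduces to $E_{uv}\le E_0$, and in particular holds if $E_{uv}=0$.

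The main obstacle is this last ``angular exclusion'' step: controlling the nondegenerate nearly orthogonal parallelograms inside the window $\supp f_z$ with both sides of length at most $3R/\tau\le 3R/(100L)$. I would first try to show that this class is empty after using the freedom in the definition of $Q_R$. One option is to keep only $u$ with $|Tu|$ comparable to the coverage scale. Another is to note that the terms of $Q_R$ carry a long side $v$ with $|Tv|\approx R$, so the short-side cutoff can be moved into the Gram form without retaining the both-short terms. If emptiness fails, the fallback is to count each such parallelogram by its eight labelings $(x,\pm u,\pm v)$ and their swaps. These are all counted in $\cE_\alpha^*(f_z)$, while only half of the labelings with $u$ short can be charged to $\cE^{\rm sh}$. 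This bookkeeping is where the exact factor $2$, rather than a larger constant, has to be secured.
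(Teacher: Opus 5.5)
Your first half is sound and matches the paper's Step~2: writing $Q_R=\iint_{\cA_R}M\,d\gamma_r\,d\gamma_r$, applying Cauchy--Schwarz in each positive kernel $K_u$, and using the mass-weighted AM--GM inequality with $z\leftrightarrow w$ symmetry. The gap is the reduction to $2A_z\le\cE_\alpha(f_z)-\nu_z$, where $A_z$ is cut off only by $|Tu|\le3R/\tau$. This bound is false, not merely unproven.

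Let $f_z$ be supported on $\{0,e_1,e_2,e_1+e_2\}$ with positive weights $a_1,\dots,a_4$, and let $3R/\tau\ge\sqrt{1+\alpha}$. Degenerate configurations have $|B|\ge\lambda^2>\eta$, so only the square contributes and $\cE_\alpha^*(f_z)=8a_1a_2a_3a_4$. Every nonzero difference is short, so $A_z=\cE_\alpha^*(f_z)+J(f_z)$, and therefore
\[
 2A_z-\bigl(\cE_\alpha(f_z)-\nu_z\bigr)=8a_1a_2a_3a_4>0 .
\]
Now let $f$ be this square together with its translate by $(0,N)$, and take $R$ comparable to $\sqrt\alpha N$. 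Then $Q_R(f)>0$, while the excess is nonnegative for every window and positive on a positive-measure set of pairs $(z,w)\in\cA_R$ with $\mu_w>0$. So your chain cannot yield \eqref{eq:localized-gram}.

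Your fallback bookkeeping does not rescue this. For a parallelogram with both sides short, every labeling has $u$ short, so $\cE^{\rm sh}$ counts it as often as $\cE_\alpha^*$ does, and $2\cE^{\rm sh}$ overcounts by a factor $2$. A cutoff depending only on $|Tu|$ cannot help either, since squares at that scale survive it. Tying $|Tu|$ to $R$ would also break the coverage in Lemma~\ref{lem:soft-coverage}, where $s=|Tv|/|Tu|$ is unbounded.

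The missing idea is a cutoff on the \emph{direction} of $u$ that depends on the center pair. Assume $R\ge\lambda\tau/3$, since otherwise $Q_R=0$. In a nonzero mixed term, $Tx$ lies within $r$ of $z$ and $T(x+v)$ lies within $r$ of $w$, so $|Tv-(w-z)|\le2r$. Together with $|B(u,v)|<\eta$ this gives
\[
 \Bigl|\frac{Tu}{|Tu|}\cdot\omega\Bigr|\le\frac{\eta}{\lambda R}+\frac2L\le\frac4L,
 \qquad \omega=\frac{w-z}{|w-z|}.
\]
So you may insert, at no cost, $\chi_{z,w}(u)=\1_{\{0<|Tu|\le3R/\tau\}}\1_{\Gamma_{z,w}}(u)$, where $\Gamma_{z,w}$ is this cone of directions nearly perpendicular to $\omega$. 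Because $\chi_{z,w}$ depends only on $u$, the positive-kernel Cauchy--Schwarz survives. Because $\chi_{z,w}=\chi_{w,z}$, your AM--GM symmetrization goes through with $A_{z,w}=\sum_u\chi_{z,w}(u)\langle F_u^z,F_u^z\rangle_{K_u}$ in place of $A_z$.

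Two unit vectors whose $\omega$-components are at most $4/L$ in absolute value have inner product at least $1-32/L^2$ in absolute value. On the other hand, $K(B(u,h))>0$ forces $\bigl|\frac{Tu}{|Tu|}\cdot\frac{Th}{|Th|}\bigr|\le\eta/\lambda^2\le10^{-6}$. For $L\ge128$ these are incompatible, so $\chi_{z,w}(u)+\chi_{z,w}(h)\le1$ whenever $K(B(u,h))>0$. Symmetrizing the $h\ne0$ part of $A_{z,w}$ under $u\leftrightarrow h$ then gives
\[
 A_{z,w}\le\tfrac12\cE_\alpha^*(f_z)+J(f_z)=\tfrac12\bigl(\cE_\alpha(f_z)-\nu_z\bigr).
\]
This is exactly the factor $2$ you were trying to secure, and it is the content of the paper's Steps~1 and~3.
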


\begin{proof}
If $R<\lambda\tau/3$, the sum defining $Q_R$ is empty and the claim follows from
$\cE_\alpha(f_z)-\nu_z=\cE_\alpha^*(f_z)+2J(f_z)\ge0$ by
\eqref{eq:zero-edges}.
Assume from now on that $R\ge\lambda\tau/3$.

\emph{Step 1. Angular restriction.}
For $(z,w)\in\cA_R$, write $\omega=(w-z)/|w-z|$ and define
\[
 \Gamma_{z,w}
 =\left\{u\in\Z^2\setminus\{0\}:
        \left|\frac{Tu}{|Tu|}\cdot\omega\right|\le\frac4L\right\}.
\]
We state two facts on $\Gamma_{z,w}$. First, the short-side vector $u$ of every nonzero mixed term
lies in $\Gamma_{z,w}$. Indeed,
for a positive summand in the integrand defining $Q_R(f)$,
$Tx$ is within $r$ of $z$ and $T(x+v)$ is within $r$ of $w$.
Hence $|Tv-(w-z)|\le2r$, and $K(B(u,v))>0$ implies
\begin{equation}\label{eq:gram-angular-bound}
 \left|\frac{Tu}{|Tu|}\cdot\omega\right|
 \le\frac{\eta}{\lambda R}+\frac2L
 \le\frac{3\eta}{\lambda^2\tau}+\frac2L
 \le\frac4L.
\end{equation}

Second, two nonzero vectors \(u\) and \(h\) with \(K(B(u,h))>0\)
cannot both belong to \(\Gamma_{z,w}\).
To see this, put $\delta=4/L$ and resolve the
two unit vectors along $\omega$ and $\omega^\perp$.
Their $\omega$ components have absolute value at most $\delta$,
while their perpendicular components have absolute value at
least $\sqrt{1-\delta^2}$. It follows that
\[
 \left|\frac{Tu}{|Tu|}\cdot\frac{Th}{|Th|}\right|
 \ge(1-\delta^2)-\delta^2=1-\frac{32}{L^2}.
\]
On the other hand, positivity of $K(B(u,h))$ gives an upper
bound $\eta/\lambda^2\le10^{-6}$ for the same quantity.
These bounds are incompatible for $L\ge128$, see \eqref{eq:localization-parameters}.

\emph{Step 2. Positive kernel estimate.}
For each fixed $(z,w)\in\cA_R$, set
\[
 \chi_{z,w}(u)=
 \1_{\{0<|Tu|\le3R/\tau\}}\1_{\Gamma_{z,w}}(u),
 \qquad
 F_{z,u}(x)=f_z(x)f_z(x+u),
\]
\[
 G_z(u)=\sum_{x,y}F_{z,u}(x)F_{z,u}(y)K(B(u,y-x)),
 \qquad
 S_{z,w}=\sum_{u\ne0}\chi_{z,w}(u)G_z(u).
\]

For $\mu_z\mu_w>0$, the kernel representation
\eqref{eq:positive-kernel-representation} and the weighted
arithmetic--geometric mean inequality give
\begin{equation}\label{eq:gram-mixed-bound}
 \sum_{x,y}F_{z,u}(x)F_{w,u}(y)K(B(u,y-x))
 \le\sqrt{G_z(u)G_w(u)}
 \le\frac12\left(\frac{\mu_w}{\mu_z}G_z(u)
                  +\frac{\mu_z}{\mu_w}G_w(u)\right).
\end{equation}
By the angular restriction \eqref{eq:gram-angular-bound},
we may insert $\chi_{z,w}(u)$ into the sum defining $Q_R(f)$.
Changing variables from $v$ to $y=x+v$ in that sum and
then applying \eqref{eq:gram-mixed-bound}, we obtain
\begin{align}
 Q_R(f)
 &=\iint_{\cA_R}\sum_{u\ne0}\chi_{z,w}(u)
   \sum_{x,y}F_{z,u}(x)F_{w,u}(y)K(B(u,y-x))
   \,d\gamma_r(z)d\gamma_r(w)\notag\\
 &\le\frac12
   \iint_{\substack{(z,w)\in\cA_R\\\mu_z\mu_w>0}}
   \left(\frac{\mu_w}{\mu_z}S_{z,w}
        +\frac{\mu_z}{\mu_w}S_{w,z}\right)
   \,d\gamma_r(z)d\gamma_r(w)\notag\\
 &=\iint_{\substack{(z,w)\in\cA_R\\\mu_z\mu_w>0}}
   \frac{\mu_w}{\mu_z}S_{z,w}
   \,d\gamma_r(z)d\gamma_r(w).
 \label{eq:gram-after-cauchy}
\end{align}
The last equality uses $\chi_{z,w}=\chi_{w,z}$ and symmetry under $z\leftrightarrow w$.

\emph{Step 3. Exchange of the two side vectors.}
The angular exclusion above (see Step~1) gives
\begin{equation}\label{eq:cone-budget}
 \chi_{z,w}(u)+\chi_{z,w}(h)\le1
 \quad\text{if }u,h\ne0\text{ and }K(B(u,h))>0.
\end{equation}
With $h=y-x$, the identity
\[
 F_{z,u}(x)F_{z,u}(x+h)
 =F_{z,h}(x)F_{z,h}(x+u)
\]
allows us to average over $u$ and $h$ when both are nonzero.
Separating $h=0$ and using $K(0)=1$, we obtain
\begin{align}
 S_{z,w}
 &=\frac12\sum_{\substack{x,u,h\\u,h\ne0}}
   \bigl(\chi_{z,w}(u)+\chi_{z,w}(h)\bigr)
   F_{z,u}(x)F_{z,u}(x+h)K(B(u,h))\notag\\
 &\quad+\sum_{x,u\ne0}
   \chi_{z,w}(u)f_z(x)^2f_z(x+u)^2\notag\\
 &\le\frac12\cE_\alpha^*(f_z)+J(f_z)
 =\frac12\bigl(\cE_\alpha(f_z)-\nu_z\bigr).
 \label{eq:gram-local-bound}
\end{align}
The inequality follows from \eqref{eq:cone-budget} and
$0\le\chi_{z,w}\le1$.
Substituting \eqref{eq:gram-local-bound} into
\eqref{eq:gram-after-cauchy} proves
\eqref{eq:localized-gram}.
\end{proof}

We now combine these estimates with Corollary~\ref{cor:aspect-sums}.

\begin{proof}[Proof of Lemma~\ref{lem:energy-recursion}]
Put $m=\log\#S$ and define
\[
 W_f(x,u,v)
:=f(x)f(x+u)f(x+v)f(x+u+v)K(B(u,v)).
\]
Set
\begin{align*}
 \cE_{\rm mod}(f)
:=\sum_{\substack{x,u,v\in\Z^2\\u,v\ne0\\
\sigma(u,v)\le\tau}}W_f(x,u,v), \qquad
 \cE_{\rm thin}(f)
:=\sum_{\substack{x,u,v\in\Z^2\\u,v\ne0\\
\sigma(u,v)>\tau}}W_f(x,u,v).
\end{align*}
Then $\cE_\alpha^*(f)=\cE_{\rm mod}(f)+\cE_{\rm thin}(f)$,
and Corollary~\ref{cor:aspect-sums} gives
\begin{align}\label{eq:recur-E-mod-bound}
\cE_{\rm mod}(f)\le C_I\tau mJ(f).    
\end{align}

Since $\tau>1$ and $W_f(x,u,v)=W_f(x,v,u)$,
\begin{align}\label{eq:recur-E-thin-rewrite}
 \cE_{\rm thin}(f)
=2\sum_{\substack{x,u,v\in\Z^2\\u,v\ne0\\
|Tv|>\tau|Tu|}}W_f(x,u,v).
\end{align}
For $u,v\ne0$ with $s=|Tv|/|Tu|>\tau$,
Lemma~\ref{lem:soft-coverage} yields
\begin{align}\label{eq:recur-coerage}
 \kappa_L W_f(x,u,v)
 \le\int_0^\infty
   \1_{\{|Tu|\le3R/\tau\}}W_f(x,u,v)\mathcal H_R(x,u,v)
   \,\frac{dR}{R}
   +\frac{C_\phi L^2}{s^2}W_f(x,u,v).
\end{align}
At each scale $R$, \eqref{eq:localized-energy} implies
\begin{align}\label{eq:recur-scale}
\sum_{\substack{x,u,v\in\Z^2\\u,v\ne0\\
|Tv|>\tau|Tu|}}
 \1_{\{|Tu|\le3R/\tau\}}
 W_f(x,u,v)\mathcal H_R(x,u,v)
 \le Q_R(f).
\end{align}
Summing the coverage estimate \eqref{eq:recur-coerage}, using
\eqref{eq:recur-E-thin-rewrite} and \eqref{eq:recur-scale}, 
we obtain
\begin{align}\label{eq:E-thin-1}
 \kappa_L\cE_{\rm thin}(f)
 \le2\int_0^\infty Q_R(f)\,\frac{dR}{R}+C_\phi L^2
\sum_{\substack{x,u,v\in\Z^2\\u,v\ne0\\\sigma(u,v)>\tau}}\frac{W_f(x,u,v)}{\sigma(u,v)^2}.
\end{align}
By \eqref{eq:aspect-error}, the last term in \eqref{eq:E-thin-1} is bounded by
$C_{I,\phi}L^2\tau^{-1}mJ(f)$.

Integrating Lemma~\ref{lem:localized-gram} and using
\eqref{eq:entropy-localization}, we have
\begin{align}\label{eq:E-thin-2}
\frac2{\kappa_L}\int_0^\infty Q_R(f)\,\frac{dR}{R}
&\le\frac1{\kappa_L}\int_0^\infty\iint_{\cA_R}
       \frac{\mu_w}{\mu_z}
    \bigl(\cE_\alpha(f_z)-\nu_z\bigr)
\,d\gamma_r(z)d\gamma_r(w)\frac{dR}{R}\nonumber\\
 &=\bigl(\mathscr L(\cE_\alpha-\nu)\bigr)(f).
\end{align}
Dividing \eqref{eq:E-thin-1} by $\kappa_L$,
applying the bound \eqref{eq:E-thin-2}, and adding the estimate for
$\cE_{\rm mod}(f)$ gives, since $\kappa_L\ge\log2$,
\[
 \cE_\alpha^*(f)
 \le\bigl(\mathscr L(\cE_\alpha-\nu)\bigr)(f)
+C_{I,\phi}\left(\tau+\frac{L^2}{\tau}\right)mJ(f).
\]
The parameters $\phi,L,\tau$ are fixed, so the error is bounded
by $C_ImJ(f)$. 

Using
$\cE_\alpha-\nu=G+J$, $G=\cE_\alpha^*+J$, and
\eqref{eq:operator-pair-mass}, we conclude that
\begin{align*}
 G(f)
 &\le(\mathscr LG)(f)+(\mathscr LJ)(f)+J(f)+C_ImJ(f)\\
 &\le(\mathscr LG)(f)+(2+C_Im)J(f).
\end{align*}
Since $m\ge\log3$, enlarging $C_I$ proves
\eqref{eq:entropy-energy-recursion}.
\end{proof}

\end{document}